%% file: main-arxiv.tex
\documentclass[journal]{IEEEtran}

\input{latex/packages}
\input{latex/tikz_utils}
\input{latex/algorithm}

\input{latex/theorems}
\input{latex/commands}
\input{latex/glossary}

\pgfplotsset{compat=1.18}

\begin{document}
\title{Compositional Sensitivity Analysis for Multiobjective System Co-Design}

\author{
Alan~Bonomi$^{1,2}$,
Antonio~Terpin$^{2}$,
Yubo~Cai$^{1}$,
Yujun~Huang$^{1}$,
Meshal~Alharbi$^{1}$,
Gioele~Zardini$^{1}$%
\thanks{$^{1}$Laboratory for Information \& Decision Systems (LIDS), Massachusetts Institute of Technology, Cambridge, MA, USA. Emails: {\tt \{abonomi,yubocai,yujun233,meshal,gzardini\}@mit.edu}}%
\thanks{$^{2}$Institute for Dynamic Systems and Control (IDSC), ETH Zurich, Zurich, Switzerland. Email: {\tt aterpin@ethz.ch}.}
\thanks{This material is based on research supported in part by the Defense Advanced Research Projects Agency (DARPA) under Award D25AC00373, and in part by the Air Force Office of Scientific Research (AFOSR) under Grant Agreement FA9550261B038 (the Center of Excellence on Compositional Optimization, Dynamical Systems and Control).}
}

\maketitle

\begin{abstract}
\input{sections/abstract}
\end{abstract}

\begin{IEEEkeywords}
Co-design, sensitivity analysis, multiobjective optimization, set-valued analysis, variational analysis, compositional methods, system design.
\end{IEEEkeywords}

\input{sections/introduction}

\input{sections/problem_and_main_results}

\input{sections/preliminaries}

\input{sections/tangential_design_problems_and_queries}

\input{sections/compositional_sensitivity_in_codesign}

\input{sections/local_computation_of_system_level_sensitivity}

\input{sections/finite_variations}

\input{sections/numerical_example}

\input{sections/conclusion}

\bibliographystyle{IEEEtran}
\bibliography{bib}

\appendices

\crefalias{section}{appendix}
\renewcommand{\theoremsection}{\Alph{section}}

\input{sections/appendix_tangential_composition_reductions}

\input{sections/appendix_finite_constraint_design_problems}

\input{sections/appendix_beyond_convexity}

\end{document}

%% file: latex/packages.tex
\usepackage{amsmath}
\usepackage{amssymb}
\usepackage{amsthm}
\usepackage{cite}

\usepackage{xcolor}
\usepackage{graphicx}
\usepackage{subcaption}
\usepackage{tikz}
\usepackage{ifthen}
\usepackage{mathtools}
\usepackage{pgfplotstable}
\usepackage[acronym]{glossaries}
\usepackage{comment}
\usepackage[hidelinks, citecolor=blue, urlcolor=cyan]{hyperref}
\usepackage[capitalise]{cleveref}

%% file: latex/tikz_utils.tex
\definecolor{dpgreen}{rgb}{0.094869,0.500000,0.000000}
\definecolor{dpred}{rgb}{0.555789,0.000000,0.000000}
\definecolor{dpyellow}{RGB}{214,120,5}

\usetikzlibrary{positioning,intersections, hobby, patterns, calc, decorations.pathmorphing, decorations.markings, shadows,shapes, cd,arrows.meta, fit,quotes}

\tikzset{
   tick/.style={postaction={
      decorate,
      decoration={markings, mark=at position 0.5 with {\draw[-] (0,.4ex) -- (0,-.4ex);}}}
   }
}
\tikzstyle{block} = [draw, rectangle, minimum height=2em, minimum width=3em,font=\bfseries,rounded corners,thick]
\tikzstyle{block} = [draw, rectangle, minimum height=2em, minimum width=3em]
\tikzstyle{block1} = [draw, rectangle, minimum height=1.5em, minimum width=2.5em]
\tikzstyle{blockDyn} = [draw, rectangle, minimum height=2.5em, minimum width=3.5em, align=center, inner sep=10pt, thick, fill=white, copy shadow={draw=black,fill=black,opacity=1,shadow xshift=0.5ex,shadow yshift=-0.5ex}]
\tikzstyle{blockAlg} = [draw, rectangle, minimum height=1.5em, minimum width=2.5em, align=center, inner sep=10pt, thick]
\tikzstyle{sum} = [draw,circle]

\tikzstyle{nodePre} = [circle, draw,inner sep=1pt,node contents={$\preceq$},thick]
\tikzstyle{nodePreEmpty} = [circle, draw,inner sep=1pt,thick]
\tikzstyle{nodePos} = [circle, draw,inner sep=1pt,node contents={$\posceq$},thick]
\tikzstyle{nodeProd} = [rectangle, draw,inner sep=4pt,node contents={$\times$},rounded corners,thick]
\tikzstyle{nodeSum} = [rectangle, draw,inner sep=4pt,node contents={$\mathbf{+}$},rounded corners,thick]

\definecolor{red}{rgb}{0.75, 0.0, 0.0}

\tikzset{fcname/.store in =\fcname, fcname={}}
\tikzset{funame/.store in =\funame, funame={}}
\tikzset{rcname/.store in =\rcname, rcname={}}
\tikzset{runame/.store in =\runame, runame={}}
\tikzset{whereres/.store in =\whereres, whereres=0.5}
\tikzset{wherefun/.store in =\wherefun, wherefun=0.5}
\tikzset{relres/.store in =\relres, relres={above}}
\tikzset{relfun/.store in =\relfun, relfun={above}}
\tikzset{posres/.store in =\posres, posres=1}
\tikzset{posfun/.store in =\posfun, posfun=1}
\tikzset{loos/.store in =\loos, loos=2}
\tikzset{feedback/.store in =\feedback, feedback=0}
\tikzset{
   DP/.style={%
      label/.style={
         font=\everymath\expandafter{\the\everymath\scriptstyle},
         inner sep=5pt,
         node distance=2pt and -2pt},
      semithick,
      node distance=1 and 1,
      rconn/.style={color=white,opacity=0.0,postaction={decorate}, shorten <=3.2pt, shorten >= 0.8,
      decoration={markings, 
      mark= at position 0 with {
               \coordinate (a);
      },
      mark=at position .5 with
      {
              \ifthenelse{\equal{\feedback}{1}}{\def\angleOut{90}\def\angleIn{90}}{\def\angleOut{0}\def\angleIn{180}}    
              \coordinate (b);
              \draw[dashed,dpred,opacity=1.0] (a) to[out=\angleOut,in=\angleIn,looseness=\loos] 
              node[pos=\posres,\relres=\whereres mm,dpred,opacity=1,fill=white,inner sep=1pt,outer sep=1pt]{\footnotesize{\rcname}} (b);
      },
      mark= at position 1 with 
      {
             \ifthenelse{\equal{\feedback}{1}}{\def\angleOut{0}\def\angleIn{0}}{\def\angleOut{180}\def\angleIn{0}} 
              \ifthenelse{\equal{\feedback}{1}}{\def\symbol{\succeq}}{\def\symbol{\preceq}} 
              \coordinate (c);
              \draw[dpgreen,opacity=1.0] (c) to[out=\angleOut,in=\angleIn,looseness=\loos]
              node[pos=\posfun,\relfun=\wherefun mm,dpgreen,opacity=1,fill=white,inner sep=1pt,outer sep=1pt]{\footnotesize{\fcname}} (b){};
              \node[draw,circle,inner sep=0.5pt,color=black,fill=white,opacity=1.0] at (b) (nodepreceq) {$\symbol$}; 
      }
      }},
      runconn/.style={color=dpred,dashed,postaction={decorate},
      decoration={markings,
      mark= at position 1 with {
              \coordinate (a);
              \draw[dpred,opacity=1.0,dashed] ($(a)+(0.05,0)$) --++ (0.5,0) node[\relres,pos=\posres]{\footnotesize{\runame}};}
      }
      },
      funconn/.style={color=white,postaction={decorate},
      decoration={markings,
      mark= at position 0 with {
      \coordinate (a);
      \draw[dpgreen] ($(a)+(-0.05,0)$) -- ($(a)+(-0.5,0)$) node[\relfun, pos=\posfun]{\footnotesize{\funame}};}
      }
      },
      execute at begin picture={\tikzset{
         x=\dpx, y=\dpy,
         every fit/.style={inner xsep=\dpx, inner ysep=\dpy}}}
      },
   dpx/.store in=\dpx,
   dpx = 1.5cm,
   dpy/.store in=\dpy,
   dpy = 1.5ex,
   dp port sep/.store in=\dpportsep,
   dp port sep=2,
   dp port length/.store in=\dpportlen,
   dp port length=4pt,
   dp min width/.store in=\dpminwidth,
   dp min width=0.5cm,
   dp rounded corners/.store in=\dpcorners,
   dp rounded corners=2pt,
   dp small/.style={dp port sep=1, dp port length=2.5pt, dpx=.4cm, dp min width=.4cm, dpy=.7ex},
   dp/.code 2 args={%
      \pgfmathsetlengthmacro{\dpheight}{\dpportsep * (max(#1,#2)) * \dpy}
      \pgfkeysalso{draw,%
        minimum width=\dpminwidth,%
        minimum height=\dpheight,%
        font=\bfseries,
        outer sep=0pt,%
        inner sep=5pt,%
        rounded corners=\dpcorners,
        thick,
        prefix after command={\pgfextra{\let\fixname\tikzlastnode}},
        append after command={\pgfextra{\draw
            \ifnum #1=0{} \else foreach \i in {1,...,#1} { 
            ($(\fixname.north west)!{\i/(#1+1)}!(\fixname.south west)$) +(0,0) node[solid,left,circle,color=dpgreen,draw,fill=dpgreen,scale=0.3] {} coordinate (\fixname_fun\i) -- +(0,0) coordinate (\fixname_fun\i')}\fi
            \ifnum #2=0{} \else foreach \i in {1,...,#2} {
            ($(\fixname.north east)!{\i/(#2+1)}!(\fixname.south east)$) +(0,0) coordinate (\fixname_res\i') -- +(0,0) node[solid,right,circle,color=dpred,draw,fill=dpred,scale=0.3] {} coordinate (\fixname_res\i)}\fi;
         }}}
         },
      dp name/.style={append after command={\pgfextra{\node[label=center,inner sep=2pt,fill=white] at (\fixname) {\textbf{#1}};}}}
   }

%% file: latex/algorithm.tex
\newcounter{myalgorithm}

%% file: latex/theorems.tex
\newcommand{\theoremsection}{\arabic{section}}

\newtheorem{theorem}{Theorem}[section]

\newtheorem{lemma}[theorem]{Lemma}
\newtheorem{proposition}[theorem]{Proposition}
\newtheorem{corollary}[theorem]{Corollary}

\theoremstyle{definition}
\newtheorem{assumption}[theorem]{Assumption}
\newtheorem{example}[theorem]{Example}

\newtheorem{definition}[theorem]{Definition}

\theoremstyle{remark}
\newtheorem{remark}[theorem]{Remark}

\crefname{theorem}{Theorem}{Theorems}
\Crefname{theorem}{Theorem}{Theorems}
\crefname{lemma}{Lemma}{Lemmas}
\Crefname{lemma}{Lemma}{Lemmas}
\crefname{proposition}{Proposition}{Propositions}
\Crefname{proposition}{Proposition}{Propositions}
\crefname{corollary}{Corollary}{Corollaries}
\Crefname{corollary}{Corollary}{Corollaries}
\crefname{assumption}{Assumption}{Assumptions}
\Crefname{assumption}{Assumption}{Assumptions}
\crefname{example}{Example}{Examples}
\Crefname{example}{Example}{Examples}
\crefname{problem}{Problem}{Problems}
\Crefname{problem}{Problem}{Problems}
\crefname{definition}{Definition}{Definitions}
\Crefname{definition}{Definition}{Definitions}
\crefname{remark}{Remark}{Remarks}
\Crefname{remark}{Remark}{Remarks}

%% file: latex/commands.tex
\NewDocumentCommand{\meshal}{s +m}{%
  \begingroup%
  \color{magenta}\IfBooleanTF{#1}{#2}{Meshal: #2}%
  \endgroup%
}
\NewDocumentCommand{\yujun}{s +m}{
  \begingroup%
  \color{brown}\IfBooleanTF{#1}{#2}{Yujun: #2}%
  \endgroup%
}
\NewDocumentCommand{\alan}{s +m}{
  \begingroup%
  \color{blue}\IfBooleanTF{#1}{#2}{Alan: #2}%
  \endgroup%
}

\newcommand{\Set}{C}

\newcommand{\Graph}{\operatorname{gph}}
\newcommand{\Tan}{T}
\newcommand{\OpPoint}[1]{\tilde{#1}}
\newcommand{\PerturbationScale}{h}

\newcommand{\Ball}{\mathbb{B}}
\newcommand{\Closure}[1]{\operatorname{cl}(#1)}

\newcommand{\GenericFunctionalitySpace}{\mathcal F}
\newcommand{\GenericParameterSpace}{\Theta}
\newcommand{\GenericResourceSpace}{\mathcal R}
\newcommand{\GenericIntermediateSpace}{\mathcal Q}

\NewDocumentCommand{\Func}{e{_^}}{%
  [\underline{f\IfValueT{#1}{_{#1}}},\overline{f\IfValueT{#1}{_{#1}}}]%
  \IfValueT{#2}{^{#2}}}
\NewDocumentCommand{\Res}{e{_^}}{%
  [\underline{r\IfValueT{#1}{_{#1}}},\overline{r\IfValueT{#1}{_{#1}}}]%
  \IfValueT{#2}{^{#2}}}
\NewDocumentCommand{\ThetaSpace}{e{_^}}{%
  [\underline{\theta\IfValueT{#1}{_{#1}}},\overline{\theta\IfValueT{#1}{_{#1}}}]%
  \IfValueT{#2}{^{#2}}}
\NewDocumentCommand{\IntermediateSpace}{e{_^}}{%
  [\underline{q\IfValueT{#1}{_{#1}}},\overline{q\IfValueT{#1}{_{#1}}}]%
  \IfValueT{#2}{^{#2}}}

\newcommand{\FuncDim}{n_f}
\newcommand{\ThetaDim}{n_\theta}
\newcommand{\ResDim}{n_r}
\newcommand{\ConstraintDim}{n_g}

\NewDocumentCommand{\FuncAmbientSpace}{e{_^}}{%
  \IfValueTF{#2}
    {\bigl(\mathbb R^{n_{f\IfValueT{#1}{_{#1}}}}\bigr)^{#2}}
    {\mathbb R^{n_{f\IfValueT{#1}{_{#1}}}}}%
}
\NewDocumentCommand{\ThetaAmbientSpace}{e{_^}}{%
  \IfValueTF{#2}
    {\bigl(\mathbb R^{n_{\theta\IfValueT{#1}{_{#1}}}}\bigr)^{#2}}
    {\mathbb R^{n_{\theta\IfValueT{#1}{_{#1}}}}}%
}
\NewDocumentCommand{\ResAmbientSpace}{e{_^}}{%
  \IfValueTF{#2}
    {\bigl(\mathbb R^{n_{r\IfValueT{#1}{_{#1}}}}\bigr)^{#2}}
    {\mathbb R^{n_{r\IfValueT{#1}{_{#1}}}}}%
}

\newcommand{\DesignProblem}{\mathbf d}

\newcommand{\tdp}{\mathbf d^T}

\newcommand{\GenericQuery}{\overline{\mathbf d}}

\newcommand{\GenericTangentQuery}{\overline{\mathbf d}^{\,T}}
\newcommand{\GenericFiniteTangentQuery}{\overline{\mathbf d}^{\,T\Delta}}

\NewDocumentCommand{\QueryInputSpace}{e{_^}}{%
  [\underline{x\IfValueT{#1}{_{#1}}},\overline{x\IfValueT{#1}{_{#1}}}]%
  \IfValueT{#2}{^{#2}}}
\NewDocumentCommand{\QueryOutputSpace}{e{_^}}{%
  [\underline{y\IfValueT{#1}{_{#1}}},\overline{y\IfValueT{#1}{_{#1}}}]%
  \IfValueT{#2}{^{#2}}}
\NewDocumentCommand{\QueryParameterSpace}{e{_^}}{%
  [\underline{\theta\IfValueT{#1}{_{#1}}},\overline{\theta\IfValueT{#1}{_{#1}}}]%
  \IfValueT{#2}{^{#2}}}

\NewDocumentCommand{\QueryInputAmbientSpace}{e{_}}{\mathbb R^{n_{x\IfValueT{#1}{_{#1}}}}}
\NewDocumentCommand{\QueryOutputAmbientSpace}{e{_}}{\mathbb R^{n_{y\IfValueT{#1}{_{#1}}}}}
\NewDocumentCommand{\QueryParameterAmbientSpace}{e{_}}{\mathbb R^{n_{\theta\IfValueT{#1}{_{#1}}}}}
\NewDocumentCommand{\QueryIntermediateAmbientSpace}{e{_}}{\mathbb R^{n_{q\IfValueT{#1}{_{#1}}}}}

\newcommand{\ThetaInputDim}{n_{\theta,\mathrm{in}}}
\newcommand{\ThetaOutputDim}{n_{\theta,\mathrm{out}}}

\NewDocumentCommand{\TDPFuncAmbientSpace}{e{_^}}{%
  \IfValueTF{#2}
    {\bigl(\mathbb R^{n_{f\IfValueT{#1}{_{#1}}}}\bigr)^{#2}}
    {\mathbb R^{n_{f\IfValueT{#1}{_{#1}}}}}%
}
\NewDocumentCommand{\TDPThetaAmbientSpace}{e{_^}}{%
  \IfValueTF{#2}
    {\bigl(\mathbb R^{n_{\theta\IfValueT{#1}{_{#1}}}}\bigr)^{#2}}
    {\mathbb R^{n_{\theta\IfValueT{#1}{_{#1}}}}}%
}
\NewDocumentCommand{\TDPResAmbientSpace}{e{_^}}{%
  \IfValueTF{#2}
    {\bigl(\mathbb R^{n_{r\IfValueT{#1}{_{#1}}}}\bigr)^{#2}}
    {\mathbb R^{n_{r\IfValueT{#1}{_{#1}}}}}%
}
\NewDocumentCommand{\TDPQueryInputAmbientSpace}{e{_}}{\mathbb R^{n_{x\IfValueT{#1}{_{#1}}}}}
\NewDocumentCommand{\TDPQueryOutputAmbientSpace}{e{_}}{\mathbb R^{n_{y\IfValueT{#1}{_{#1}}}}}
\NewDocumentCommand{\TDPQueryParameterAmbientSpace}{e{_}}{\mathbb R^{n_{\theta\IfValueT{#1}{_{#1}}}}}
\newcommand{\TDPThetaInputAmbientSpace}{\mathbb R^{\ThetaInputDim}}
\newcommand{\TDPThetaOutputAmbientSpace}{\mathbb R^{\ThetaOutputDim}}

\NewDocumentCommand{\FiniteVariationFuncSpace}{e{_^} m}{%
  (\Func\IfValueT{#1}{_{#1}}\IfValueT{#2}{^{#2}}-#3)}
\NewDocumentCommand{\FiniteVariationThetaSpace}{e{_^} m}{%
  (\ThetaSpace\IfValueT{#1}{_{#1}}\IfValueT{#2}{^{#2}}-#3)}
\NewDocumentCommand{\FiniteVariationResSpace}{e{_^} m}{%
  (\Res\IfValueT{#1}{_{#1}}\IfValueT{#2}{^{#2}}-#3)}
\NewDocumentCommand{\FiniteVariationQueryInputSpace}{e{_^} m}{%
  (\QueryInputSpace\IfValueT{#1}{_{#1}}\IfValueT{#2}{^{#2}}-#3)}
\NewDocumentCommand{\FiniteVariationQueryOutputSpace}{e{_^} m}{%
  (\QueryOutputSpace\IfValueT{#1}{_{#1}}\IfValueT{#2}{^{#2}}-#3)}

\NewDocumentCommand{\FiniteVariationFuncAmbientSpace}{e{_}}{\mathbb R^{n_{f\IfValueT{#1}{_{#1}}}}}
\NewDocumentCommand{\FiniteVariationThetaAmbientSpace}{e{_}}{\mathbb R^{n_{\theta\IfValueT{#1}{_{#1}}}}}
\NewDocumentCommand{\FiniteVariationResAmbientSpace}{e{_}}{\mathbb R^{n_{r\IfValueT{#1}{_{#1}}}}}

\NewDocumentCommand{\FiniteVariationQueryInputAmbientSpace}{e{_}}{\mathbb R^{n_{x\IfValueT{#1}{_{#1}}}}}
\NewDocumentCommand{\FiniteVariationQueryOutputAmbientSpace}{e{_}}{\mathbb R^{n_{y\IfValueT{#1}{_{#1}}}}}
\NewDocumentCommand{\FiniteVariationQueryParameterAmbientSpace}{e{_}}{\mathbb R^{n_{\theta\IfValueT{#1}{_{#1}}}}}

\newcommand{\Trace}{\operatorname{Tr}}

\newcommand{\Real}{\mathcal{M}}

\newcommand{\CDPGraph}{\mathcal{G}}
\newcommand{\TCDPGraph}{\CDPGraph^{T}}
\newcommand{\InternalRealization}{\rho}
\newcommand{\Nodes}{\mathcal{V}}
\newcommand{\Edges}{\mathcal{E}}
\newcommand{\TSeries}{\mathbin{\circ^{T}}}
\newcommand{\TParallel}{\mathbin{\otimes^{T}}}
\newcommand{\TIntersection}{\mathbin{\wedge^{T}}}
\newcommand{\TUnion}{\mathbin{\vee^{T}}}
\newcommand{\TTrace}{\Trace^{T}}

\definecolor{functors}{rgb}{0.5,0,0.5}

\newcommand{\Upper}{\operatorname{U}}
\newcommand{\Min}{\operatorname{Min}}

%% file: latex/glossary.tex
\newacronym{dp}{DP}{design problem}
\newacronym{cdp}{CDP}{co-design problem}
\newacronym{tdp}{TDP}{tangential design problem}
\newacronym{tcdp}{TCDP}{tangential co-design problem}
\newacronym{cq}{CQ}{constraint qualification}
\newacronym{mfcq}{MFCQ}{Mangasarian--Fromovitz constraint qualification}
\newacronym{lp}{LP}{linear program}
\newacronym{molp}{MOLP}{multiobjective linear program}
\newacronym{ldp}{LDP}{linear design problem}
\newacronym{poset}{poset}{partially ordered set}
\newacronym{cpo}{CPO}{complete partial order}
\newacronym{dcpo}{DCPO}{directed-complete partial order}
\newacronym{dpi}{DPI}{design problem with implementation}

%% file: sections/abstract.tex
Sensitivity analysis of coupled systems typically propagates derivatives of
single-valued maps, whereas multiobjective optimization generally treats
sensitivity at the system level rather than through interconnected subsystems.
We develop a mathematical framework for compositional sensitivity analysis
in multiobjective system design, building on monotone co-design.
Subsystems are represented as design problems (DPs), interconnected through
series, parallel, union, intersection, and feedback composition.
We introduce tangential DPs (TDPs) to describe first-order variations around
an operating point, show that they preserve the structure of DPs, and
characterize their queries through graphical derivatives.
To account for multiple internal realizations of a system-level operating
point, tangential co-design problems compose families of local TDPs by
taking a union over these realizations while retaining the original
interconnection structure.
We show that these compositions yield families of TDPs and establish
conditions under which sensitivity analysis commutes with individual
composition operations.
Beyond infinitesimal sensitivity, we show that bounded tangential models
of convex DPs provide tighter outer approximations of feasible finite
changes, which can be propagated through co-design problems without
union compositions.
We conclude with several convex and nonconvex numerical examples that
showcase the framework's accuracy and computational benefits.

%% file: sections/introduction.tex
\section{Introduction}
\label{sec:introduction}

Sensitivity analysis has long played a central role in engineering design
by quantifying how design properties change under perturbations of design parameters~\cite{sobieszczanski1990sensitivity,martins2013derivatives}.
Compared with methods that repeatedly perturb and solve the original design problem, sensitivity
analysis can avoid computationally expensive and numerically inaccurate finite-difference
approximations~\cite{haug1978designsensitivity,adelman1986sensitivity,martins2013derivatives}.
Sensitivity information can guide the search for improved designs
and quantify the effects of uncertainty and errors~\cite{haug1978designsensitivity,adelman1986sensitivity,
martins2013derivatives}.

When engineering design involves multiple interacting components and competing objectives,
sensitivity must account both for the propagation of local perturbations through coupled
subsystems~\cite{sobieszczanski1990sensitivity,martins2013derivatives}
and for variations of system-level trade-offs~\cite{tanino1988sensitivity,shi1991contingent}.
Monotone co-design~\cite{zardini2023codesigncomplexsystems, censi2015mathematicaltheorycodesign} provides a principled framework for handling
compositional structures by representing each subsystem as a \gls{dp}, a relation between the
functionalities it provides and the resources required to provide them. By solving
local \glspl{dp} and propagating their solutions through the system interconnections,
this framework retains the system decomposition and supports modular and scalable design
across multiple application domains~\cite{neumann2026strategiccodesign, stralz2026taskdrivencodesignheterogeneousmultirobot, zardini2022taskdriven}.

\begin{figure}[t]
\centering
\includegraphics[width=\columnwidth]
    {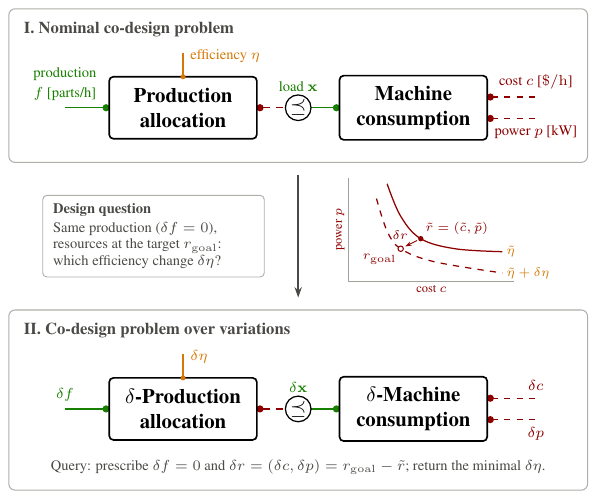}
\caption{
Around a given solution of a \glsentryshort{cdp} (I), questions about variations in resources, functionalities, or parameters can themselves be reformulated as a \glsentryshort{cdp} over variations (II).
}
\label{fig:machine-codesign}
\end{figure}

Recent extensions of monotone co-design have addressed
parametric uncertainty~\cite{huang2026distributional}
and informed sampling~\cite{alharbi2026compositional}.
Sensitivity information could complement these developments by
characterizing how parameter perturbations propagate through the
system composition to system-level trade-offs and by providing
local, gradient-like information to guide adaptive sampling.
However, sensitivity analysis has not yet been
investigated in the context of monotone co-design.
Multiobjective sensitivity methods can be applied after constructing the complete
system-level problem~\cite{tanino1988sensitivity,shi1991contingent,xue2020coderivatives},
but this approach loses the compositional structure.
Conversely, sensitivity methods for coupled multidisciplinary systems can propagate
local derivatives through a system
decomposition~\cite{sobieszczanski1990sensitivity,martins2013derivatives,gawthrop2000sensitivity}.
However, these methods typically rely on differentiable, single-valued
mappings (including those defined by parametric optimization
solvers~\cite{pirnay2012optimalsensitivity} or learned
surrogates~\cite{grontas2026pinet}), whereas a \gls{dp}
induces a set-valued map~\cite{zardini2023codesigncomplexsystems}. Consequently,
single-valued derivatives and their chain rule do not directly apply to monotone co-design.

This raises a natural question: what form should sensitivity analysis
take within the monotone co-design framework?

In this work, we address this question by viewing \glspl{dp} geometrically
as sets of realizable functionality--resource pairs.
Since the compositions of \glspl{dp} act directly on these sets,
their local geometry provides a natural basis for describing sensitivity.
We characterize this geometry through tangent cones and, when a \gls{dp}
is viewed as a set-valued map, through the corresponding graphical
derivatives~\cite{aubin2009setvalued,rockafellar1998variational}.
Intuitively, the graphical derivative fixes a point on the graph
and relates variations of the input to compatible variations of the output
through the tangent cone at that point.
This formulation retains the local set-valued variation of a \gls{dp},
which would be lost by reducing sensitivity to single-valued derivatives.

%% file: sections/problem_and_main_results.tex
\subsection{Problem setup}
\label{sec:problem-setup}

In this paper, we investigate the first-order sensitivity of a
\gls{cdp} \(\CDPGraph\)~\cite{zardini2023codesigncomplexsystems},
a system of interconnected \glspl{dp}.
In particular, we analyze the sensitivity of its associated query
\(\GenericQuery_{\CDPGraph}
:\QueryInputAmbientSpace\times\QueryParameterAmbientSpace
\rightrightarrows\QueryOutputAmbientSpace\),
a set-valued map that takes prescribed functionalities, resources,
and problem parameters as inputs and returns the functionalities
and resources that can be realized by \(\CDPGraph\).
Informally, our goal is to characterize how \(\GenericQuery_{\CDPGraph}\)
responds to small variations in its arguments. Given variations
\((\delta x,\delta\theta)\), we seek a first-order description of the change
from \(\GenericQuery_{\CDPGraph}(x,\theta)\) to
\(\GenericQuery_{\CDPGraph}
(x+\PerturbationScale\delta x,\theta+\PerturbationScale\delta\theta)\)
for small \(\PerturbationScale>0\).
This description plays a role analogous to that of a first-order
Taylor approximation, but for a set-valued map.

While the co-design framework allows general \glspl{poset} for functionalities
and resources, we restrict our analysis to Euclidean ambient spaces and
\glspl{dp} that are closed and contained in bounded boxes.
This setting covers a broad range of practical applications while providing
a foundation for sensitivity analysis.

\subsection{Main results}
\label{sec:main-results}

Our contribution is fourfold.

\textbf{C1.} We introduce \glspl{tdp}, \glspl{dp} over first-order
variations at an existing solution. We show in
Theorem~\ref{thm:tangent-model-design-problem} that this first-order
construction retains the structure of a \gls{dp}.
We characterize their queries in terms of the graphical
derivatives of the corresponding \gls{dp} queries.

\textbf{C2.} We introduce \glspl{tcdp}, which retain the co-design
interconnection structure while associating each block with the family of all
\glspl{tdp} that the block can ``generate''. We define how these families
compose under the standard co-design operations, and show in
Theorem~\ref{thm:tangential-design-problem-composition-closure} that each
member of the resulting family is itself a \gls{tdp}.
Consequently, a \gls{tcdp} induces a family of \glspl{tdp}
(Corollary~\ref{cor:tdp-family-composition-closure}).

\textbf{C3.} We analyze in Theorem~\ref{thm:composition-rules-system-sensitivity}
when individual co-design compositions commute with the sensitivity formulation.
For this analysis, we restrict the points under consideration to the interiors
of the boxes.
It remains an open question how to verify conditions on individual \glspl{dp}
that guarantee commutation at the system level.

\textbf{C4.} We then shift our attention from infinitesimally small
to finite variations
of the \gls{dp} by restricting the ambient variation spaces to the
bounded translated boxes induced by the original admissible boxes. Although
this formulation does not satisfy the assumptions of
Theorem~\ref{thm:composition-rules-system-sensitivity}, we show in
Theorem~\ref{thm:finite-variation-outer-approximation} that, for convex
\glspl{dp}, it yields a tighter outer approximation of feasible variations.
Moreover, for \glspl{cdp} containing no union compositions, this approximation
can be propagated through co-design composition using a single compatible internal
realization (Corollary~\ref{cor:finite-variations-composition}).

%% file: sections/preliminaries.tex
\section{Preliminaries}
\label{sec:preliminaries}

We first introduce the notation and conventions used throughout the paper.
We then review the main aspects of monotone co-design and 
variational analysis used in our work.

\begin{figure*}[!t]
    \centering
    \begin{subfigure}[b]{0.155\textwidth}
        \centering
        \input{tikz/general_dp.tikz}
        \subcaption{A \gls{dp}.}
        \label{fig:mathcodesign}
    \end{subfigure}%
    \hspace{0.012\textwidth}%
    \begin{subfigure}[b]{0.155\textwidth}
        \centering
        \input{tikz/series.tikz}
        \subcaption{Series ($\circ$).}
        \label{fig:dp-series-composition}
    \end{subfigure}%
    \hspace{0.010\textwidth}%
    \begin{subfigure}[b]{0.155\textwidth}
        \centering
        \input{tikz/parallel.tikz}
        \subcaption{Parallel ($\otimes$).}
        \label{fig:dp-parallel-composition}
    \end{subfigure}%
    \hspace{0.000\textwidth}%
    \begin{subfigure}[b]{0.155\textwidth}
        \centering
        \input{tikz/intersection.tikz}
        \subcaption{Intersection ($\wedge$).}
        \label{fig:dp-intersection-composition}
    \end{subfigure}%
    \hspace{-0.005\textwidth}%
    \begin{subfigure}[b]{0.155\textwidth}
        \centering
        \input{tikz/union.tikz}
        \subcaption{Union ($\vee$).}
        \label{fig:dp-union-composition}
    \end{subfigure}%
    \hspace{-0.02\textwidth}%
    \begin{subfigure}[b]{0.155\textwidth}
        \centering
        \input{tikz/loop.tikz}
        \subcaption{Loop ($\operatorname{Tr}$).}
        \label{fig:dp-feedback-composition}
    \end{subfigure}
    \caption{
        A \gls{dp} is a monotone relation between a \gls{poset} of functionalities and a
        \gls{poset} of resources for each parameter value~(a), and can be composed
        through the operations shown in~(b)--(f).
    }
    \label{fig:dp_and_compositions}
    \vspace{-3mm}
\end{figure*}

\subsection{Notation and conventions}
\label{subsec:notation-and-conventions}

For a set \(A\subseteq\mathbb{R}^n\), its closure is denoted by
\(\Closure{A}\), and \(\Ball\) denotes the closed unit ball.
For a family of sets \((A_i)_{i\in I}\), we denote their Cartesian
product by \(\prod_{i\in I} A_i\). For
\(a\in\prod_{i\in I} A_i\), we write
\(a=(a_i)_{i\in I}\), with \(a_i\in A_i\).
If \(A_i\subseteq\mathbb{R}^n\) for all \(i\in I\), then
\(\sum_{i\in I} A_i\) denotes their Minkowski sum.
For \(A\subseteq X\times Y\), its fiber at \(x\in X\) is
\(A_x:=\{y\in Y\mid (x,y)\in A\}\).
For a set \(X\), \(\operatorname{Id}_X\) denotes the identity map on \(X\).
Let \(F:X\rightrightarrows Y\) be a set-valued map. Its domain and graph are
\(\operatorname{dom}F:=\{x\in X\mid F(x)\neq\varnothing\}\) and
\(\Graph F:=\{(x,y)\in X\times Y\mid y\in F(x)\}\), respectively, while its inverse
\(F^{-1}:Y\rightrightarrows X\) is defined by
\(F^{-1}(y):=\{x\in X\mid y\in F(x)\}\).
Set operations on set-valued maps are understood pointwise.
For example, for set-valued maps \(F,G:X\rightrightarrows Y\),
\(
(F\cup G)(x):=F(x)\cup G(x),
\)
and inclusions and equalities between set-valued maps are likewise
understood pointwise.
Properties referring to their values are understood valuewise; for
example, \(F\) is closed-valued if \(F(x)\) is closed for every \(x\in X\).
Ordered Euclidean spaces are equipped with the componentwise order, i.e.,
\(x\preceq y \Leftrightarrow x_i\leq y_i \ \forall i\).
Vector inequalities, minima, and maxima are understood componentwise.
We write
\(
\mathbb R_+^n:=\{x\in\mathbb R^n\mid x\geq 0\}
\)
and
\(
\mathbb R_-^n:=\{x\in\mathbb R^n\mid x\leq 0\}.
\)
Products of ordered spaces are equipped with the product order.
We follow the standard order-theoretic conventions from~\cite{zardini2023codesigncomplexsystems, davey2002introduction}.
We write \(P=(P,\preceq_P)\) for a \gls{poset}.
Throughout the paper, we use the word \emph{preferred} as follows: if a quantity
is to be minimized, \(x\) is preferred to \(y\) if \(x \preceq y\); if it is
to be maximized, \(x\) is preferred to \(y\) if \(y \preceq x\).
The opposite \gls{poset} \(P^{\mathrm{op}}\) has the same elements as \(P\), with
\(
x\preceq_{P^{\mathrm{op}}}y
\Leftrightarrow
y\preceq_P x.
\)
For a \gls{poset} \((P,\preceq_P)\) and \(S\subseteq P\), the upper closure
of \(S\) is
\(
\uparrow S
:=
\{y\in P \mid \exists x\in S:\ x\preceq_P y\}.
\)
A set \(S\subseteq P\) is an upper set if \(\uparrow S=S\).
We denote by \(\Upper(P)\) the collection of upper sets of \(P\).
For \(A\subseteq P\), we denote its set of Pareto-minimal elements by
\(
\Min(A):=\{a\in A\mid \nexists\,a'\in A:
a'\preceq_P a,\ a'\neq a\}.
\)
Thus, \(\Min(A)\) is a set and may contain multiple incomparable elements.

\subsection{Background on monotone co-design}
\label{subsec:background-monotone-codesign}

Monotone co-design composes subsystem models of feasible
functionality--resource trade-offs into a system-level
model~\cite{censi2015mathematicaltheorycodesign,zardini2023codesigncomplexsystems}.

\smallskip
\noindent\emph{What does one design block describe?}
A \gls{dp} models one subsystem as the set of its feasible functionality--resource
trade-offs. In this work, we consider parametrized \glspl{dp},
where these trade-offs depend on a parameter \(\theta\in\GenericParameterSpace\).

\begin{definition}[Design problem]
\label{def:design-problem}
Given \glspl{poset} \(\GenericFunctionalitySpace\) and \(\GenericResourceSpace\) of functionalities and resources,
and a parameter space \(\GenericParameterSpace\) (not necessarily
ordered), a \gls{dp} is a subset
\(\DesignProblem\subseteq\GenericFunctionalitySpace\times\GenericParameterSpace\times\GenericResourceSpace\) such that, for every
\(\theta\in\GenericParameterSpace\), its fiber
\(
\DesignProblem_\theta
:=
\left\{(f,r)\in\GenericFunctionalitySpace\times\GenericResourceSpace
\;\middle|\;(f,\theta,r)\in\DesignProblem\right\}
\)
satisfies
\(
\DesignProblem_\theta
\in
\Upper(\GenericFunctionalitySpace^{\mathrm{op}}\times\GenericResourceSpace)
\).
\(
(f,\theta,r)
\in
\GenericFunctionalitySpace
\times
\GenericParameterSpace
\times
\GenericResourceSpace
\)
is said to be \emph{feasible} if
\(
(f,\theta,r)\in\DesignProblem.
\)
Since
\(\GenericFunctionalitySpace\times\{\theta\}\times\GenericResourceSpace\)
is isomorphic to
\(\GenericFunctionalitySpace\times\GenericResourceSpace\),
the fiber \(\DesignProblem_\theta\) is again a \gls{dp}~\cite{zardini2023codesigncomplexsystems}.
The singleton parameter set can be omitted when no confusion can arise.
\end{definition}

\cref{fig:machine-codesign} illustrates two \glspl{dp} linked by the machine loads \(x\).
Production allocation models how increasing the load \(x\) enables higher
production \(f\) at efficiency \(\eta\), while machine consumption models the
corresponding increase in cost \(c\) and power \(p\).
At fixed \(\theta\), the upper-set condition means that requesting less
functionality or allowing more resources preserves feasibility.
In this work, we specialize to
\(\GenericFunctionalitySpace=\Func\),
\(\GenericParameterSpace=\ThetaSpace\), and
\(\GenericResourceSpace=\Res\), and assume that each \(\DesignProblem\)
considered is closed.
Since sensitivity analysis concerns variations, we treat fixed quantities
as constants and exclude them from the corresponding spaces.
Accordingly, all coordinate intervals in the functionality, parameter,
and resource spaces are assumed to have strictly positive length,
while zero-dimensional spaces remain admissible.

\smallskip
\noindent\emph{What information does a \gls{dp} provide?}
A query is a set-valued map returning the feasible values of unspecified
functionality and resource coordinates.
Writing \(\Func=\Func_x\times\Func_y\) and
\(\Res=\Res_x\times\Res_y\), we collect the prescribed coordinates in
\(\QueryInputSpace\) and the returned coordinates in
\(\QueryOutputSpace\).
Resource coordinates used as inputs and functionality coordinates used as
outputs are equipped with the opposite order:
\(
\QueryInputSpace
:=
\Func_x\times\Res_x^{\mathrm{op}},
\QueryOutputSpace
:=
\Func_y^{\mathrm{op}}\times\Res_y.
\)
We identify \((x,\theta,y)\) with the corresponding \((f,\theta,r)\) through
this regrouping, including in set membership and map evaluations.

We adopt the set-valued map notation used in variational analysis~\cite{rockafellar1998variational} 
for the queries of monotone
co-design~\cite{zardini2023codesigncomplexsystems}. Accordingly,
we regard a query as a set-valued map between the corresponding ambient spaces, with empty 
values outside its effective domain.

\begin{definition}[Query]
\label{def:query-maps}
Let \(\DesignProblem\subseteq\Func\times\ThetaSpace\times\Res\) be a \gls{dp}, with \(\QueryInputSpace\) and \(\QueryOutputSpace\) as above.
For \(x=(f_x,r_x)\in\QueryInputSpace\) and \(\theta\in\ThetaSpace\), the associated query is
\(
\GenericQuery:
\QueryInputAmbientSpace\times\QueryParameterAmbientSpace
\rightrightarrows
\QueryOutputAmbientSpace,
\)
defined by
\(
\GenericQuery(x,\theta)
:=
\left\{
y=(f_y,r_y)\in\QueryOutputSpace
\;\middle|\;
((f_x,f_y),\theta,(r_x,r_y))\in\DesignProblem
\right\}.
\)
\end{definition}

For the production-allocation block in \cref{fig:machine-codesign},
the query returns feasible machine loads \(x\) for fixed production \(f\)
and efficiency \(\eta\).

\smallskip
\noindent\emph{How do interconnected blocks become one system?}
We use two descriptions of how \glspl{dp} are combined.
Composition operations act directly on \glspl{dp} to define the resulting
\gls{dp}, while a \gls{cdp} specifies the local design blocks and their
interconnections, which can be represented diagrammatically.
Series and feedback connect functionalities to resources across or within
subsystems, respectively; parallel combines subsystems, intersection imposes
simultaneous requirements, and union represents alternative choices.

The composition operations for \glspl{dp}, shown schematically in
\cref{fig:dp_and_compositions}, are defined as follows.\footnote{Any combination of these composition operations again yields a \gls{dp} (closure)~\cite{zardini2023codesigncomplexsystems}.}{}

\begin{definition}[Composition operations]
\label{def:composition-operations}

\emph{Series:}
For
\(\DesignProblem_1\subseteq\GenericFunctionalitySpace
\times\GenericParameterSpace_1\times\GenericIntermediateSpace\) and
\(\DesignProblem_2\subseteq\GenericIntermediateSpace
\times\GenericParameterSpace_2\times\GenericResourceSpace\), define
\(
(\DesignProblem_2\circ\DesignProblem_1)_{(\theta_1,\theta_2)}
:=
\{(f,r)\mid
\exists q\in\GenericIntermediateSpace:
(f,q)\in(\DesignProblem_1)_{\theta_1},
\ (q,r)\in(\DesignProblem_2)_{\theta_2}\}.
\)

\emph{Parallel:}
Given
\(\DesignProblem_1
\subseteq
\GenericFunctionalitySpace_1
\times\GenericParameterSpace_1
\times\GenericResourceSpace_1\)
and
\(\DesignProblem_2
\subseteq
\GenericFunctionalitySpace_2
\times\GenericParameterSpace_2
\times\GenericResourceSpace_2\),
define
\(
(\DesignProblem_1\otimes\DesignProblem_2)_{(\theta_1,\theta_2)}
:=
\{
\bigl((f_1,f_2),(r_1,r_2)\bigr)
\mid
(f_1,r_1)\in(\DesignProblem_1)_{\theta_1},
\ (f_2,r_2)\in(\DesignProblem_2)_{\theta_2}\}.
\)

\emph{Intersection and union:}
Given
\(
\DesignProblem_1,\DesignProblem_2
\subseteq
\GenericFunctionalitySpace
\times\GenericParameterSpace
\times\GenericResourceSpace
\),
define
\(
\DesignProblem_1\wedge\DesignProblem_2
:=\DesignProblem_1\cap\DesignProblem_2
\)
and
\(
\DesignProblem_1\vee\DesignProblem_2
:=\DesignProblem_1\cup\DesignProblem_2.
\)

\emph{Feedback:}
For \(\DesignProblem\subseteq
(\GenericFunctionalitySpace\times\GenericIntermediateSpace)
\times\GenericParameterSpace\times
(\GenericResourceSpace\times\GenericIntermediateSpace)\), set
\(
\Trace(\DesignProblem)_\theta
:=
\{(f,r)\mid
\exists q\in\GenericIntermediateSpace:
\bigl((f,q),(r,q)\bigr)\in\DesignProblem_\theta\}.
\)

\end{definition}

We now formalize the \gls{cdp} representation underlying
\cref{fig:machine-codesign,fig:rover-codesign-architecture,fig:drone-codesign}.

\begin{definition}[Co-design problem]
\label{def:codesign-problem}
A \gls{cdp} is a tuple
\(
\CDPGraph
=
\langle
\GenericFunctionalitySpace,
\GenericParameterSpace,
\GenericResourceSpace,
\langle\Nodes,\Edges\rangle
\rangle,
\)
where \(\Nodes\) indexes the local \glspl{dp}
\(
\DesignProblem_v
\subseteq
\GenericFunctionalitySpace_v
\times
\GenericParameterSpace_v
\times
\GenericResourceSpace_v,
\quad v\in\Nodes,
\)
and \(\Edges\) indexes their interconnections, with every functionality and
resource occurring in at most one edge.
Each
\(
e=\langle\langle v,i\rangle,\langle w,j\rangle\rangle\in\Edges
\)
connects matching \glspl{poset}
\(
\GenericResourceSpace_{v,i}
=
\GenericFunctionalitySpace_{w,j}
\)
and imposes
\(
r_e=f_e,
\)
with \(r_e:=r_{v,i}\) and \(f_e:=f_{w,j}\).

The unconnected functionalities and resources are called
\emph{system-level} functionalities and resources, with corresponding
\glspl{poset} \(\GenericFunctionalitySpace\) and
\(\GenericResourceSpace\).
The parameter space \(\GenericParameterSpace\) is obtained from the local
parameter spaces \(\GenericParameterSpace_v\) according to
\cref{def:composition-operations}.
\end{definition}

\cref{def:codesign-problem} uses equality on interconnections,
whereas in diagrams such as \cref{fig:machine-codesign} we use
\(r_e\preceq f_e\) to make explicit that one component may provide more than
another requires. By monotonicity, the two representations induce the same
system-level \gls{dp}.
If a functionality or resource must be interconnected through multiple edges,
introduce \glspl{dp} such as
\(\{(f_1,f_2,r)\mid f_1\preceq r,\ f_2\preceq r\}\)
and
\(\{(f,r_1,r_2)\mid f\preceq r_1,\ f\preceq r_2\}\).

To study sensitivity, we fix a feasible point of the system-level \gls{dp},
which we call an \emph{operating point} and denote by
\(
(\OpPoint{f},\OpPoint{\theta},\OpPoint{r})
\in\DesignProblem.
\)
For a \gls{cdp} \(\CDPGraph\), a system-level operating point
\((\OpPoint{f},\OpPoint{\theta},\OpPoint{r})\) may be obtained from different
choices of the internal interconnection quantities.
We call a choice
\(
\InternalRealization
:=
((\OpPoint{f}_v,\OpPoint{r}_v))_{v\in\Nodes}
\)
that makes every local \gls{dp} feasible and is compatible with
\((\OpPoint{f},\OpPoint{r})\) an \emph{internal realization}.
We denote by
\(
\Real_{\CDPGraph}
(\OpPoint{f},\OpPoint{r};
(\OpPoint{\theta}_v)_{v\in\Nodes})
\)
the set of such internal realizations \(\InternalRealization\).

The system-level \gls{dp} induced by \(\CDPGraph\) is the relation
\(\DesignProblem_{\CDPGraph}\subseteq
\Func\times\ThetaSpace\times\Res\) defined by
\begin{equation}
\label{eq:codesign-induced-design-problem}
    \DesignProblem_{\CDPGraph}
    :=
    \left\{
        (f,\theta,r)
        \;\middle|\;
        \begin{gathered}
            (f,\theta,r)\in\Func\times\ThetaSpace\times\Res,\\
            \Real_{\CDPGraph}
            \left(f,r;(\theta_v)_{v\in\Nodes}\right)\ne\varnothing
        \end{gathered}
    \right\}.
\end{equation}

\subsection{Background on variational analysis}
\label{subsec:background-variational-analysis}

We use variational analysis to describe how sets and set-valued maps change
under small perturbations.
The definitions are based on the work of Rockafellar and
Wets~\cite{rockafellar1998variational}.

Our analysis relies on three main ingredients: the tangent cone to describe
first-order variations of a set, the graphical derivative to relate input and
output variations of a set-valued map, and semidifferentiability to determine
when these models capture the local behavior of the underlying set
or map.

\begin{definition}[Tangent cone
{\cite[Definition~6.1]{rockafellar1998variational}}]
\label{def:tangent-cone}
Let \(\Set\subseteq\mathbb{R}^n\) and \(\OpPoint{z}\in\Set\).
The tangent cone to \(\Set\) at \(\OpPoint{z}\), denoted by
\(\Tan_{\Set}(\OpPoint{z})\subseteq\mathbb{R}^n\), is the set of directions
\(\delta z\) for which there exist sequences
\(\PerturbationScale_k\downarrow 0\) and
\(\delta z_k\to\delta z\) such that
\(\OpPoint{z}+\PerturbationScale_k\delta z_k\in\Set\)
for every \(k\).
\end{definition}

Tangent directions need not be straight feasible steps, i.e.,
\(
\delta z\in\Tan_{\Set}(\OpPoint{z})
\nRightarrow
\exists\,\PerturbationScale>0:
\OpPoint{z}+\PerturbationScale\delta z\in\Set
\),
but can be approached by feasible nearby variations.

If \(\mathbb{R}^n=\mathbb{R}^{n_x}\times\mathbb{R}^{n_y}\), we abbreviate
\(\Tan_{\Set}((\OpPoint{x},\OpPoint{y}))\) as
\(\Tan_{\Set}(\OpPoint{x},\OpPoint{y})\).

\begin{example}[Tangent cone to a box]
\label{rem:tangent-cone-box}
Given \(B=\prod_{i=1}^n[\underline z_i,\overline z_i]\) and \(z\in B\),
\(
T_B(z)=\prod_{i=1}^n T_{[\underline z_i,\overline z_i]}(z_i)
\),
where
\(
T_{[\underline z_i,\overline z_i]}(z_i)
=
\mathbb R
\)
if \(\underline z_i<z_i<\overline z_i\),
\(
T_{[\underline z_i,\overline z_i]}(z_i)
=
\mathbb R_+
\)
if \(z_i=\underline z_i<\overline z_i\),
\(
T_{[\underline z_i,\overline z_i]}(z_i)
=
\mathbb R_-
\)
if \(\underline z_i<z_i=\overline z_i\), and
\(
T_{[\underline z_i,\overline z_i]}(z_i)
=
\{0\}
\)
if \(\underline z_i=z_i=\overline z_i\).
\end{example}

\begin{definition}[Graphical derivative
{\cite[Definition~8.33]{rockafellar1998variational}}]
\label{def:graphical-derivative}

Let \(F:\mathbb{R}^{n_x}\rightrightarrows\mathbb{R}^{n_y}\) and let
\((\OpPoint{x},\OpPoint{y})\in\Graph F\).
The graphical derivative of \(F\) at
\((\OpPoint{x},\OpPoint{y})\) is the set-valued map
\(
DF(\OpPoint{x},\OpPoint{y}):
\mathbb{R}^{n_x}\rightrightarrows\mathbb{R}^{n_y}
\)
defined by
\(DF(\OpPoint{x},\OpPoint{y})(\delta x)
:=
\{\delta y\in\mathbb{R}^{n_y}
\mid
(\delta x,\delta y)\in
\Tan_{\Graph F}(\OpPoint{x},\OpPoint{y})\}\).
\end{definition}

At \((\OpPoint{x},\OpPoint{y})\), the tangent cone to the graph relates
input and output variations, while the graphical derivative collects the
compatible output variations for a prescribed input variation.

\begin{definition}[Semidifferentiability
{\cite[Eq.~(8.29), p.~332]{rockafellar1998variational}}]
\label{def:semidifferentiability}
Let \(F:\mathbb{R}^{n_x}\rightrightarrows\mathbb{R}^{n_y}\), and let
\((\OpPoint{x},\OpPoint{y})\in\Graph F\).
We say that \(F\) is \emph{semidifferentiable} at \(\OpPoint{x}\) for
\(\OpPoint{y}\) if
\(\operatorname{dom}DF(\OpPoint{x},\OpPoint{y})=\mathbb{R}^{n_x}\)
and, for all \(\delta x\in\mathbb{R}^{n_x}\),
\(\delta y\in DF(\OpPoint{x},\OpPoint{y})(\delta x)\),
\(\PerturbationScale_k\downarrow0\), and \(\delta x_k\to\delta x\),
there exists \(\delta y_k\to\delta y\) such that
\(\OpPoint{y}+\PerturbationScale_k\delta y_k
\in F(\OpPoint{x}+\PerturbationScale_k\delta x_k)\)
for all \(k\) large enough.
\end{definition}
Semidifferentiability rules out first-order predictions that are valid only
along some perturbation sequences.

We now introduce two notions that together provide
a sufficient condition for semidifferentiability.
The first is protodifferentiability:

\begin{definition}[Protodifferentiability
{\cite[Proposition~8.41]{rockafellar1998variational}}]
\label{def:protodifferentiability}
Let \(F:\mathbb{R}^{n_x}\rightrightarrows\mathbb{R}^{n_y}\) and
\((\OpPoint{x},\OpPoint{y})\in\Graph F\).
The map \(F\) is protodifferentiable at \(\OpPoint{x}\) for
\(\OpPoint{y}\) if, for every
\((\delta x,\delta y)\in\Tan_{\Graph F}(\OpPoint{x},\OpPoint{y})\),
there exists a mapping
\(\xi:[0,\varepsilon)\to\Graph F\) such that
\(\xi(0)=(\OpPoint{x},\OpPoint{y})\) and
\(\lim_{\PerturbationScale\downarrow0}
(\xi(\PerturbationScale)-(\OpPoint{x},\OpPoint{y}))
/\PerturbationScale
=(\delta x,\delta y)\).
\end{definition}

Protodifferentiability ensures that every variation described
by the graphical derivative can be realized by a curve on the
graph of the set-valued map.
\begin{figure}[t]
    \centering

    \includegraphics[width=0.80\linewidth]
    {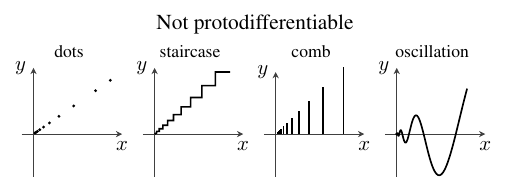}

    \vspace{0.5em}

    \includegraphics[width=0.80\linewidth]
    {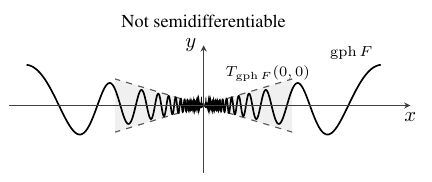}

    \caption{Top: graphs of maps that are not protodifferentiable.
    Bottom: graph of a protodifferentiable map that is not semidifferentiable.}
    \label{fig:regularity-properties}
\end{figure}
\cref{fig:regularity-properties} (top) illustrates several pathologies
excluded by protodifferentiability. In each case, the tangent cone contains
tangent directions generated by irregular local behavior that cannot be realized by a
first-order path on the graph.

The oscillation panel uses \(F(x)=x\sin(4\log x)\) for \(x>0\), with
\(F(0)=0\). The direction \((1,0)\) is tangent to its graph at the origin,
as witnessed by \(x_k=e^{-k\pi/4}\).
Assume \(\exists\,\xi:[0,\varepsilon)\to\Graph F\) such that
\(\xi(h)=(x(h),F(x(h)))\), \(\xi(0)=(0,0)\), and
\(\lim_{h\downarrow0}\xi(h)/h=(1,0)\).
Then \(x(h)/h\to1\).
Along \(h_k=e^{\pi/8-k\pi/2}\), however,
\(\log x(h_k)=\log h_k+o(1)\), so \(F(x(h_k))/h_k\to1\), contradicting
\(\lim_{h\downarrow0}F(x(h))/h=0\).
Thus, \(F\) is not protodifferentiable at the origin.

\cref{fig:regularity-properties} (bottom) shows the graph of
\(F(x)=|x|\sin(1/|x|)\), \(F(0)=0\), which is protodifferentiable at the
origin but not semidifferentiable: some tangent directions can be realized
along some perturbation sequences but not along others.

Rockafellar and Wets~\cite{rockafellar1998variational} show in
Proposition~9.50 that the additional stability needed to obtain
semidifferentiability is provided by the Aubin property.

\begin{definition}[Aubin property
{\cite[Definition~9.36]{rockafellar1998variational}}]
\label{def:aubin-property}
A set-valued map
\(F:\mathbb{R}^{n_x}\rightrightarrows\mathbb{R}^{n_y}\)
has the Aubin property around
\((\OpPoint{x},\OpPoint{y})\in\Graph F\) if \(\Graph F\) is locally closed at
\((\OpPoint{x},\OpPoint{y})\) and there exist neighborhoods \(U\) of \(\OpPoint{x}\)
and \(V\) of \(\OpPoint{y}\), and \(L\geq 0\), such that
\(
F(x_1)\cap V
\subseteq
F(x_2)+L\|x_1-x_2\|\Ball,
\)
for all \(x_1,x_2\in U\).
\end{definition}
The Aubin property is a local Lipschitz-type regularity condition around a
selected point \((\OpPoint{x},\OpPoint{y})\) of the graph. It ensures that small
changes in the input correspond to proportionally small changes in the outputs near
\(\OpPoint{y}\).

\cref{def:aubin-property,def:semidifferentiability} require
\(\OpPoint{x}\in\operatorname{int}(\operatorname{dom}F)\).
These regularity notions are very mild:
they mainly exclude pathological local behavior of the kind illustrated in
\cref{fig:regularity-properties},
and appear frequently in sensitivity analysis of optimization and generalized
equations; see, e.g.,
\cite{rockafellar1989protodifferentiability,rockafellar1998variational,li2009calculus}.

%% file: tikz/general_dp.tikz
\begin{tikzpicture}[DP]
            \node[dp={2}{2}] (cnt) {DP};
            \draw[runconn, runame={resources}, relres=above,posres=0.9] (cnt_res1){};
            \draw[runconn, runame={}, relres=above,posres=0.9] (cnt_res2){};
            \draw[funconn, funame={functionalities},relfun=above,posfun=1.15] (cnt_fun1){};
            \draw[funconn, funame={},relfun=above,posfun=1.15] (cnt_fun2){};
            \coordinate (cnt_param1) at ($(cnt.north west)!1/3!(cnt.north east)$);
            \coordinate (cnt_param2) at ($(cnt.north west)!2/3!(cnt.north east)$);
            \draw[dpyellow] (cnt_param1) -- ++(0,0.42cm);
            \draw[dpyellow] (cnt_param2) -- ++(0,0.42cm);
            \fill[dpyellow] (cnt_param1) circle[radius=1.2pt];
            \fill[dpyellow] (cnt_param2) circle[radius=1.2pt];
            \node[dpyellow, anchor=south] at ($(cnt.north)+(0,0.44cm)$)
                {\footnotesize parameters};
            \pgfresetboundingbox
            \path[use as bounding box]
                ($(cnt.center)+(-1.2cm,-0.6cm)$)
                rectangle
                ($(cnt.center)+(1.2cm,1.25cm)$);
\end{tikzpicture}

%% file: tikz/series.tikz
\scalebox{0.8}{\begin{tikzpicture}[DP]
    \node[dp={1}{1}] (f) {$d$};
    \node[dp={1}{1}, right=1cm of f] (g) {$e$};
    \draw[rconn, rcname={}, fcname={}] (f_res1)  to (g_fun1);
    \draw[runconn, runame={}] (g_res1);
    \draw[funconn, funame={}] (f_fun1);
    \draw[dpyellow] (f.north) -- ++(0,0.4cm);
    \draw[dpyellow] (g.north) -- ++(0,0.4cm);
    \fill[dpyellow] (f.north) circle[radius=1.2pt];
    \fill[dpyellow] (g.north) circle[radius=1.2pt];
    \node[dpyellow, anchor=south] at ($(f.north)+(0,0.4cm)$)
        {\footnotesize{$T$}};
    \node[dpyellow, anchor=south] at ($(g.north)+(0,0.4cm)$)
        {\footnotesize{$V$}};
\end{tikzpicture}}

%% file: tikz/parallel.tikz
\scalebox{0.8}{\begin{tikzpicture}[DP]
    % === Upper half: two independent DPs ===
    \node[dp={1}{1}] (d) {$d$};
    \node[dp={1}{1}, below=0.4cm of d] (e) {$e$};
    % Port connections with labels
    \draw[funconn, funame={$P$}, relfun=above] (d_fun1);
    \draw[runconn, runame={$Q$}, relres=above] (d_res1);
    \draw[funconn, funame={$R$}, relfun=below] (e_fun1);
    \draw[runconn, runame={$S$}, relres=below] (e_res1);
    % Dashed enclosing box
    \node[draw, dashed, rounded corners=2pt, thick,
          fit=(d)(e), inner xsep=6pt, inner ysep=4pt] (enc) {};
    \draw[dpyellow] (d.north) -- ++(0,0.4cm);
    \draw[dpyellow] (e.south) -- ++(0,-0.4cm);
    \fill[dpyellow] (d.north) circle[radius=1.2pt];
    \fill[dpyellow] (e.south) circle[radius=1.2pt];
    \node[dpyellow, anchor=south] at ($(d.north)+(0,0.4cm)$)
        {\footnotesize{$T$}};
    \node[dpyellow, anchor=north] at ($(e.south)+(0,-0.4cm)$)
        {\footnotesize{$V$}};
\end{tikzpicture}}

%% file: tikz/intersection.tikz
\scalebox{0.8}{\begin{tikzpicture}[DP]
    % === Upper half: intersection of two DPs ===
    \node[dp={1}{1}] (d) {$d$};
    \node[dp={1}{1}, below=0.6cm of d] (e) {$e$};
    % Solid enclosing box
    \node[draw, solid, rounded corners=2pt, thick,
          fit=(d)(e), inner xsep=10pt, inner ysep=4pt] (enc) {};
    % Split: input P forks to d and e
    \coordinate (split) at (enc.west);
    \draw[dpgreen] (split) -- (d_fun1);
    \draw[dpgreen] (split) -- (e_fun1);
    \draw[dpgreen] ($(split)+(-0.5cm,0)$) -- (split)
          node[above, pos=0, dpgreen, fill=white, inner sep=1pt, outer sep=1pt]
          {\footnotesize{$P$}};
    % Merge: outputs of d and e converge to single Q
    \coordinate (merge) at (enc.east);
    \draw[dpred, dashed] (d_res1) -- (merge);
    \draw[dpred, dashed] (e_res1) -- (merge);
    \draw[dpred, dashed] (merge) -- ($(merge)+(0.5cm,0)$)
          node[above, pos=1, dpred, fill=white, inner sep=1pt, outer sep=1pt]
          {\footnotesize{$Q$}};
    % Parameter connection for the composite box
    \draw[dpyellow] (enc.north) -- ++(0,0.4cm);
    \fill[dpyellow] (enc.north) circle[radius=1.2pt];
    \node[dpyellow, anchor=south] at ($(enc.north)+(0,0.4cm)$)
        {\footnotesize{$T$}};
    % Wedge symbol between boxes
    \node[text=purple] at ($(d.south)!0.5!(e.north)$) {$\wedge$};
\end{tikzpicture}}

%% file: tikz/union.tikz
\scalebox{0.8}{\begin{tikzpicture}[DP]
    % === Upper half: union of two DPs ===
    \node[dp={1}{1}] (d) {$d$};
    \node[dp={1}{1}, below=0.6cm of d] (e) {$e$};
    % Solid enclosing box
    \node[draw, solid, rounded corners=2pt, thick,
          fit=(d)(e), inner xsep=10pt, inner ysep=4pt] (enc) {};
    % Split: input P forks to d and e
    \coordinate (split) at (enc.west);
    \draw[dpgreen] (split) -- (d_fun1);
    \draw[dpgreen] (split) -- (e_fun1);
    \draw[dpgreen] ($(split)+(-0.5cm,0)$) -- (split)
          node[above, pos=0, dpgreen, fill=white, inner sep=1pt, outer sep=1pt]
          {\footnotesize{$P$}};
    % Merge: outputs of d and e converge to single Q
    \coordinate (merge) at (enc.east);
    \draw[dpred, dashed] (d_res1) -- (merge);
    \draw[dpred, dashed] (e_res1) -- (merge);
    \draw[dpred, dashed] (merge) -- ($(merge)+(0.5cm,0)$)
          node[above, pos=1, dpred, fill=white, inner sep=1pt, outer sep=1pt]
          {\footnotesize{$Q$}};
    % Parameter connection for the composite box
    \draw[dpyellow] (enc.north) -- ++(0,0.4cm);
    \fill[dpyellow] (enc.north) circle[radius=1.2pt];
    \node[dpyellow, anchor=south] at ($(enc.north)+(0,0.4cm)$)
        {\footnotesize{$T$}};
    % Union symbol between boxes
    \node[text=purple] at ($(d.south)!0.5!(e.north)$) {$\vee$};
\end{tikzpicture}}

%% file: tikz/loop.tikz
\scalebox{0.8}{\begin{tikzpicture}[
    DP,
    mirrored rconn/.style={
        color=white,
        opacity=0,
        postaction={decorate},
        shorten <=3.2pt,
        shorten >=0.8pt,
        decoration={
            markings,
            mark=at position 0 with {
                \coordinate (a);
            },
            mark=at position .5 with {
                \coordinate (b);
                \draw[dashed,dpred,opacity=1]
                    (a) to[out=-90,in=-90,looseness=3] (b);
            },
            mark=at position 1 with {
                \coordinate (c);
                \draw[dpgreen,opacity=1]
                    (c) to[out=0,in=0,looseness=3] (b);
                \node[
                    draw,
                    circle,
                    inner sep=0.5pt,
                    color=black,
                    fill=white,
                    opacity=1
                ] at (b) (nodepreceq) {$\succeq$};
            }
        }
    }
]
    \node[dp={2}{2}] (f) {$d$};

    % External upper functionality input and resource output.
    \draw[runconn, runame={}] (f_res1){};
    \draw[funconn, funame={}] (f_fun1){};

    % Vertically reflected form of the original rconn feedback route.
    \draw[mirrored rconn]
        (f_res2) -- ($(f)+(0,-4)$) |- (f_fun2);

    % Unlabelled parameter connection.
    \draw[dpyellow] (f.north) -- ++(0,0.4cm);
    \fill[dpyellow] (f.north) circle[radius=1.2pt];
    \node[dpyellow, anchor=south] at ($(f.north)+(0,0.4cm)$)
        {\footnotesize{$T$}};
\end{tikzpicture}}

%% file: sections/tangential_design_problems_and_queries.tex
\section{Tangential design problems and queries}
\label{sec:tangential-design-problems-and-queries}

We construct a first-order model around an existing solution
and characterize its induced query.
\subsection{Tangential design problems}
\label{subsec:tangential-design-problems}

We consider
\(
\Tan_{\DesignProblem}
(\OpPoint{f},\OpPoint{\theta},\OpPoint{r})
\)
as the basic ingredient of our first-order model around an operating point
\(
(\OpPoint{f},\OpPoint{\theta},\OpPoint{r})\in\DesignProblem
\).
The following result shows that the tangent cones naturally retain 
the upper-set structure of a \gls{dp}.

\begin{theorem}[Design-problem structure of the tangent cone]
\label{thm:tangent-model-design-problem}
Let
\(
\DesignProblem
\subseteq
\Func\times\ThetaSpace\times\Res
\)
be a \gls{dp}, and let
\(
(\OpPoint{f},\OpPoint{\theta},\OpPoint{r})\in\DesignProblem
\)
be such that
\(
\OpPoint{f}\in\operatorname{int}(\Func)
\)
and
\(
\OpPoint{r}\in\operatorname{int}(\Res).
\)
Then, for every
\(
\delta\theta\in\TDPThetaAmbientSpace,
\)
\(
\left(
\Tan_{\DesignProblem}
(\OpPoint{f},\OpPoint{\theta},\OpPoint{r})
\right)_{\delta\theta}
\in
\Upper\!\left(
    \left(\TDPFuncAmbientSpace\right)^{\mathrm{op}}
    \times
    \TDPResAmbientSpace
\right).
\)
\end{theorem}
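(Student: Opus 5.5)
We must show the following: if \((\delta f,\delta\theta,\delta r)\in T:=\Tan_{\DesignProblem}(\OpPoint{f},\OpPoint{\theta},\OpPoint{r})\), \(\delta f'\preceq\delta f\) and \(\delta r\preceq\delta r'\), then \((\delta f',\delta\theta,\delta r')\in T\). This is exactly the upper-set condition in \(\left(\TDPFuncAmbientSpace\right)^{\mathrm{op}}\times\TDPResAmbientSpace\). The plan is to lift the monotone shift directly to the approximating sequences from \cref{def:tangent-cone}, using the upper-set property of each fiber \(\DesignProblem_\theta\) from \cref{def:design-problem}. The interiority assumptions will ensure that the shifted points stay inside the boxes \(\Func\) and \(\Res\).

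First, fix such a tangent direction. By \cref{def:tangent-cone} there are \(h_k\downarrow0\) and \((\delta f_k,\delta\theta_k,\delta r_k)\to(\delta f,\delta\theta,\delta r)\) with
\[
(\OpPoint{f}+h_k\delta f_k,\ \OpPoint{\theta}+h_k\delta\theta_k,\ \OpPoint{r}+h_k\delta r_k)\in\DesignProblem .
\]
Define the shifted sequences \(\delta f'_k:=\delta f_k+(\delta f'-\delta f)\) and \(\delta r'_k:=\delta r_k+(\delta r'-\delta r)\), keeping \(\delta\theta_k\) unchanged. Then \(\delta f'_k\to\delta f'\) and \(\delta r'_k\to\delta r'\). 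Componentwise we have
\[
\OpPoint{f}+h_k\delta f'_k\preceq\OpPoint{f}+h_k\delta f_k,
\qquad
\OpPoint{r}+h_k\delta r_k\preceq\OpPoint{r}+h_k\delta r'_k .
\]
So, in the order of \(\Func^{\mathrm{op}}\times\Res\), the new pair lies above the old one within the same fiber \(\theta_k:=\OpPoint{\theta}+h_k\delta\theta_k\).

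Second, check that the new points lie in the ambient boxes; this is the only place the hypotheses enter. Since \(\OpPoint{f}\in\operatorname{int}(\Func)\), there is \(\varepsilon>0\) with \(\OpPoint{f}+\varepsilon\Ball\subseteq\Func\). The sequence \(\delta f'_k\) is bounded because it converges. Hence \(h_k\|\delta f'_k\|<\varepsilon\) for all \(k\) large enough, so \(\OpPoint{f}+h_k\delta f'_k\in\Func\). The same argument for \(\OpPoint{r}\in\operatorname{int}(\Res)\) gives \(\OpPoint{r}+h_k\delta r'_k\in\Res\).

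The upper-set property of \(\DesignProblem_{\theta_k}\) in \(\Func^{\mathrm{op}}\times\Res\) then gives \((\OpPoint{f}+h_k\delta f'_k,\theta_k,\OpPoint{r}+h_k\delta r'_k)\in\DesignProblem\) for all large \(k\). Discarding finitely many initial terms keeps the defining properties of the sequences. Therefore \((\delta f',\delta\theta,\delta r')\in T\), which proves that the fiber \(T_{\delta\theta}\) is an upper set for every \(\delta\theta\in\TDPThetaAmbientSpace\). If some \(\delta\theta\) admits no tangent direction, the fiber is empty, which is trivially an upper set.

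The main point to get right is the box-membership step. Without interiority, the fiber of \(\DesignProblem\) is an upper set only relative to \(\Func^{\mathrm{op}}\times\Res\), not relative to the whole ambient space. Near a boundary, shifting a point toward a preferred value could leave the box, and the claim can fail: the tangent cone to the box then contains only half-lines in some coordinates, as in \cref{rem:tangent-cone-box}. Interiority removes this obstruction because the perturbations \(h_k\delta f'_k\) and \(h_k\delta r'_k\) shrink to zero. The remaining step needs no convexity or closedness, only the monotonicity of each fiber.
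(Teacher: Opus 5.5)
Your proof is correct and matches the paper's argument: you shift the approximating sequences by the constant offsets \(\delta f'-\delta f\) and \(\delta r'-\delta r\), use interiority together with boundedness of the convergent sequences to keep the shifted points in the boxes for large \(k\), and then invoke upperness of the fiber \(\DesignProblem_{\OpPoint{\theta}+h_k\delta\theta_k}\). Your added remarks on the empty fiber and on why interiority is needed are consistent with the paper's subsequent boundary example.
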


\begin{proof}
Let
\(
(\delta f,\delta r)
\in
\left(
\Tan_{\DesignProblem}
(\OpPoint{f},\OpPoint{\theta},\OpPoint{r})
\right)_{\delta\theta},
\)
and let
\(
\delta f'\preceq\delta f
\)
and
\(
\delta r\preceq\delta r'
\).
By definition, there exist \(\PerturbationScale_k\downarrow0\) and
\(
(\delta f_k,\delta\theta_k,\delta r_k)
\to
(\delta f,\delta\theta,\delta r)
\)
such that
\(
(\OpPoint{f}+\PerturbationScale_k\delta f_k,
 \OpPoint{\theta}+\PerturbationScale_k\delta\theta_k,
 \OpPoint{r}+\PerturbationScale_k\delta r_k)
\in\DesignProblem.
\)
Define
\(
\delta f_k':=\delta f_k+\delta f'-\delta f
\)
and
\(
\delta r_k':=\delta r_k+\delta r'-\delta r
\).
Then
\(
\delta f_k'\to\delta f'
\),
\(
\delta r_k'\to\delta r'
\),
\(
\OpPoint{f}+\PerturbationScale_k\delta f_k'
\preceq
\OpPoint{f}+\PerturbationScale_k\delta f_k
\),
and
\(
\OpPoint{r}+\PerturbationScale_k\delta r_k
\preceq
\OpPoint{r}+\PerturbationScale_k\delta r_k'
\).
Since
\(
\OpPoint{f}\in\operatorname{int}(\Func)
\)
and
\(
\OpPoint{r}\in\operatorname{int}(\Res),
\)
and the sequences
\(
(\delta f_k')
\)
and
\(
(\delta r_k')
\)
are bounded, for all sufficiently large \(k\),
\(
\OpPoint{f}+\PerturbationScale_k\delta f_k'\in\Func
\)
and
\(
\OpPoint{r}+\PerturbationScale_k\delta r_k'\in\Res.
\)
Since \(\DesignProblem_{(\OpPoint{\theta}+\PerturbationScale_k\delta\theta_k)}\)
is upper in \(\Func^{\mathrm{op}}\times\Res\),
\(
(\OpPoint{f}+\PerturbationScale_k\delta f_k',
 \OpPoint{\theta}+\PerturbationScale_k\delta\theta_k,
 \OpPoint{r}+\PerturbationScale_k\delta r_k')
\in\DesignProblem.
\)
Moreover,
\(
(\delta f_k',\delta\theta_k,\delta r_k')
\to
(\delta f',\delta\theta,\delta r').
\)
Hence
\(
(\delta f',\delta\theta,\delta r')
\in
\Tan_{\DesignProblem}
(\OpPoint{f},\OpPoint{\theta},\OpPoint{r}),
\)
which proves the statement.
\end{proof}

The interior assumption is essential: at the boundary of the bounded domains,
\(\Tan_{\DesignProblem}\) need not be upper in the ambient space, as the
following example shows.

\begin{example}
Consider one-dimensional functionality and resource spaces
\(\FuncAmbientSpace=\ResAmbientSpace=\mathbb R\), with
\(\Func=\Res=[0,1]\) and \(\DesignProblem=\Func\times\Res\).
At \((\OpPoint{f},\OpPoint{r})=(0,1)\),
\(\Tan_{\DesignProblem}(0,1)=[0,\infty)\times(-\infty,0]\),
which is not upper in
\(\TDPFuncAmbientSpace^{\mathrm{op}}\times\TDPResAmbientSpace\),
whereas
\(\Tan_{\DesignProblem}(0,1)+(-\mathbb R_+\times\mathbb R_+)=\mathbb R^2\).
\end{example}

\cref{thm:tangent-model-design-problem} allows us to regard the tangent
cone itself as a \gls{dp}, provided that the operating point does not lie
on the boundary of the functionality or resource domains. To preserve the design-problem 
structure at boundary operating points as well, we
add the ordering cone
\(
K:=
-\mathbb R^{\FuncDim}_{+}
\times
\{0\}^{\ThetaDim}
\times
\mathbb R^{\ResDim}_{+}.
\)

\begin{definition}[Tangential design problem]
\label{def:tangential-design-problem}
Let $\DesignProblem\subseteq\Func\times\ThetaSpace\times\Res$ be a \gls{dp} and let
\((\OpPoint{f},\OpPoint{\theta},\OpPoint{r})\in\DesignProblem\). 
The \gls{tdp} at
\(
(\OpPoint{f},\OpPoint{\theta},\OpPoint{r})
\)
is the \gls{dp}
\(
\tdp_{\OpPoint{f},\OpPoint{\theta},\OpPoint{r}}
:=
\Tan_{\DesignProblem}
(\OpPoint{f},\OpPoint{\theta},\OpPoint{r})
+
K.
\)
We denote by
\(
\tdp
:=
\left\{
\tdp_{\OpPoint{f},\OpPoint{\theta},\OpPoint{r}}
\;\middle|\;
(\OpPoint{f},\OpPoint{\theta},\OpPoint{r})\in\DesignProblem
\right\}
\)
the family of \glspl{tdp} obtained from the feasible points of
\(\DesignProblem\).
\end{definition}

This addition restores upperness at boundary points while leaving the
construction unchanged in the interior:
\(
\Tan_{\DesignProblem}
(\OpPoint{f},\OpPoint{\theta},\OpPoint{r})+K
=
\Tan_{\DesignProblem}
(\OpPoint{f},\OpPoint{\theta},\OpPoint{r}),
\forall \OpPoint{f}\in\operatorname{int}(\Func),
\OpPoint{r}\in\operatorname{int}(\Res).
\)

\begin{lemma}[\glspl{tdp} as tangent cones]
\label{lem:tdp-tangent-ordering-cone-extension}
Let \(\DesignProblem\subseteq\Func\times\ThetaSpace\times\Res\) be a \gls{dp},
let \((\OpPoint f,\OpPoint\theta,\OpPoint r)\in\DesignProblem\), and let
\(K=-\mathbb{R}^{\FuncDim}_+\times\{0\}^{\ThetaDim}
\times\mathbb{R}^{\ResDim}_+\).
Then
\(
\tdp_{\OpPoint f,\OpPoint\theta,\OpPoint r}
= 
\Tan_{\DesignProblem+K}
(\OpPoint f,\OpPoint\theta,\OpPoint r).
\)
In particular,
\(\tdp_{\OpPoint f,\OpPoint\theta,\OpPoint r}\)
is a closed cone.
\end{lemma}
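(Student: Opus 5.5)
The plan is to prove the two inclusions between \(\Tan_{\DesignProblem}(\OpPoint z)+K\) and \(\Tan_{\DesignProblem+K}(\OpPoint z)\) directly from \cref{def:tangent-cone}, writing \(\OpPoint z:=(\OpPoint f,\OpPoint\theta,\OpPoint r)\). The closed-cone claim then comes for free. Indeed, every tangent cone is a closed cone \cite[Lemma~6.2]{rockafellar1998variational}, so the identity \(\tdp_{\OpPoint f,\OpPoint\theta,\OpPoint r}=\Tan_{\DesignProblem+K}(\OpPoint z)\) exhibits the \gls{tdp} as one. Here \(\DesignProblem+K\) is the Minkowski sum in the ambient space, and \(\OpPoint z\in\DesignProblem\subseteq\DesignProblem+K\) since \(0\in K\).

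For the inclusion \(\subseteq\), I would first note that \(\Tan_{\DesignProblem}(\OpPoint z)\subseteq\Tan_{\DesignProblem+K}(\OpPoint z)\), since \(\DesignProblem\subseteq\DesignProblem+K\). It then suffices to show that \(\Tan_{\DesignProblem+K}(\OpPoint z)+K\subseteq\Tan_{\DesignProblem+K}(\OpPoint z)\). Take \(v\) tangent, witnessed by \(\PerturbationScale_k\downarrow0\) and \(v_k\to v\), and take \(\kappa\in K\). Then
\(\OpPoint z+\PerturbationScale_k(v_k+\kappa)=(\OpPoint z+\PerturbationScale_k v_k)+\PerturbationScale_k\kappa\in\DesignProblem+K+K=\DesignProblem+K\),
because \(K\) is a convex cone. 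The sequence \(v_k+\kappa\to v+\kappa\) witnesses tangency of \(v+\kappa\).

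For the inclusion \(\supseteq\), which I expect to be the main obstacle, take \(w\in\Tan_{\DesignProblem+K}(\OpPoint z)\) with \(\OpPoint z+\PerturbationScale_k w_k=d_k+\kappa_k\), where \(d_k\in\DesignProblem\) and \(\kappa_k\in K\). The naive split \(w_k=(d_k-\OpPoint z)/\PerturbationScale_k+\kappa_k/\PerturbationScale_k\) fails, because \((d_k-\OpPoint z)/\PerturbationScale_k\) need not be bounded. I would instead replace \(d_k\) by the componentwise projection \(u_k\) of \(\OpPoint z+\PerturbationScale_k w_k\) onto the box \(\Func\times\ThetaSpace\times\Res\), and verify three things.
\begin{enumerate}
\item The \(\theta\)-components agree: \(K\) has zero \(\theta\)-part, so the \(\theta\)-component of \(\OpPoint z+\PerturbationScale_k w_k\) equals that of \(d_k\), which lies in \(\ThetaSpace\), and clamping leaves it unchanged.
\item The point \(u_k\) lies in \(\DesignProblem\). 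In the \(f\)-components, \(\OpPoint f+\PerturbationScale_k\delta f\)-part equals \(d_k^f+\kappa_k^f\le d_k^f\le\overline f\), so \(u_k^f=\max(\underline f,d_k^f+\kappa_k^f)\le d_k^f\) and \(u_k^f\in\Func\). Symmetrically, \(u_k^r\succeq d_k^r\) and \(u_k^r\in\Res\). The upper-set property of the fiber \(\DesignProblem_{\theta}\) in \(\Func^{\mathrm{op}}\times\Res\) (\cref{def:design-problem}) then gives \(u_k\in\DesignProblem\).
\item The remainder lies in \(K\): the same componentwise comparison shows \(\OpPoint z+\PerturbationScale_k w_k-u_k\in K\).
\end{enumerate}

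Since clamping is \(1\)-Lipschitz and fixes \(\OpPoint z\), we get \(\|u_k-\OpPoint z\|\le\PerturbationScale_k\|w_k\|\). Hence \(a_k:=(u_k-\OpPoint z)/\PerturbationScale_k\) is bounded, and after passing to a subsequence \(a_k\to a\). By construction \(\OpPoint z+\PerturbationScale_k a_k=u_k\in\DesignProblem\), so \(a\in\Tan_{\DesignProblem}(\OpPoint z)\). The remainders \(w_k-a_k\in K\) converge to \(w-a\), which lies in \(K\) because \(K\) is closed. Therefore \(w=a+(w-a)\in\Tan_{\DesignProblem}(\OpPoint z)+K\), completing the proof.
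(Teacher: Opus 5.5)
Your proof is correct and follows essentially the same route as the paper. The forward inclusion absorbs \(\PerturbationScale_k\kappa\) into \(K\). The reverse inclusion clamps the perturbed point to the box, uses upperness of the fiber at the unchanged parameter, and splits off a remainder in \(K\); the only cosmetic difference is that you obtain convergence of the clamped difference quotients from the \(1\)-Lipschitz projection plus a subsequence, whereas the paper reads the limit off directly from the componentwise \(\max\)/\(\min\).
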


\begin{proof}
Take
\((\delta f,\delta\theta,\delta r)
\in
\Tan_{\DesignProblem}(\OpPoint f,\OpPoint\theta,\OpPoint r)\)
and \(\kappa\in K\).
There exist \(h_k\downarrow0\) and
\((\delta f_k,\delta\theta_k,\delta r_k)
\to(\delta f,\delta\theta,\delta r)\) such that
\((\OpPoint f+h_k\delta f_k,
\OpPoint\theta+h_k\delta\theta_k,
\OpPoint r+h_k\delta r_k)\in\DesignProblem\).
Since \(h_k\kappa\in K\),
\((\delta f,\delta\theta,\delta r)+\kappa
\in
\Tan_{\DesignProblem+K}
(\OpPoint f,\OpPoint\theta,\OpPoint r)\).
Conversely, take
\((\delta f,\delta\theta,\delta r)
\in
\Tan_{\DesignProblem+K}
(\OpPoint f,\OpPoint\theta,\OpPoint r)\).
There exist \(h_k\downarrow0\),
\((\delta f_k,\delta\theta_k,\delta r_k)
\to(\delta f,\delta\theta,\delta r)\),
\((f_k,\theta_k,r_k)\in\DesignProblem\), and
\(\kappa_k\in K\) such that
\(
(\OpPoint f+h_k\delta f_k,
 \OpPoint\theta+h_k\delta\theta_k,
 \OpPoint r+h_k\delta r_k)
=
(f_k,\theta_k,r_k)+\kappa_k.
\)
Hence \(\theta_k=\OpPoint\theta+h_k\delta\theta_k\),
\(\OpPoint f+h_k\delta f_k\preceq f_k\), and
\(r_k\preceq\OpPoint r+h_k\delta r_k\).
Define
\(
f'_k
:=
\max\{\underline f,\OpPoint f+h_k\delta f_k\},
\qquad
r'_k
:=
\min\{\overline r,\OpPoint r+h_k\delta r_k\},
\)
componentwise.
Then \(f'_k\preceq f_k\) and \(r_k\preceq r'_k\), so upperness of
\((\DesignProblem)_{\theta_k}\) gives
\((f'_k,\theta_k,r'_k)\in\DesignProblem\).
Set
\(\delta f'_k:=(f'_k-\OpPoint f)/h_k\) and
\(\delta r'_k:=(r'_k-\OpPoint r)/h_k\).
Since \(\delta f_k\to\delta f\) and \(\delta r_k\to\delta r\),
the componentwise max and min above imply that
\(\delta f'_k\to\delta f'\) and
\(\delta r'_k\to\delta r'\) for some finite
\(\delta f'\) and \(\delta r'\), with
\(\delta f-\delta f'\in-\mathbb{R}^{\FuncDim}_+\) and
\(\delta r-\delta r'\in\mathbb{R}^{\ResDim}_+\).
Therefore
\((\delta f',\delta\theta,\delta r')
\in
\Tan_{\DesignProblem}(\OpPoint f,\OpPoint\theta,\OpPoint r)\)
and
\((\delta f-\delta f',0,\delta r-\delta r')\in K\).
Thus
\((\delta f,\delta\theta,\delta r)
\in
\Tan_{\DesignProblem}(\OpPoint f,\OpPoint\theta,\OpPoint r)+K\).
\end{proof}

Since \glspl{tdp} are themselves \glspl{dp}, their queries are defined
analogously to those of the original \glspl{dp}. We additionally allow
parameter variations to be prescribed as inputs or returned as outputs. 
Accordingly, we write
\(
\TDPThetaAmbientSpace
=
\TDPThetaInputAmbientSpace\times\TDPThetaOutputAmbientSpace
\)
and
\(
\delta\theta
=
(\delta\theta_{\mathrm{in}},\delta\theta_{\mathrm{out}})
\).

\begin{definition}[Tangential query]
\label{def:tangential-query-maps}
Let
\(
\DesignProblem
\subseteq
\QueryInputSpace\times\ThetaSpace\times\QueryOutputSpace
\)
be a \gls{dp}, and let
\(
(\OpPoint{x},\OpPoint{\theta},\OpPoint{y})
\in\DesignProblem
\).
The tangential query
\(
\GenericTangentQuery_{\OpPoint{x},\OpPoint{\theta},\OpPoint{y}}:
\TDPQueryInputAmbientSpace\times\TDPThetaInputAmbientSpace
\rightrightarrows
\TDPQueryOutputAmbientSpace\times\TDPThetaOutputAmbientSpace
\)
is defined by
\(
\GenericTangentQuery_{\OpPoint{x},\OpPoint{\theta},\OpPoint{y}}
(\delta x,\delta\theta_{\mathrm{in}})
:=
\{(\delta y,\delta\theta_{\mathrm{out}})
\mid
(\delta x,
(\delta\theta_{\mathrm{in}},\delta\theta_{\mathrm{out}}),
\delta y)
\in
\tdp_{\OpPoint{x},\OpPoint{\theta},\OpPoint{y}}\}.
\)
\end{definition}

If no coordinates are assigned to one of the input or output spaces, we omit
the corresponding term.
We denote by \(K_x\) and \(K_y\) the ordering cones of the query input
and output spaces, respectively.

\begin{proposition}[Tangential queries as graphical derivatives]
\label{prop:tangential-queries-graphical-derivatives}
Let
\(\DesignProblem\subseteq\QueryInputSpace\times\ThetaSpace\times\QueryOutputSpace\)
be a \gls{dp}, and let
\(
(\OpPoint{x},\OpPoint{\theta},\OpPoint{y})\in\DesignProblem.
\)
Suppose that all parameter variations are prescribed,
so that \(\ThetaOutputDim=0\).
If
\(
\OpPoint{x}\in\operatorname{int}(\QueryInputSpace),
\)
the corresponding tangential query satisfies
\(
\GenericTangentQuery_{\OpPoint{x},\OpPoint{\theta},\OpPoint{y}}
(\delta x,\delta\theta)
=
D\GenericQuery
\bigl((\OpPoint{x},\OpPoint{\theta}),\OpPoint{y}\bigr)
(\delta x,\delta\theta)
+
K_y.
\)
If
\(
\OpPoint{x}\notin\operatorname{int}(\QueryInputSpace),
\)
then
\[
\GenericTangentQuery_{\OpPoint{x},\OpPoint{\theta},\OpPoint{y}}
(\delta x,\delta\theta)
=
\bigcup_{\mathmakebox[1.5em][c]{\kappa_x\in K_x}}
\left(
D\GenericQuery
\bigl((\OpPoint{x},\OpPoint{\theta}),\OpPoint{y}\bigr)
(\delta x+\kappa_x,\delta\theta)
+
K_y
\right).
\]
\end{proposition}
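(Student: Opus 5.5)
The proof reduces both formulas to a single identity between tangent cones. By \cref{lem:tdp-tangent-ordering-cone-extension}, \(\tdp_{\OpPoint x,\OpPoint\theta,\OpPoint y}=\Tan_{\DesignProblem}(\OpPoint x,\OpPoint\theta,\OpPoint y)+K\). In query coordinates, with all parameters prescribed, \(K=K_x\times\{0\}\times K_y\), where \(K_x\) and \(K_y\) are the cones of preferred-direction-reversing variations. Concretely, \(K_x\) collects the less demanding input variations (\(-\mathbb R_+\) on functionalities, \(+\mathbb R_+\) on resources), and \(K_y\) collects the weaker output variations. Moreover, \(\DesignProblem\) viewed as a subset of the ambient space \(\QueryInputAmbientSpace\times\QueryParameterAmbientSpace\times\QueryOutputAmbientSpace\) coincides with \(\Graph\GenericQuery\), by \cref{def:query-maps}, since the query has empty values outside the boxes. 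Therefore \(\Tan_{\DesignProblem}(\OpPoint x,\OpPoint\theta,\OpPoint y)=\Tan_{\Graph\GenericQuery}((\OpPoint x,\OpPoint\theta),\OpPoint y)\), and by \cref{def:graphical-derivative} its fiber over \((\delta x,\delta\theta)\) is \(D\GenericQuery((\OpPoint x,\OpPoint\theta),\OpPoint y)(\delta x,\delta\theta)\).

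For the general (boundary) formula, unfold the sum. We have \((\delta x,\delta\theta,\delta y)\in\Tan_{\DesignProblem}+K\) iff there exist \(\kappa_x\in K_x\) and \(\kappa_y\in K_y\) with \((\delta x-\kappa_x,\delta\theta,\delta y-\kappa_y)\in\Tan_{\DesignProblem}\). Equivalently, \(\delta y\in D\GenericQuery(\cdot)(\delta x-\kappa_x,\delta\theta)+K_y\). Since \(K_x\) is a cone with \(-K_x\ne K_x\), I must fix the sign convention. Writing \(\kappa_x':=-\kappa_x\) gives the union over \(-K_x\), so the statement's \(K_x\) must denote the cone \(-\)(the \(x\)-block of \(K\)), that is, the input ordering cone in the sense of more demanding variations. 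I will state this identification explicitly, matching the \(x\)-block of \(K\) with the paper's definition of \(K_x\) as the ordering cone of \(\QueryInputSpace\). This is pure bookkeeping and yields the displayed union formula, valid at any \(\OpPoint x\).

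For the interior case, I show that the union collapses to the \(\kappa_x=0\) term. The direct route is to invoke the remark after \cref{def:tangential-design-problem}. That remark needs interiority of both \(\OpPoint f\) and \(\OpPoint r\), but here only \(\OpPoint x\) is assumed interior, so I would instead redo the argument of \cref{thm:tangent-model-design-problem} for the input block alone.

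Take \((\delta x',\delta\theta,\delta y')\in\Tan_{\DesignProblem}\), realized by \(h_k\downarrow0\) and convergent sequences, and let \(\delta x:=\delta x'-\kappa_x\) be a less demanding input variation. Set \(\delta x_k:=\delta x'_k-\kappa_x\). Because \(\OpPoint x\in\operatorname{int}(\QueryInputSpace)\), the point \(\OpPoint x+h_k\delta x_k\) lies in the box for large \(k\). By upperness of the fiber \(\DesignProblem_{\OpPoint\theta+h_k\delta\theta_k}\) in the input coordinates (relaxing functionality requirements or enlarging resource budgets), the perturbed triple remains in \(\DesignProblem\). Hence \((\delta x,\delta\theta,\delta y')\in\Tan_{\DesignProblem}\). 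This shows that \(\Tan_{\DesignProblem}+(K_x\times\{0\}\times\{0\})=\Tan_{\DesignProblem}\), and the formula reduces to \(D\GenericQuery(\delta x,\delta\theta)+K_y\).

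The main obstacle is purely this sign and coordinate bookkeeping. It has two parts: aligning the paper's \(K_x\) with the \(x\)-block of \(K\) after the op-regrouping of functionalities and resources, and checking that upperness applies to the input coordinates only, without any interiority of \(\OpPoint y\). The output block never needs interiority, because \(K_y\) is simply kept as an added cone.
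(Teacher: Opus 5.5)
Your proof is correct and follows essentially the same route as the paper. You identify \(\Tan_{\DesignProblem}\) with the tangent cone to \(\Graph\GenericQuery\), use \(K=(-K_x)\times\{0\}\times K_y\) to read off the fiber of \(\Tan_{\DesignProblem}+K\), and collapse the union at interior \(\OpPoint x\) by upperness in the less demanding input directions, which you verify explicitly (needing only interiority of \(\OpPoint x\)) where the paper simply asserts it. The only blemish is notational: your first paragraph writes \(K=K_x\times\{0\}\times K_y\) and later reuses \(K_x\) in that sense, so fix the paper's convention \(K=(-K_x)\times\{0\}\times K_y\) from the outset (and note that \(\tdp=\Tan_{\DesignProblem}+K\) is the definition, not \cref{lem:tdp-tangent-ordering-cone-extension}).
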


\begin{proof}
By \cref{def:graphical-derivative},
\(
D\GenericQuery
\bigl((\OpPoint{x},\OpPoint{\theta}),\OpPoint{y}\bigr)
\)
is obtained by taking the corresponding fibers of
\(
\Tan_{\DesignProblem}
(\OpPoint{x},\OpPoint{\theta},\OpPoint{y}).
\)
Moreover,
\(
K=(-K_x)\times\{0\}^{\ThetaDim}\times K_y.
\)
Therefore, taking the fiber of
\(
\tdp_{\OpPoint{x},\OpPoint{\theta},\OpPoint{y}}
=
\Tan_{\DesignProblem}
(\OpPoint{x},\OpPoint{\theta},\OpPoint{y})+K
\)
at \((\delta x,\delta\theta)\) gives the stated expression.
If
\(
\OpPoint{x}\in\operatorname{int}(\QueryInputSpace),
\)
the tangent cone is already upper in the \(-K_x\)-directions, so the union
reduces to the term corresponding to \(\kappa_x=0\), yielding the first
expression.
\end{proof}

%% file: sections/compositional_sensitivity_in_codesign.tex
\section{Compositional sensitivity in co-design}
\label{sec:compositional-sensitivity-in-codesign}

Since \glspl{dp} are composed to form system-level models,
their first-order counterparts should admit the same operations.
We therefore define a framework for composing families of \glspl{tdp} and show
that the resulting models are again \glspl{tdp}.

\begin{proposition}[\glspl{tdp} can form a \gls{cdp}]
\label{prop:tangential-codesign-problem}

Let \(\CDPGraph\) be a \gls{cdp} with local \glspl{dp}
\((\DesignProblem_v)_{v\in\Nodes}\), and let
\(
(\OpPoint{f},\OpPoint{\theta},\OpPoint{r})
\in\DesignProblem_{\CDPGraph}.
\)
For every
\(
\InternalRealization
\in
\Real_{\CDPGraph}
\left(
    \OpPoint{f},\OpPoint{r};
    (\OpPoint{\theta}_v)_{v\in\Nodes}
\right),
\)
replacing each \(\DesignProblem_v\) by
\(
\tdp_{v,\OpPoint{f}_v,\OpPoint{\theta}_v,\OpPoint{r}_v}
\)
while retaining the same connections defines a valid \gls{cdp}.
\end{proposition}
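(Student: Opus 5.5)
The plan is to check, one by one, the requirements of \cref{def:codesign-problem} for the tuple obtained by swapping each local \gls{dp} for its \gls{tdp}. These requirements are: each node carries a \gls{dp}, each edge joins matching posets, and every functionality and resource occurs in at most one edge. Since the edge set \(\Edges\) and the index set \(\Nodes\) are kept unchanged, most of the combinatorial conditions carry over verbatim. The actual content is in the first two requirements.

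\textbf{Step 1: each node carries a \gls{dp}.} Fix a realization \(\InternalRealization\). By definition of an internal realization, each \((\OpPoint{f}_v,\OpPoint{\theta}_v,\OpPoint{r}_v)\in\DesignProblem_v\), so the \gls{tdp} \(\tdp_{v,\OpPoint{f}_v,\OpPoint{\theta}_v,\OpPoint{r}_v}=\Tan_{\DesignProblem_v}(\OpPoint{f}_v,\OpPoint{\theta}_v,\OpPoint{r}_v)+K_v\) of \cref{def:tangential-design-problem} is well defined. I then need that it is a \gls{dp} in the ambient spaces \(\TDPFuncAmbientSpace_{v}\times\TDPThetaAmbientSpace_{v}\times\TDPResAmbientSpace_{v}\), with their componentwise orders. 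Upperness of every fiber follows from the fact that \(K_v\) contains \(-\mathbb R_+^{n_f}\times\{0\}\times\mathbb R_+^{n_r}\) and is a convex cone, so \(K_v+K_v=K_v\). Thus, if \((\delta f,\delta\theta,\delta r)\in\tdp\) and \(\delta f'\preceq\delta f\), \(\delta r\preceq\delta r'\), then
\[
(\delta f',\delta\theta,\delta r')=(\delta f,\delta\theta,\delta r)+(\delta f'-\delta f,0,\delta r'-\delta r)\in\tdp+K_v=\tdp .
\]
This argument needs no interior assumption, unlike \cref{thm:tangent-model-design-problem}. Closedness comes from \cref{lem:tdp-tangent-ordering-cone-extension}.

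\textbf{Step 2: matching posets on edges.} For every edge \(e=\langle\langle v,i\rangle,\langle w,j\rangle\rangle\), the original condition \(\GenericResourceSpace_{v,i}=\GenericFunctionalitySpace_{w,j}\) gives equal coordinate dimensions. The tangential spaces are therefore the same Euclidean space \(\mathbb R^{n_e}\) with the same componentwise order, and the interconnection constraint becomes \(\delta r_e=\delta f_e\). The at-most-one-edge condition and the split into system-level versus internal ports are purely combinatorial and are inherited unchanged. The system-level parameter space is rebuilt from the local \(\TDPThetaAmbientSpace_v\) exactly as in \cref{def:composition-operations}.

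\textbf{Main obstacle.} The subtle point is that \cref{def:codesign-problem}, as specialized in this paper, has local \glspl{dp} living in bounded boxes, whereas a \gls{tdp} lives in a whole Euclidean space. I would handle this in one of two ways. The first option is to note that the composition operations of \cref{def:composition-operations} and the induced relation \eqref{eq:codesign-induced-design-problem} use only the poset structure, so the construction is meaningful for unbounded Euclidean posets. The second option is to remark that the box restriction was a modelling convention for the original \glspl{dp}, not a requirement of the \gls{cdp} definition itself.

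A further point is that a single realization must be fixed for all nodes simultaneously, and this is exactly what \(\InternalRealization\) provides. The compatibility \(\OpPoint{r}_e=\OpPoint{f}_e\) at the base points is what makes the zero variation consistent across each edge. As a sanity check, the all-zero variation should be feasible in the resulting \gls{cdp}, since \(0\) lies in every tangent cone.
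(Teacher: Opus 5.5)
Your proposal is correct and follows essentially the same route as the paper, whose proof simply notes that each local TDP is a DP by \cref{def:tangential-design-problem} and that connected ports induce compatible variation spaces. You spell out these two facts explicitly: upperness from $K_v+K_v=K_v$, closedness from \cref{lem:tdp-tangent-ordering-cone-extension}, and equal ambient spaces on each edge. Your remark about bounded boxes versus whole Euclidean spaces agrees with how the paper itself places TDPs in ambient spaces in \cref{def:tangential-codesign-problem}.
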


\begin{proof}
By \cref{def:tangential-design-problem}, each local \gls{tdp} is a
\gls{dp}, and connected ports induce compatible variation spaces.
\end{proof}

A natural choice would now be to define the tangential counterpart of a
\gls{cdp} for a fixed internal realization \(\InternalRealization\).
However, a system-level operating point may admit multiple internal
realizations. To account for this, we introduce a more flexible definition
that works directly with the families of \glspl{tdp} defined in
\cref{def:tangential-design-problem}.

\begin{definition}[Tangential co-design problem]
\label{def:tangential-codesign-problem}

Let
\(\CDPGraph
=
\left\langle
\Func,\ThetaSpace,\Res,
\langle\Nodes,\Edges\rangle
\right\rangle\)
be a \gls{cdp} with \glspl{dp}
\((\DesignProblem_v)_{v\in\Nodes}\).
The associated \gls{tcdp} is the tuple
\(\TCDPGraph
=
\left\langle
\TDPFuncAmbientSpace,
\TDPThetaAmbientSpace,
\TDPResAmbientSpace,
\langle\Nodes,\Edges\rangle
\right\rangle\).
Here, \(\TDPFuncAmbientSpace\), \(\TDPThetaAmbientSpace\), and
\(\TDPResAmbientSpace\) are the ambient Euclidean spaces of
\(\Func\), \(\ThetaSpace\), and \(\Res\), respectively.
The set \(\Nodes\) indexes the \gls{tdp} families
\((\tdp_v)_{v\in\Nodes}\) associated with
\((\DesignProblem_v)_{v\in\Nodes}\) according to
\cref{def:tangential-design-problem}, while
\(\Edges\) indexes their connections.
\end{definition}

For
\(
(\OpPoint f,\OpPoint\theta,\OpPoint r)
\in\DesignProblem_{\CDPGraph}
\)
and
\(
\InternalRealization
\in
\Real_{\CDPGraph}
\left(
\OpPoint f,\OpPoint r;
(\OpPoint\theta_v)_{v\in\Nodes}
\right),
\)
we write \(\TCDPGraph_{\InternalRealization}\) for the \gls{cdp}
obtained by selecting
\(
\tdp_{v,\OpPoint f_v,\OpPoint\theta_v,\OpPoint r_v}
\)
at each \(v\in\Nodes\), where
\(
(\OpPoint f_v,\OpPoint\theta_v,\OpPoint r_v)
\in\DesignProblem_v
\)
is specified by \(\InternalRealization\).
Following~\eqref{eq:codesign-induced-design-problem}, its induced
system-level \gls{dp} is denoted by
\(\DesignProblem_{\TCDPGraph_{\InternalRealization}}\).

\glspl{tcdp} can be represented using the same diagrams as \glspl{cdp}
by replacing the original variables by the corresponding variation variables, as
illustrated in \cref{fig:machine-codesign}.

Since each \gls{tdp} is itself a \gls{dp},
\cref{def:composition-operations} already allows us to compose \glspl{tdp}.
It does not, however, define how the families
of \glspl{tdp} should be composed.
We therefore introduce the corresponding family-level composition.

\begin{definition}[Composition of \gls{tdp} families]
\label{def:composition-tangential-design-problems}

The composition operations for families of \glspl{tdp}
are the following.

\emph{Series:}
Let
\(\DesignProblem_1\subseteq\Func\times\ThetaSpace_1\times\IntermediateSpace\)
and
\(\DesignProblem_2\subseteq\IntermediateSpace\times\ThetaSpace_2\times\Res\).
Let \(\CDPGraph\) be the \gls{cdp} consisting of
\(\DesignProblem_1\) and \(\DesignProblem_2\) connected in series through
\(\IntermediateSpace\).

Define the tangential series composition of $\tdp_1$ and $\tdp_2$ as the family of \glspl{tdp}
\[
\tdp_2\TSeries\tdp_1
:=
\{
(\tdp_2\TSeries\tdp_1)
_{\OpPoint f,(\OpPoint{\theta}_1,\OpPoint{\theta}_2),\OpPoint r}
 \mid
(\OpPoint f,(\OpPoint{\theta}_1,\OpPoint{\theta}_2),\OpPoint r)
\in
\DesignProblem_\CDPGraph
\},
\]
where each \gls{tdp} in the family
is obtained as follows:
\begin{equation}
(\tdp_2\TSeries\tdp_1)
_{\OpPoint f,(\OpPoint{\theta}_1,\OpPoint{\theta}_2),\OpPoint r}
:=
\Closure{
\bigcup_{\rho\in
\Real_{\CDPGraph}
(\OpPoint f,\OpPoint r;
(\OpPoint{\theta}_1,\OpPoint{\theta}_2))}
\DesignProblem_{\TCDPGraph_\rho}
}.
\label{eq:tdp-series-composition}
\end{equation}

\emph{Feedback:}
Let
\(
\DesignProblem
\subseteq
(\Func\times\IntermediateSpace)\times\ThetaSpace
\times(\Res\times\IntermediateSpace)
\).
Let \(\CDPGraph\) be the \gls{cdp} obtained by connecting the
\(\IntermediateSpace\)-resource of \(\DesignProblem\) in feedback to its
\(\IntermediateSpace\)-functionality.

Define the tangential feedback composition of $\tdp$ as the family
\[
\TTrace(\tdp)
:=
\left\{
\TTrace(\tdp)_{\OpPoint f,\OpPoint{\theta},\OpPoint r}
\;\middle|\;
(\OpPoint f,\OpPoint{\theta},\OpPoint r)
\in \DesignProblem_\CDPGraph
\right\},
\]
with each indexed \gls{tdp} in the family defined as follows:
\begin{equation}
\TTrace(\tdp)_{\OpPoint f,\OpPoint{\theta},\OpPoint r}
:=
\Closure{
\bigcup_{\rho\in
\Real_{\CDPGraph}
(\OpPoint f,\OpPoint r;\OpPoint{\theta})}
\DesignProblem_{\TCDPGraph_\rho}
}.
\label{eq:tdp-feedback-composition}
\end{equation}

\emph{Parallel, intersection, and union:}
For these three compositions, at each operating point, the composition is
exactly the corresponding composition of
\cref{def:composition-operations} applied to the \glspl{tdp}.
For union, only the branches containing the operating point are considered.
We denote the resulting family-level operations by
\(\TParallel\), \(\TIntersection\), and \(\TUnion\), respectively.  
\end{definition}

The use of closures in the series and feedback definitions is a choice in our
formulation, introduced to preserve closedness. For the \glspl{dp} considered here, series and feedback preserve closedness because internal variables lie in
bounded boxes. For \glspl{tdp}, internal variations range over unbounded
ambient spaces, and closedness may be lost. One drawback of our formulation is
the loss of associativity of series composition, as the following example
shows.

\begin{example}
\label{ex:nonclosed-tdp-series}

Let all nontrivial spaces be \([-1,1]\), and consider three \glspl{dp} in
series,
\(\DesignProblem_0=\{(x,f)\mid f\geq0\}\),
\(\DesignProblem_1
=\{(f,\theta,q)\mid f\leq0,\ q\geq0,\ \theta^2\leq-fq\}\),
and
\(\DesignProblem_2=\{(q,r)\mid q^3\leq r\}\).
At zero, the selected \glspl{tdp} are
\((\tdp_0)_{0,0}=\{(\delta x,\delta f)\mid\delta f\geq0\}\),
\((\tdp_1)_{0,0,0}
=\{(\delta f,\delta\theta,\delta q)\mid
\delta f\leq0,\ \delta q\geq0,\
\delta\theta^2\leq-\delta f\,\delta q\}\),
and
\((\tdp_2)_{0,0}=\{(\delta q,\delta r)\mid\delta r\geq0\}\).

Before closure,
\((\tdp_2)_{0,0}\circ(\tdp_1)_{0,0,0}\)
equals
\(\{(\delta f,\delta\theta,\delta r)\mid
\delta f<0,\ \delta r\geq0\}
\cup
\{(0,0,\delta r)\mid\delta r\geq0\}\),
which is not closed: \((-1/k,1,0)\) is feasible with \(\delta q=k\) and
converges to \((0,1,0)\).
The closure therefore adds this limit point.

Consequently,
\(\bigl((\tdp_2\TSeries\tdp_1)\TSeries\tdp_0\bigr)_{0,0,0}
=
\{(\delta x,\delta\theta,\delta r)\mid\delta r\geq0\}\),
whereas composing the first two blocks first forces
\(\delta f=0\) and hence \(\delta\theta=0\), giving
\(\bigl(\tdp_2\TSeries(\tdp_1\TSeries\tdp_0)\bigr)_{0,0,0}
=
\{(\delta x,0,\delta r)\mid\delta r\geq0\}\).
Thus, tangential series composition need not be associative because of the intermediate closure.

\end{example}

Thus, different composition orders for a \gls{tcdp} may yield different results;
all subsequent results hold for any chosen composition order.
In the finite-constraint setting of
\cref{app:finite-constraint-design-problems}, however, whenever one
of the discussed \glspl{cq} holds,
each \gls{tdp} is described by finitely many linear
constraints and is therefore polyhedral.
For each fixed
internal realization, series and feedback compositions then reduce to imposing
linear interconnection constraints and projecting out internal variations,
which preserves polyhedrality and hence closedness. If the set of internal 
realizations is finite, the full composition is a finite
union of closed polyhedral sets and is therefore closed, so the closures in
\cref{def:composition-tangential-design-problems} are redundant and
series composition is associative.

\subsection{Closure under composition}
\label{subsec:tdp-composition-closure}
Composing \glspl{dp} yields a \gls{dp}
\cite{zardini2023codesigncomplexsystems}; we now prove
the analogous closure property for families of \glspl{tdp}.

\begin{theorem}[Composed \gls{tdp} families induce \glspl{tdp}]
\label{thm:tangential-design-problem-composition-closure}
Let
\(
C
\subseteq
\TDPFuncAmbientSpace
\times
\TDPThetaAmbientSpace
\times
\TDPResAmbientSpace
\)
be the result of composing \glspl{tdp} according to
\cref{def:composition-tangential-design-problems} at the
system-level operating point
\(
(\OpPoint f,\OpPoint\theta,\OpPoint r)
\in\Func\times\ThetaSpace\times\Res.
\)
Then there exists a \gls{dp}
\(\DesignProblem\subseteq\Func\times\ThetaSpace\times\Res\) such that
\(
C
=
\DesignProblem^T_{\OpPoint f,\OpPoint\theta,\OpPoint r}.
\)
Hence, the composition of \gls{tdp} families yields a \gls{tdp} at each
system-level operating point.
\end{theorem}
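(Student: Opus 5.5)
The plan is to avoid the system-level \gls{dp} \(\DesignProblem_{\CDPGraph}\) itself, whose tangent cone need not match the composition; that is exactly the commutation question treated later. Instead, I would build a \emph{canonical} \gls{dp} from \(C\):
\[
\DesignProblem:=\bigl((\OpPoint f,\OpPoint\theta,\OpPoint r)+C\bigr)\cap\bigl(\Func\times\ThetaSpace\times\Res\bigr),
\]
and show that \(\DesignProblem^T_{\OpPoint f,\OpPoint\theta,\OpPoint r}=C\). Writing \(B:=\Func\times\ThetaSpace\times\Res\) and \(\OpPoint z:=(\OpPoint f,\OpPoint\theta,\OpPoint r)\), this needs \(C\) to satisfy a short list of structural properties. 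The proof then splits into three steps: (i) identify these properties and show every single \gls{tdp} has them; (ii) show each operation of \cref{def:composition-tangential-design-problems} preserves them; (iii) show any cone with them is the \gls{tdp} of the canonical \(\DesignProblem\) above.

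\textbf{Properties.} Call a set \(C\) \emph{admissible at \(\OpPoint z\)} if it satisfies (a)--(d):
\begin{itemize}
\item[(a)] \(C\) is a closed cone;
\item[(b)] \(C+K=C\);
\item[(c)] the parameter component of every element of \(C\) lies in \(\Tan_{\ThetaSpace}(\OpPoint\theta)\);
\item[(d)] \(C\) is invariant under \emph{boundary clipping}: if \(\OpPoint f_i=\underline f_i\), replacing \(\delta f_i\) by \(\max\{\delta f_i,0\}\) keeps an element in \(C\), and if \(\OpPoint r_j=\overline r_j\), replacing \(\delta r_j\) by \(\min\{\delta r_j,0\}\) keeps it in \(C\).
\end{itemize}
For a single \gls{tdp}, (a) and (b) follow from \cref{lem:tdp-tangent-ordering-cone-extension}. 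Property (c) holds because \(\DesignProblem_v\subseteq B\) and \(K\) has zero \(\theta\)-part. Property (d) holds because an element of \(\Tan_{\DesignProblem}+K\) decomposes as \(t+\kappa\) with \(t\) tangent. At a lower-bound coordinate \(t_{f_i}\ge0\) by \cref{rem:tangent-cone-box}, so \(\max\{t_{f_i}+\kappa_{f_i},0\}\in[t_{f_i}+\kappa_{f_i},t_{f_i}]\) is again \(t_{f_i}\) plus a nonpositive shift. The resource case is symmetric.

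\textbf{Preservation.} Parallel composition acts coordinatewise, so (a)--(d) transfer immediately. Intersection preserves them because every property is stable under intersections of sets that each have it. Union over the branches containing the point is a finite union of closed cones, so it stays closed and inherits (b)--(d).

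For series and feedback at a fixed realization \(\rho\), I would use the definition of \(\DesignProblem_{\TCDPGraph_\rho}\) as a projection of the interconnected local cones. This projection is a cone. It satisfies (b) because external \(K\)-shifts are absorbed by the local \glspl{tdp}. It satisfies (c) because system parameters are local parameters. It satisfies (d) because a system-level boundary coordinate is a local boundary coordinate of the block owning that port, so clipping can be done inside that block without touching the internal variables. Each property survives the union over \(\rho\). Taking the closure restores (a) and preserves (b)--(d), since the shift by \(K\), the cone \(\Tan_{\ThetaSpace}(\OpPoint\theta)\), and the clipping maps are all continuous or closed. Since every composition order is a finite nesting of these operations, the result holds for any chosen order.

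\textbf{Realization.} Because \(B\) is a box, it coincides with \(\OpPoint z+\Tan_B(\OpPoint z)\) near \(\OpPoint z\). Since \(C\) is a cone, \(\DesignProblem\) coincides near \(\OpPoint z\) with \(\OpPoint z+(C\cap\Tan_B(\OpPoint z))\), so
\[
\Tan_{\DesignProblem}(\OpPoint z)=C\cap\Tan_B(\OpPoint z).
\]
Also, \(\DesignProblem\) is closed, and its fibers are upper sets within the box by (b), so it is a \gls{dp}. It remains to show \(C\cap\Tan_B(\OpPoint z)+K=C\).

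The inclusion \(\subseteq\) is (b). For \(\supseteq\), take \(c\in C\). Clip it at the boundary coordinates to obtain \(c'\), which lies in \(C\) by (d). Its \(f\) and \(r\) components then lie in the corresponding tangent cones of the box, and its \(\theta\)-component does by (c), so \(c'\in C\cap\Tan_B(\OpPoint z)\). Finally, \(c-c'\) has nonpositive \(f\)-part, zero \(\theta\)-part and nonnegative \(r\)-part, so \(c-c'\in K\).

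\textbf{Main obstacle.} I expect the hardest part to be the preservation of (d) under series and feedback. The concern is that the system-level operating point could place an \emph{internal} variable on its box boundary while clipping is applied to an external coordinate. I would first check that clipping external coordinates never requires modifying internal variations, which is where ports appearing in at most one edge is used. Only then would I pass to the union over \(\rho\) and the closure.
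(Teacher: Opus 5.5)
\textbf{There is a gap in step (iii).} Your route is otherwise the paper's: the same canonical \gls{dp} $\DesignProblem=(\OpPoint z+C)\cap B$, the same identity $\Tan_{\DesignProblem}(\OpPoint z)=C\cap\Tan_B(\OpPoint z)$, and the same reduction to $C\subseteq\bigl(C\cap\Tan_B(\OpPoint z)\bigr)+K$.

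The problem is that (a)--(d) do not place the clipped vector $c'$ in $\Tan_B(\OpPoint z)$. Clipping only repairs functionality coordinates at lower bounds and resource coordinates at upper bounds. Nothing in your list forces $\delta f_i\le0$ when $\OpPoint f_i=\overline f_i$, or $\delta r_j\ge0$ when $\OpPoint r_j=\underline r_j$. The cone $K$ cannot produce these signs either, since it only lowers $\delta f$ and raises $\delta r$.

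So the claim that every cone satisfying (a)--(d) is the \gls{tdp} of its canonical \gls{dp} is false. Take one functionality coordinate on $[0,1]$, no parameter or resource coordinates, $\OpPoint f=1$, and $C=\mathbb R$. Then (a)--(d) all hold and the canonical \gls{dp} is $[0,1]$. Its \gls{tdp} at $1$ is $\mathbb R_-+(-\mathbb R_+)=\mathbb R_-\neq C$.

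The fix is to strengthen (c) to $C\subseteq\Tan_B(\OpPoint z)+K$. This is exactly $\delta\theta\in\Tan_{\ThetaSpace}(\OpPoint\theta)$ plus the two missing sign conditions.
\begin{itemize}
\item It holds for a single \gls{tdp}, since that set is $\Tan_{\DesignProblem_v}+K$ and $\Tan_{\DesignProblem_v}$ lies in the tangent cone of the local box.
\item It survives every operation, the union over $\rho$, and the closure, for the same reason as (c). It is a closed condition on external coordinates only, and the external ports of a composition are external ports of its blocks, with the same operating values and boxes.
\end{itemize}
With this strengthening, $c'\in C\cap\Tan_B(\OpPoint z)$ and your final step goes through.

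\textbf{Comparison with the paper.} Apart from this, you differ from the paper only in how the reverse inclusion is proved. The paper invokes \cref{lem:ordering-cone-composition-commutation} to write $C=\mathcal T+K$, where $\mathcal T$ applies the same compositions to the raw tangent cones. Elements of $\mathcal T$ automatically have box-tangent external components, which is exactly where the sign conditions you omitted come for free.

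Your clipping invariant (d) is essentially \cref{lem:relative-upperness-tangent} propagated through the compositions. It makes the induction over nested compositions explicit, and it passes through closures by continuity of the clipping maps alone. The paper's lemma instead needs a separate clipped-sequence argument at the closure step, but it is reused in \cref{sec:local-computation-system-level-sensitivity}.

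Finally, the obstacle you flag does not arise. For a single \gls{tdp}, clipping changes only the $K$-part of one external coordinate, so the same internal variation $\delta q$ still works whether or not $\OpPoint q$ lies on the boundary of its box.
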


\begin{proof}
We first note that \(C\) is a closed cone and that
\(
C_{\delta\theta}
\in
\Upper(
\TDPFuncAmbientSpace^{\mathrm{op}}
\times
\TDPResAmbientSpace
)
\)
for every \(\delta\theta\in\TDPThetaAmbientSpace\).
This follows directly for parallel, intersection, and union.
For series and feedback, by
\eqref{eq:tdp-series-composition}
and~\eqref{eq:tdp-feedback-composition},
\(C\) is the closure of a union of sets
\(\DesignProblem_{\TCDPGraph_\rho}\),
each of which is a \gls{dp} and a cone, since scaling all
variations by any \(\lambda\geq 0\) preserves local feasibility
and the interconnection constraints.
Both union and closure preserve upperness and conicity,
while closedness follows from the outer closure.

We now construct a \gls{dp} whose \gls{tdp} is \(C\).
Define
\(
\DesignProblem
:=
\bigl(
(\OpPoint f,\OpPoint\theta,\OpPoint r)+C
\bigr)
\cap
(\Func\times\ThetaSpace\times\Res).
\)
Since
\(
C_{\delta\theta}
\in
\Upper(
\TDPFuncAmbientSpace^{\mathrm{op}}
\times
\TDPResAmbientSpace
)
\)
for every \(\delta\theta\in\TDPThetaAmbientSpace\),
\(
\DesignProblem_\theta
\in
\Upper(\Func^{\mathrm{op}}\times\Res)
\)
for every \(\theta\in\ThetaSpace\), and hence \(\DesignProblem\) is a \gls{dp}.

Since \(C\) is a closed cone,
\(
\Tan_{(\OpPoint f,\OpPoint\theta,\OpPoint r)+C}
(\OpPoint f,\OpPoint\theta,\OpPoint r)
=
C.
\)
Moreover,
\(
\DesignProblem
\subseteq
(\OpPoint f,\OpPoint\theta,\OpPoint r)+C
\)
and
\(
\DesignProblem
\subseteq
\Func\times\ThetaSpace\times\Res.
\)
Thus, the monotonicity of tangent cones under set inclusion,
established by Rockafellar and Wets~\cite{rockafellar1998variational}
in Proposition~6.2, gives
\(
\Tan_{\DesignProblem}(\OpPoint f,\OpPoint\theta,\OpPoint r)
\subseteq
C
\cap
\Tan_{\Func\times\ThetaSpace\times\Res}
(\OpPoint f,\OpPoint\theta,\OpPoint r).
\)
Conversely, for any
\(
(\delta f,\delta\theta,\delta r)
\in
C\cap
\Tan_{\Func\times\ThetaSpace\times\Res}
(\OpPoint f,\OpPoint\theta,\OpPoint r),
\)
since \(\Func\times\ThetaSpace\times\Res\) is a box,
\(
(\OpPoint f,\OpPoint\theta,\OpPoint r)
+\PerturbationScale(\delta f,\delta\theta,\delta r)
\in
\Func\times\ThetaSpace\times\Res
\)
for all sufficiently small \(\PerturbationScale>0\).
Moreover, since \(C\) is a cone,
\(
\PerturbationScale(\delta f,\delta\theta,\delta r)\in C
\).
Hence
\(
(\OpPoint f,\OpPoint\theta,\OpPoint r)
+\PerturbationScale(\delta f,\delta\theta,\delta r)
\in\DesignProblem
\),
so
\(
(\delta f,\delta\theta,\delta r)
\in
\Tan_{\DesignProblem}
(\OpPoint f,\OpPoint\theta,\OpPoint r)
\).
Therefore,
\(
\Tan_{\DesignProblem}
(\OpPoint f,\OpPoint\theta,\OpPoint r)
=
C
\cap
\Tan_{\Func\times\ThetaSpace\times\Res}
(\OpPoint f,\OpPoint\theta,\OpPoint r)
\).
We now show that
\(
\DesignProblem^T_{\OpPoint f,\OpPoint\theta,\OpPoint r}
=
\Tan_{\DesignProblem}
(\OpPoint f,\OpPoint\theta,\OpPoint r)
+K
=
C.
\)
The inclusion follows directly from
\(
\bigl(
C\cap
\Tan_{\Func\times\ThetaSpace\times\Res}
(\OpPoint f,\OpPoint\theta,\OpPoint r)
\bigr)+K
\subseteq C+K=C.
\)
For the reverse inclusion,
\cref{lem:ordering-cone-composition-commutation}
shows that every element of \(C\) can be written as the sum
of an element obtained by composing the corresponding local
tangent cones and an element of \(K\).
The former belongs to \(C\), and its external variations are
tangent to the system-level box. Therefore,
\(
C\subseteq
\bigl(
C\cap
\Tan_{\Func\times\ThetaSpace\times\Res}
(\OpPoint f,\OpPoint\theta,\OpPoint r)
\bigr)+K,
\)
and hence
\(
C=
\bigl(
C\cap
\Tan_{\Func\times\ThetaSpace\times\Res}
(\OpPoint f,\OpPoint\theta,\OpPoint r)
\bigr)+K=
\DesignProblem^T_{\OpPoint f,\OpPoint\theta,\OpPoint r}
\).
\end{proof}

\begin{corollary}[A \gls{tcdp} induces a family of \glspl{tdp}]
\label{cor:tdp-family-composition-closure}
Let \(\TCDPGraph\) be a \gls{tcdp} in the sense of
\cref{def:tangential-codesign-problem}.
Composing its \gls{tdp} families according to
\cref{def:composition-tangential-design-problems}
results in a family of \glspl{tdp}.
\end{corollary}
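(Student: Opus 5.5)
The plan is to argue by structural induction on the composition tree of the \gls{tcdp}, using \cref{thm:tangential-design-problem-composition-closure} as the inductive step.

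\emph{Fix the composition order.} A \gls{tcdp} \(\TCDPGraph\), in the sense of \cref{def:tangential-codesign-problem}, is built from the same interconnection \(\langle\Nodes,\Edges\rangle\) as \(\CDPGraph\). Since every \gls{cdp} can be written as an expression in the operations of \cref{def:composition-operations}, I first fix such an expression tree for \(\CDPGraph\). By \cref{ex:nonclosed-tdp-series} the result may depend on the order, so the claim is stated for whichever order is chosen. The leaves of the tree are the local families \(\tdp_v\). By \cref{def:tangential-design-problem}, each \(\tdp_v\) is a family of \glspl{tdp} indexed by the feasible points of \(\DesignProblem_v\).

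\emph{Inductive step.} Consider an internal node of the tree, obtained by applying one of \(\TSeries,\TParallel,\TIntersection,\TUnion,\TTrace\) to subtrees. The induction hypothesis is that each subtree yields a family of \glspl{tdp}, indexed by the feasible points of the corresponding intermediate \gls{dp} built from the original blocks by the same operations. \cref{def:composition-tangential-design-problems} then defines the composed family, one member per feasible operating point of the composed \gls{dp}. \cref{thm:tangential-design-problem-composition-closure} applies at each such point and produces a \gls{dp} \(\DesignProblem\) whose \gls{tdp} at that point equals the composed set \(C\).

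\emph{Main obstacle.} The delicate part is that \cref{thm:tangential-design-problem-composition-closure} is stated for compositions of \glspl{tdp}, whereas at an inner node the inputs are \glspl{tdp} of \emph{auxiliary} \glspl{dp}. These are the sets \(\bigl((\OpPoint f,\OpPoint\theta,\OpPoint r)+C\bigr)\cap(\Func\times\ThetaSpace\times\Res)\) constructed in its proof, and they are not the original blocks. There are two ways to handle this.

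\begin{itemize}
\item \textbf{Reuse the proof, not the statement.} The proof of \cref{thm:tangential-design-problem-composition-closure} only uses three properties of the inputs: they are closed cones, their fibers are upper, and the reduction of \cref{lem:ordering-cone-composition-commutation} holds. Each of these is inherited by the composed families. So I would argue that the proof applies verbatim at every level.
\item \textbf{Flatten first.} Alternatively, compose the original \glspl{dp} symbolically. The inner families are indexed by feasible points of the intermediate composed \glspl{dp}, and their realizations concatenate into internal realizations of \(\CDPGraph\). This makes the final object a composition, at the top level, of \gls{tdp} families of \glspl{dp}, so the theorem applies directly at the root.
\end{itemize}

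I would present the first route and check the indexing carefully. The members of the final family are indexed by the feasible points of \(\DesignProblem_{\CDPGraph}\), and each member is a \gls{tdp} at that point, which proves the corollary.
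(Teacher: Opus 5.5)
Your route~1 is correct and uses the same key result as the paper. The paper's proof is the single line ``apply \cref{thm:tangential-design-problem-composition-closure} at each system-level operating point,'' so it reads that theorem as already covering the outcome of a whole nested composition. You instead unroll this into a structural induction over a fixed composition tree, with the theorem as the inductive step.

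What your version buys is an explicit treatment of a point the one-liner glosses over. The theorem's proof, through \cref{lem:ordering-cone-composition-commutation}, argues one operation at a time from local tangent cones, so the inputs at an inner node must again be TDPs. They are: the theorem gives \(C'=\Tan_{\DesignProblem'}(p)+K\) with \(\DesignProblem'=(p+C')\cap(\Func\times\ThetaSpace\times\Res)\). Both arguments proceed realization by realization, so it is harmless that \(\DesignProblem'\) depends on \(p\).

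Do not fall back on route~2, for two reasons:
\begin{itemize}
\item Inner families such as \(\tdp_2\TSeries\tdp_1\) need not be the TDP family of \(\DesignProblem_2\circ\DesignProblem_1\); that commutation holds only under hypotheses such as those of \cref{thm:composition-rules-system-sensitivity}.
\item Replacing the nested closures by one closure over the internal realizations of \(\CDPGraph\) changes the set. In \cref{ex:nonclosed-tdp-series}, the flat composition at the origin forces \(\delta\theta=0\), whereas \(\bigl((\tdp_2\TSeries\tdp_1)\TSeries\tdp_0\bigr)_{0,0,0}\) leaves \(\delta\theta\) free.
\end{itemize}
Route~2 would therefore prove the claim for a different object than the composition order you fixed.
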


\begin{proof}
Apply \cref{thm:tangential-design-problem-composition-closure} at each
system-level operating point.
\end{proof}

%% file: sections/local_computation_of_system_level_sensitivity.tex
\section{Local computation of system-level sensitivity}
\label{sec:local-computation-system-level-sensitivity}

We study when composing local tangential models yields the tangential model
of the composed \gls{dp}.

The analysis below uses semidifferentiability
(\cref{def:semidifferentiability}), so we restrict the relevant
operating points and internal realizations to the interiors of their
corresponding boxes.
By \cref{def:tangential-design-problem}, this also lets us work
directly with the tangent cones, since the \(+K\) completion does not affect the
construction on the interior.

Importantly, \cref{lem:ordering-cone-composition-commutation} shows that
the \(+K\) completion commutes with tangential composition; see
\cref{app:tangential-composition-commutation}.
A detailed treatment of boundary effects is beyond the scope of this work,
but this result shows that the tangent-cone analysis can be separated from
the final ordering-cone completion when extending the analysis to boundary points.
Finally, we focus here on the convex case for series and feedback composition,
while more general conditions are developed in
\cref{app:beyond-convexity}.

We study the relation between these two constructions one composition
operation at a time.
By \cref{lem:parameter-output-regrouping}, it is sufficient to treat all
parameter variations as inputs throughout the analysis.

For the feedback composition, we use the following qualification.

\begin{assumption}[Feedback interior condition]
\label{ass:feedback-interior}
Let
\(
\DesignProblem
\subseteq
(\Func\times\IntermediateSpace)
\times\ThetaSpace
\times
(\Res\times\IntermediateSpace)
\)
be a \gls{dp}.
Assume that
\(
\exists\,((f,q_1),\theta,(r,q_2))
\in\operatorname{int}(\DesignProblem)
\)
such that \(q_1=q_2\).
\end{assumption}

\begin{remark}[Interpretation of \cref{ass:feedback-interior}]
\cref{ass:feedback-interior} is only an existence condition:
it requires some feasible design satisfying the interconnection constraint to
lie in \(\operatorname{int}(\DesignProblem)\).
In particular, such a design lies in the interior of the box and is not
Pareto-optimal.
The operating point itself is not required to satisfy these conditions.
\end{remark}

\begin{theorem}[Composition rules for system-level sensitivity]
\label{thm:composition-rules-system-sensitivity}

For each composition below, let
\(
(\OpPoint f,\OpPoint\theta,\OpPoint r)
\in
\operatorname{int}(\Func\times\ThetaSpace\times\Res)
\)
be an operating point of the composed \gls{dp}, and assume that the 
corresponding query maps are semidifferentiable at the
relevant graph points.
Then the following statements hold:

\begin{enumerate}
    \renewcommand{\labelenumi}{(\roman{enumi})}

\item \emph{Parallel:}
Let
\(
\DesignProblem_i
\subseteq
\Func_i\times\ThetaSpace_i\times\Res_i
\),
\(i=1,2\), be \glspl{dp}.
Then
\(
(\DesignProblem_1\otimes\DesignProblem_2)^T_
{(\OpPoint f,\OpPoint\theta,\OpPoint r)}
=
\left(
\DesignProblem_1^T\TParallel\DesignProblem_2^T
\right)_
{(\OpPoint f,\OpPoint\theta,\OpPoint r)}
\).

\item \emph{Union:}
Let
\(
\DesignProblem_i
\subseteq
\Func\times\ThetaSpace\times\Res
\),
\(i=1,2\), be \glspl{dp}.
Then
\(
(\DesignProblem_1\vee\DesignProblem_2)^T_
{(\OpPoint f,\OpPoint\theta,\OpPoint r)}
=
\left(
\DesignProblem_1^T\TUnion\DesignProblem_2^T
\right)_
{(\OpPoint f,\OpPoint\theta,\OpPoint r)}
\)
if
\(
(\OpPoint f,\OpPoint\theta,\OpPoint r)
\in
\DesignProblem_1\cap\DesignProblem_2
\),
and
\(
(\DesignProblem_1\vee\DesignProblem_2)^T_
{(\OpPoint f,\OpPoint\theta,\OpPoint r)}
=
(\DesignProblem_i^T)_
{(\OpPoint f,\OpPoint\theta,\OpPoint r)}
\)
if
\(
(\OpPoint f,\OpPoint\theta,\OpPoint r)
\in
\DesignProblem_i\setminus\DesignProblem_j
\),
\(i\neq j\).

\item \emph{Intersection:}
Let
\(
\DesignProblem_i
\subseteq
\Func\times\ThetaSpace\times\Res
\),
\(i=1,2\), be \glspl{dp}.
Then
\(
(\DesignProblem_1\wedge\DesignProblem_2)^T_
{(\OpPoint f,\OpPoint\theta,\OpPoint r)}
=
\left(
\DesignProblem_1^T\TIntersection\DesignProblem_2^T
\right)_
{(\OpPoint f,\OpPoint\theta,\OpPoint r)}
\).

\item \emph{Series:}
Let
\(
\DesignProblem_1
\subseteq
\Func\times\ThetaSpace_1\times\IntermediateSpace
\)
and
\(
\DesignProblem_2
\subseteq
\IntermediateSpace\times\ThetaSpace_2\times\Res
\)
be \glspl{dp}.
Then
\(
\left(
\DesignProblem_2^T\TSeries\DesignProblem_1^T
\right)_
{(\OpPoint f,\OpPoint\theta,\OpPoint r)}
\subseteq
(\DesignProblem_2\circ\DesignProblem_1)^T_
{(\OpPoint f,\OpPoint\theta,\OpPoint r)}
\).
If \(\DesignProblem_1\) and \(\DesignProblem_2\) are convex sets, then
\(
\left(
\DesignProblem_2^T\TSeries\DesignProblem_1^T
\right)_
{(\OpPoint f,\OpPoint\theta,\OpPoint r)}
=
(\DesignProblem_2\circ\DesignProblem_1)^T_
{(\OpPoint f,\OpPoint\theta,\OpPoint r)}
\).

\item \emph{Feedback:}
Let
\(
\DesignProblem
\subseteq
(\Func\times\IntermediateSpace)
\times\ThetaSpace
\times
(\Res\times\IntermediateSpace)
\)
be a \gls{dp} and a convex set.
If \cref{ass:feedback-interior} holds, then
\(
(\Trace(\DesignProblem))^T_
{(\OpPoint f,\OpPoint\theta,\OpPoint r)}
=
\left(
\TTrace(\DesignProblem^T)
\right)_
{(\OpPoint f,\OpPoint\theta,\OpPoint r)}
\).

\end{enumerate}
\end{theorem}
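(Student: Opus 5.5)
Since the operating point and all relevant internal realizations lie in the interiors of their boxes, the $+K$ completion in \cref{def:tangential-design-problem} is inactive. The plan is therefore to work directly with tangent cones, $\DesignProblem^T_{\cdot}=\Tan_{\DesignProblem}(\cdot)$, and to treat each composition separately. By \cref{lem:parameter-output-regrouping} we may take all parameter variations to be inputs.

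\emph{Parallel and union.} For parallel, the composed set is a Cartesian product $\DesignProblem_1\times\DesignProblem_2$, up to regrouping of coordinates. The inclusion $\Tan_{\DesignProblem_1\times\DesignProblem_2}\subseteq\Tan_{\DesignProblem_1}\times\Tan_{\DesignProblem_2}$ always holds. For the reverse inclusion we take a common sequence $h_k$. Given a pair of tangent vectors, semidifferentiability of each query lets us realize both along any prescribed $h_k\downarrow0$, with inputs $\delta x_k=\delta x$. This gives the product formula, as in Rockafellar--Wets Proposition~6.41 under derivability, which semidifferentiability supplies.

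For union, $\Tan_{\DesignProblem_1\cup\DesignProblem_2}=\Tan_{\DesignProblem_1}\cup\Tan_{\DesignProblem_2}$ at points in both sets: a sequence in the union has a subsequence in one of the sets. If the point lies only in $\DesignProblem_i$, closedness of $\DesignProblem_j$ gives a neighborhood disjoint from $\DesignProblem_j$, so the tangent cone is that of $\DesignProblem_i$. This matches the rule in \cref{def:composition-tangential-design-problems} that only branches containing the point are kept.

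\emph{Intersection.} The inclusion $\Tan_{\DesignProblem_1\cap\DesignProblem_2}\subseteq\Tan_{\DesignProblem_1}\cap\Tan_{\DesignProblem_2}$ is automatic. For the converse, take $(\delta x,\delta\theta,\delta y)$ in both cones. By \cref{prop:tangential-queries-graphical-derivatives}, $\delta y\in D\GenericQuery_i(\cdot)(\delta x,\delta\theta)$ for $i=1,2$. Semidifferentiability gives $\delta y^i_k\to\delta y$ with $\OpPoint y+h_k\delta y^i_k\in\GenericQuery_i(\OpPoint x+h_k\delta x,\OpPoint\theta+h_k\delta\theta)$ along a common sequence. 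Using upperness of the output fibers, the componentwise max $\delta y_k:=\max(\delta y^1_k,\delta y^2_k)$ in $\QueryOutputSpace$-order is feasible for both, provided the points stay in the box, which interiority guarantees. Since $\delta y_k\to\delta y$, the direction is tangent to the intersection. I expect this lattice-join argument to be the cleanest way to avoid a transversality condition.

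\emph{Series and feedback.} For series, the inclusion $\subseteq$ holds without convexity. Fix an internal realization $\rho$ with intermediate value $\OpPoint q$. If $(\delta f,\delta\theta_1,\delta q)\in\Tan_{\DesignProblem_1}$ and $(\delta q,\delta\theta_2,\delta r)\in\Tan_{\DesignProblem_2}$, semidifferentiability of the query of $\DesignProblem_2$, with $\delta q$ as input, lets us realize the second direction along the same $h_k$ and the same $\delta q_k$ produced by the first. Projecting out $q$ then gives a tangent to $\DesignProblem_2\circ\DesignProblem_1$. Since tangent cones are closed, taking the union over $\rho$ and the closure preserves the inclusion.

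For equality under convexity, we use that for convex sets the tangent cone is the closure of the cone generated by $\DesignProblem-\OpPoint z$. Any $(f,\theta,r)$ in the composition has some witness $q$, and $(f-\OpPoint f,\theta-\OpPoint\theta,r-\OpPoint r)$ together with $q-\OpPoint q$ are feasible directions of both local sets at $\rho$. Hence the cone generated by the composed set minus the operating point is contained in the union over $\rho$ of $\DesignProblem_{\TCDPGraph_\rho}$, and taking closures gives equality. The closure in \eqref{eq:tdp-series-composition} is exactly what makes this step work. Even a single $\rho$ suffices here, because convex feasible directions are valid at every realization.

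Feedback follows the same pattern. Write $\Trace(\DesignProblem)$ as the projection of $\DesignProblem\cap L$, with $L=\{q_1=q_2\}$ a linear subspace. The inclusion $\supseteq$ (the \gls{tdp} side) is handled as in the series case. The main obstacle is showing that $\Tan_{\DesignProblem\cap L}=\Tan_{\DesignProblem}\cap L$, which is needed to match the local \gls{tcdp} construction, in which the tangent cone is intersected with the linearized interconnection. This is where I would invoke \cref{ass:feedback-interior}: a Slater-type point in $\operatorname{int}(\DesignProblem)\cap L$ gives the convex intersection rule $\Tan_{\DesignProblem\cap L}=\Tan_{\DesignProblem}\cap L$ (Rockafellar--Wets Theorem~6.42). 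After that, the projection is handled via feasible-direction cones and the closure in \eqref{eq:tdp-feedback-composition}, exactly as in the series case.
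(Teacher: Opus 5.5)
Your proposal is correct and matches the paper's proof essentially step by step. It uses the same componentwise-max argument for intersection and the same lifting of the downstream tangent direction along $\delta q_k$ via semidifferentiability for the series forward inclusion. There, interiority of $\OpPoint{q}$, which you take as given, actually follows from the full-domain requirement in semidifferentiability of the downstream query. For feedback, it uses the same Slater-type intersection rule followed by the convex projection formula.

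The only difference is that you prove the parallel and union rules and the convex series reverse inclusion directly rather than citing Aubin--Frankowska's tangent-cone tables and Proposition~5.3.4. You use derivability from semidifferentiability, a subsequence argument, and $\Tan_C(\bar z)=\Closure{\mathbb{R}_+(C-\bar z)}$ applied to $\DesignProblem_2\circ\DesignProblem_1$; you should state explicitly that this set is convex as a linear image of a convex set. This gives a more self-contained version of the same idea.
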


We now prove these statements one composition operation at a time, recording
sharper assumptions where possible.
In the convex cases, the union over compatible internal realizations in
\eqref{eq:tdp-series-composition}
and~\eqref{eq:tdp-feedback-composition}
of \cref{def:composition-tangential-design-problems} is unnecessary.
The system-level tangent directions can be recovered from any compatible
internal realization.

The required regularity properties are not generally preserved by all 
composition operations. Therefore, the pointwise results above cannot 
yet be iterated to obtain a global commutation result for an arbitrary \gls{cdp}.

\noindent\emph{Parallel, item~\textnormal{(i)}:}
\label{subsec:parallel-composition}
By \cref{lem:ordering-cone-composition-commutation}, we may work directly with the
tangent cones \(\Tan_{\DesignProblem_1}\) and \(\Tan_{\DesignProblem_2}\).
Semidifferentiability implies protodifferentiability, so the tangent-cone
calculus summarized by Aubin and Frankowska~\cite{aubin2009setvalued} in
Table~4.2 applies.
In particular,
\(
\Tan_{\DesignProblem_1\otimes\DesignProblem_2}
=
\Tan_{\DesignProblem_1}\times\Tan_{\DesignProblem_2}
\).
Therefore, at the fixed operating point,
\(
(\DesignProblem_1\otimes\DesignProblem_2)^T_
{(\OpPoint f,\OpPoint\theta,\OpPoint r)}
=
\left(
\DesignProblem_1^T\TParallel\DesignProblem_2^T
\right)_
{(\OpPoint f,\OpPoint\theta,\OpPoint r)}
\).

\noindent\emph{Union, item~\textnormal{(ii)}:}
\label{subsec:union-composition}
No semidifferentiability is needed for the union composition.
By the tangent-cone calculus summarized by Aubin and
Frankowska~\cite{aubin2009setvalued} in Table~4.1,
\(
\Tan_{\DesignProblem_1\cup\DesignProblem_2}
=
\Tan_{\DesignProblem_1}\cup\Tan_{\DesignProblem_2}
\)
if
\(
(\OpPoint f,\OpPoint\theta,\OpPoint r)
\in\DesignProblem_1\cap\DesignProblem_2
\),
while
\(
\Tan_{\DesignProblem_1\cup\DesignProblem_2}
=
\Tan_{\DesignProblem_i}
\)
if
\(
(\OpPoint f,\OpPoint\theta,\OpPoint r)
\in\DesignProblem_i\setminus\DesignProblem_j
\),
\(i\neq j\).
Therefore, by \cref{lem:ordering-cone-composition-commutation}, at the fixed operating point,
\(
(\DesignProblem_1\vee\DesignProblem_2)^T_
{(\OpPoint f,\OpPoint\theta,\OpPoint r)}
=
\left(
\DesignProblem_1^T\TUnion\DesignProblem_2^T
\right)_
{(\OpPoint f,\OpPoint\theta,\OpPoint r)}
\)
if
\(
(\OpPoint f,\OpPoint\theta,\OpPoint r)
\in
\DesignProblem_1\cap\DesignProblem_2
\),
while
\(
(\DesignProblem_1\vee\DesignProblem_2)^T_
{(\OpPoint f,\OpPoint\theta,\OpPoint r)}
=
(\DesignProblem_i^T)_
{(\OpPoint f,\OpPoint\theta,\OpPoint r)}
\)
if
\(
(\OpPoint f,\OpPoint\theta,\OpPoint r)
\in
\DesignProblem_i\setminus\DesignProblem_j
\),
\(i\neq j\).

\noindent\emph{Intersection, item~\textnormal{(iii)}:}
\label{subsec:intersection-composition}
Let \(\GenericQuery_i\), \(i=1,2\), be the queries of
\(\DesignProblem_i\).
By the tangent-cone calculus summarized by Aubin and
Frankowska~\cite{aubin2009setvalued} in Table~4.1,
\(
\Tan_{\DesignProblem_1\cap\DesignProblem_2}
\subseteq
\Tan_{\DesignProblem_1}\cap\Tan_{\DesignProblem_2}
\).

For the reverse inclusion, fix
\(
(\delta x,\delta\theta)
\in
\QueryInputAmbientSpace\times\QueryParameterAmbientSpace
\)
and take
\(
\delta y\in
D\GenericQuery_1((\OpPoint x,\OpPoint\theta),\OpPoint y)
(\delta x,\delta\theta)
\cap
D\GenericQuery_2((\OpPoint x,\OpPoint\theta),\OpPoint y)
(\delta x,\delta\theta)
\).
For any \(\PerturbationScale_k\downarrow0\) and
\(
(\delta x_k,\delta\theta_k)\to(\delta x,\delta\theta)
\),
semidifferentiability gives
\(
\delta y_{i,k}\to\delta y
\),
\(i=1,2\), such that
\(
\OpPoint y+\PerturbationScale_k\delta y_{i,k}
\in
\GenericQuery_i(
\OpPoint x+\PerturbationScale_k\delta x_k,
\OpPoint\theta+\PerturbationScale_k\delta\theta_k)
\)
for all sufficiently large \(k\).
Set
\(
\delta y_k:=\max\{\delta y_{1,k},\delta y_{2,k}\}
\)
componentwise with respect to the order on \(\QueryOutputAmbientSpace\).
Then \(\delta y_k\to\delta y\), and upperness gives
\(
\OpPoint y+\PerturbationScale_k\delta y_k
\in
\GenericQuery_1(
\OpPoint x+\PerturbationScale_k\delta x_k,
\OpPoint\theta+\PerturbationScale_k\delta\theta_k)
\cap
\GenericQuery_2(
\OpPoint x+\PerturbationScale_k\delta x_k,
\OpPoint\theta+\PerturbationScale_k\delta\theta_k).
\)
Hence
\(
\Tan_{\DesignProblem_1\cap\DesignProblem_2}
=
\Tan_{\DesignProblem_1}\cap\Tan_{\DesignProblem_2}
\).

Therefore, by
\cref{lem:ordering-cone-composition-commutation}, at the fixed operating
point,
\(
(\DesignProblem_1\wedge\DesignProblem_2)^T_
{(\OpPoint f,\OpPoint\theta,\OpPoint r)}
=
\left(
\DesignProblem_1^T\TIntersection\DesignProblem_2^T
\right)_
{(\OpPoint f,\OpPoint\theta,\OpPoint r)}
\).
The argument above treats all parameter variations as inputs; by
\cref{lem:parameter-output-regrouping}, the result also holds when
parameter variations are included in the output.

\noindent\emph{Series composition, item~\textnormal{(iv)}:}
\label{subsec:series-composition}
Let
\(
\GenericQuery_1:
\QueryInputAmbientSpace\times\QueryParameterAmbientSpace_1
\rightrightarrows
\QueryIntermediateAmbientSpace
\)
and
\(
\GenericQuery_2:
\QueryIntermediateAmbientSpace\times\QueryParameterAmbientSpace_2
\rightrightarrows
\QueryOutputAmbientSpace
\)
be the queries of \(\DesignProblem_1\) and \(\DesignProblem_2\), respectively.
Neither inclusion holds in general without additional regularity assumptions;
see, e.g., the discussion by Aubin and
Frankowska~\cite{aubin2009setvalued} in Section~5.3.

We first prove the forward inclusion
\(
\left(
\DesignProblem_2^T\TSeries\DesignProblem_1^T
\right)_
{(\OpPoint f,\OpPoint\theta,\OpPoint r)}
\subseteq
(\DesignProblem_2\circ\DesignProblem_1)^T_
{(\OpPoint f,\OpPoint\theta,\OpPoint r)}
\).
\(
\left(
\DesignProblem_2^T\TSeries\DesignProblem_1^T
\right)_
{(\OpPoint f,\OpPoint\theta,\OpPoint r)}
\)
is defined through a closure; see
\cref{def:composition-tangential-design-problems},
equation~\eqref{eq:tdp-series-composition}.
By \cref{lem:tdp-tangent-ordering-cone-extension},
\((\DesignProblem_2\circ\DesignProblem_1)^T_
{\OpPoint f,\OpPoint\theta,\OpPoint r}\)
is closed, so it suffices to prove the inclusion before this closure.

Take
\(
(\delta x,(\delta\theta_1,\delta\theta_2),\delta y)
\)
in the set whose closure defines
\(
\left(
\DesignProblem_2^T\TSeries\DesignProblem_1^T
\right)_
{(\OpPoint f,\OpPoint\theta,\OpPoint r)}
\)
in equation~\eqref{eq:tdp-series-composition}.
Then there exist
\(
\OpPoint q,\delta q\in\QueryIntermediateAmbientSpace
\)
such that
\(
\OpPoint q\in\GenericQuery_1(\OpPoint x,\OpPoint\theta_1)
\),
\(
\OpPoint y\in\GenericQuery_2(\OpPoint q,\OpPoint\theta_2)
\),
and
\(
(\delta x,\delta\theta_1,\delta q)
\in
(\DesignProblem_1^T)_
{(\OpPoint x,\OpPoint\theta_1,\OpPoint q)}
\),
\(
(\delta q,\delta\theta_2,\delta y)
\in
(\DesignProblem_2^T)_
{(\OpPoint q,\OpPoint\theta_2,\OpPoint y)}
\).
 
Semidifferentiability of \(\GenericQuery_2\) implies, by
\cref{def:semidifferentiability},
that \(\OpPoint q\in\operatorname{int}(\IntermediateSpace)\).
Together with
\(
(\OpPoint f,\OpPoint\theta,\OpPoint r)
\in
\operatorname{int}(\Func\times\ThetaSpace\times\Res)
\),
\cref{def:tangential-design-problem} then gives
\(
(\delta x,\delta\theta_1,\delta q)
\in
\Tan_{\DesignProblem_1}
(\OpPoint x,\OpPoint\theta_1,\OpPoint q)
\)
and
\(
(\delta q,\delta\theta_2,\delta y)
\in
\Tan_{\DesignProblem_2}
(\OpPoint q,\OpPoint\theta_2,\OpPoint y)
\).
The first membership implies that there exist
\(\PerturbationScale_k\downarrow0\) and
\(
(\delta x_k,\delta\theta_{1,k},\delta q_k)
\to
(\delta x,\delta\theta_1,\delta q)
\)
such that
\(
(
\OpPoint x+\PerturbationScale_k\delta x_k,
\OpPoint\theta_1+\PerturbationScale_k\delta\theta_{1,k},
\OpPoint q+\PerturbationScale_k\delta q_k
)
\in
\DesignProblem_1
\), while the second gives
\(
\delta y\in
D\GenericQuery_2
((\OpPoint q,\OpPoint\theta_2),\OpPoint y)
(\delta q,\delta\theta_2)
\).
By semidifferentiability of \(\GenericQuery_2\), applied to
\(\PerturbationScale_k\downarrow0\) and
\(
(\delta q_k,\delta\theta_2)\to(\delta q,\delta\theta_2)
\),
there exist \(\delta y_k\to\delta y\) such that
\(
\OpPoint y+\PerturbationScale_k\delta y_k
\in
\GenericQuery_2(
\OpPoint q+\PerturbationScale_k\delta q_k,
\OpPoint\theta_2+\PerturbationScale_k\delta\theta_2)
\)
for all sufficiently large \(k\).
Hence
\(
(\delta x,(\delta\theta_1,\delta\theta_2),\delta y)
\in
\Tan_{\DesignProblem_2\circ\DesignProblem_1}
(\OpPoint x,\OpPoint\theta,\OpPoint y)
\subseteq
(\DesignProblem_2\circ\DesignProblem_1)^T_
{(\OpPoint f,\OpPoint\theta,\OpPoint r)}
\).
Since
\(
(\DesignProblem_2\circ\DesignProblem_1)^T_
{(\OpPoint f,\OpPoint\theta,\OpPoint r)}
\)
is closed, this proves \(
\left(
\DesignProblem_2^T\TSeries\DesignProblem_1^T
\right)_
{(\OpPoint f,\OpPoint\theta,\OpPoint r)}
\subseteq
(\DesignProblem_2\circ\DesignProblem_1)^T_
{(\OpPoint f,\OpPoint\theta,\OpPoint r)}
\).

Assume now that \(\DesignProblem_1\) and \(\DesignProblem_2\) are convex sets.
Then \(\GenericQuery_1\) and \(\GenericQuery_2\) are pseudo-convex at the
corresponding graph points, so Aubin and
Frankowska~\cite{aubin2009setvalued} show in Proposition~5.3.4 that, for every
compatible internal realization \(\OpPoint q\),
\(
D\!\left(
\GenericQuery_2
\circ
(\GenericQuery_1\times\operatorname{Id}_{\QueryParameterAmbientSpace_2})
\right)
\subseteq
\Closure{
D\GenericQuery_2
\circ
\left(
D\GenericQuery_1
\times
\operatorname{Id}_{\QueryParameterAmbientSpace_2}
\right)
}
\).
This implies, at the fixed operating point,
\(
(\DesignProblem_2\circ\DesignProblem_1)^T_
{(\OpPoint f,\OpPoint\theta,\OpPoint r)}
\subseteq
\left(
\DesignProblem_2^T\TSeries\DesignProblem_1^T
\right)_
{(\OpPoint f,\OpPoint\theta,\OpPoint r)}
\).
Together with the previous inclusion, this proves equality.

\noindent\emph{Feedback composition, item~\textnormal{(v)}:}
\label{subsec:feedback-composition}
By \cref{def:composition-operations},
\(\Trace(\DesignProblem)\) is obtained by an intersection and a projection.
Let
\(
M
:=
\{((f,q_1),\theta,(r,q_2))
\mid q_1=q_2\}
\)
and
\(
\pi:
(\FuncAmbientSpace\times\QueryIntermediateAmbientSpace)
\times\ThetaAmbientSpace
\times(\ResAmbientSpace\times\QueryIntermediateAmbientSpace)
\to
\FuncAmbientSpace\times\ThetaAmbientSpace\times\ResAmbientSpace
\)
be defined by
\(
\pi((f,q_1),\theta,(r,q_2))=(f,\theta,r)
\).
Then
\(
\Trace(\DesignProblem)=\pi(\DesignProblem\cap M)
\).
\cref{ass:feedback-interior} gives
\(
0\in\operatorname{int}(\DesignProblem-M)
\),
so, for every compatible \(\OpPoint q\),
\(
\Tan_{\DesignProblem\cap M}
((\OpPoint f,\OpPoint q),\OpPoint\theta,(\OpPoint r,\OpPoint q))
=
\Tan_{\DesignProblem}
((\OpPoint f,\OpPoint q),\OpPoint\theta,(\OpPoint r,\OpPoint q))
\cap
\Tan_M
((\OpPoint f,\OpPoint q),\OpPoint\theta,(\OpPoint r,\OpPoint q))
\),
as summarized by Aubin and Frankowska~\cite{aubin2009setvalued} in
Table~4.4, item~(5b).
Since \(\DesignProblem\cap M\) is convex and \(\pi\) is linear,
\(
\Tan_{\pi(\DesignProblem\cap M)}
(\OpPoint f,\OpPoint\theta,\OpPoint r)
=
\Closure{
\pi\!\left(
\Tan_{\DesignProblem\cap M}
((\OpPoint f,\OpPoint q),\OpPoint\theta,(\OpPoint r,\OpPoint q))
\right)
}
\).
Rockafellar and Wets~\cite{rockafellar1998variational} establish this equality
in Theorem~6.43.
By \cref{lem:ordering-cone-composition-commutation}, this tangent-cone
equality is preserved by the ordering-cone completion defining the corresponding
\glspl{tdp}.
The closure in \cref{def:composition-tangential-design-problems},
equation~\eqref{eq:tdp-feedback-composition}, therefore gives
\(
(\Trace(\DesignProblem))^T_
{(\OpPoint f,\OpPoint\theta,\OpPoint r)}
=
\left(
\TTrace(\DesignProblem^T)
\right)_
{(\OpPoint f,\OpPoint\theta,\OpPoint r)}
\).

%% file: sections/finite_variations.tex
\section{Finite variations}
\label{sec:finite-variations}

The tangential model describes perturbations
\(x+\PerturbationScale\delta x\) for small \(\PerturbationScale>0\).
We instead consider finite variations \(x+\Delta x\) that remain within the
original functionality, parameter, and resource boxes.
For convex \glspl{dp}, we show that this restriction yields a tighter
outer approximation of the exact finite variations than that provided by the original \gls{tdp}.

Fix an operating point
\(
(\OpPoint f,\OpPoint\theta,\OpPoint r)\in\DesignProblem
\)
and define
\(
V(\OpPoint f,\OpPoint\theta,\OpPoint r)
:=
\FiniteVariationFuncSpace{\OpPoint f}
\times
\FiniteVariationThetaSpace{\OpPoint\theta}
\times
\FiniteVariationResSpace{\OpPoint r}.
\)
The exact finite variations and the bounded tangential model are, respectively,
\(
\DesignProblem^\Delta_{\OpPoint f,\OpPoint\theta,\OpPoint r}
:=
(\DesignProblem-(\OpPoint f,\OpPoint\theta,\OpPoint r))
\cap
V(\OpPoint f,\OpPoint\theta,\OpPoint r),
\
{\tdp}^{\Delta}_{\OpPoint f,\OpPoint\theta,\OpPoint r}
:=
\tdp_{\OpPoint f,\OpPoint\theta,\OpPoint r}
\cap
V(\OpPoint f,\OpPoint\theta,\OpPoint r).
\)
Translation preserves upperness, so the first set is a \gls{dp} on the
translated boxes. The second one is also a \gls{dp}, including when the
operating point lies on the boundary. Indeed, every element of a translated box is
tangent to the original box at the operating point; hence
\cref{lem:relative-upperness-tangent} gives
\begin{equation}
\label{eq:bounded-tdp-raw-tangent}
    {\tdp}^{\Delta}_{\OpPoint f,\OpPoint\theta,\OpPoint r}
    =
    \Tan_{\DesignProblem}(\OpPoint f,\OpPoint\theta,\OpPoint r)\cap V(\OpPoint f,\OpPoint\theta,\OpPoint r).
\end{equation}
Thus, the ordering cone in
\cref{def:tangential-design-problem} adds nothing after restriction
to \(V\).

\begin{theorem}[Finite-change outer approximation]
\label{thm:finite-variation-outer-approximation}
Let \(\DesignProblem\) be a \gls{dp} and fix
\((\OpPoint f,\OpPoint\theta,\OpPoint r)\in\DesignProblem\).
If \(\DesignProblem\) is convex, then
\(
    \DesignProblem^\Delta_{\OpPoint f,\OpPoint\theta,\OpPoint r}
    \subseteq
    {\tdp}^{\Delta}_{\OpPoint f,\OpPoint\theta,\OpPoint r}
    \subseteq
    \tdp_{\OpPoint f,\OpPoint\theta,\OpPoint r}.
\)
\end{theorem}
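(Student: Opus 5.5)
The plan is to prove the two inclusions separately; the second is immediate. By definition, \({\tdp}^{\Delta}_{\OpPoint f,\OpPoint\theta,\OpPoint r}=\tdp_{\OpPoint f,\OpPoint\theta,\OpPoint r}\cap V(\OpPoint f,\OpPoint\theta,\OpPoint r)\), and an intersection is contained in each of its factors. So \({\tdp}^{\Delta}_{\OpPoint f,\OpPoint\theta,\OpPoint r}\subseteq\tdp_{\OpPoint f,\OpPoint\theta,\OpPoint r}\) holds with no convexity assumption.

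For the first inclusion, I would use the representation~\eqref{eq:bounded-tdp-raw-tangent}, which gives
\({\tdp}^{\Delta}_{\OpPoint f,\OpPoint\theta,\OpPoint r}=\Tan_{\DesignProblem}(\OpPoint f,\OpPoint\theta,\OpPoint r)\cap V(\OpPoint f,\OpPoint\theta,\OpPoint r)\).
Since \(\DesignProblem^\Delta_{\OpPoint f,\OpPoint\theta,\OpPoint r}\subseteq V(\OpPoint f,\OpPoint\theta,\OpPoint r)\) by construction, it is enough to show
\(\DesignProblem-(\OpPoint f,\OpPoint\theta,\OpPoint r)\subseteq\Tan_{\DesignProblem}(\OpPoint f,\OpPoint\theta,\OpPoint r)\).
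This is where convexity enters. Take any \(z\in\DesignProblem\) and set \(\Delta:=z-(\OpPoint f,\OpPoint\theta,\OpPoint r)\). For every \(\PerturbationScale\in(0,1]\), convexity gives
\((\OpPoint f,\OpPoint\theta,\OpPoint r)+\PerturbationScale\Delta=(1-\PerturbationScale)(\OpPoint f,\OpPoint\theta,\OpPoint r)+\PerturbationScale z\in\DesignProblem\).
Choosing \(\PerturbationScale_k=1/k\) and the constant sequence \(\delta z_k=\Delta\) in \cref{def:tangent-cone} then shows \(\Delta\in\Tan_{\DesignProblem}(\OpPoint f,\OpPoint\theta,\OpPoint r)\). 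Equivalently, one can quote the standard fact \cite[Theorem~6.9]{rockafellar1998variational} that, for a convex set, the tangent cone is the closure of the cone generated by \(\DesignProblem-(\OpPoint f,\OpPoint\theta,\OpPoint r)\).

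Intersecting both sides with \(V(\OpPoint f,\OpPoint\theta,\OpPoint r)\) yields \(\DesignProblem^\Delta_{\OpPoint f,\OpPoint\theta,\OpPoint r}\subseteq{\tdp}^{\Delta}_{\OpPoint f,\OpPoint\theta,\OpPoint r}\).

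The only step that is not purely mechanical is the identity~\eqref{eq:bounded-tdp-raw-tangent}, i.e.\ that the ordering cone \(K\) adds nothing once we restrict to \(V\); the paper has already stated it via \cref{lem:relative-upperness-tangent}, so I would simply invoke it. The argument can also avoid that identity altogether, because \(\Tan_{\DesignProblem}\subseteq\Tan_{\DesignProblem}+K=\tdp_{\OpPoint f,\OpPoint\theta,\OpPoint r}\) (since \(0\in K\)). The convexity step directly gives \(\DesignProblem^\Delta_{\OpPoint f,\OpPoint\theta,\OpPoint r}\subseteq\tdp_{\OpPoint f,\OpPoint\theta,\OpPoint r}\cap V(\OpPoint f,\OpPoint\theta,\OpPoint r)\), which is the first inclusion by the definition of \({\tdp}^{\Delta}_{\OpPoint f,\OpPoint\theta,\OpPoint r}\).
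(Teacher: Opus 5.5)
Your proposal is correct and follows the paper's proof. Convexity places every displacement in $\DesignProblem-(\OpPoint f,\OpPoint\theta,\OpPoint r)$ inside $\Tan_{\DesignProblem}(\OpPoint f,\OpPoint\theta,\OpPoint r)$; the paper cites \cite[Theorem~6.9]{rockafellar1998variational} for this, and your segment argument proves it directly. Intersecting with $V(\OpPoint f,\OpPoint\theta,\OpPoint r)$ then gives the first inclusion, and the second is immediate from the definition. Your remark that the equality \eqref{eq:bounded-tdp-raw-tangent} is not needed, only the trivial inclusion $\Tan_{\DesignProblem}\subseteq\Tan_{\DesignProblem}+K$ from $0\in K$, is a valid small simplification of the paper's argument.
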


\begin{proof}
Rockafellar and Wets~\cite{rockafellar1998variational} show in
Theorem~6.9 that, for a convex set, every displacement from one of its points
belongs to its tangent cone, so
\(\DesignProblem-(\OpPoint f,\OpPoint\theta,\OpPoint r)
\subseteq
\Tan_{\DesignProblem}(\OpPoint f,\OpPoint\theta,\OpPoint r)\).
Equation~\eqref{eq:bounded-tdp-raw-tangent} gives the first inclusion, and
the definition of the restriction gives the second.
\end{proof}

As for the family \(\tdp\) in
\cref{def:tangential-design-problem}, we denote by
\({\tdp}^{\Delta}\) the family of bounded \glspl{tdp}
\({\tdp}^{\Delta}_{\OpPoint f,\OpPoint\theta,\OpPoint r}\).
We compose them according to
\cref{def:composition-tangential-design-problems}, requiring every
variation to remain in the corresponding translated box.

\begin{corollary}[Finite variations from one internal realization]
\label{cor:finite-variations-composition}

Let \(\CDPGraph\) be a \gls{cdp} containing no union compositions, fix
a system-level operating point
\(
(\OpPoint f,\OpPoint\theta,\OpPoint r)\in\DesignProblem_{\CDPGraph},
\)
and fix any single internal realization
\(
\InternalRealization
\in
\Real_{\CDPGraph}
\left(
\OpPoint f,\OpPoint r;
(\OpPoint\theta_v)_{v\in\Nodes}
\right).
\)
Let \(\CDPGraph^{T\Delta}_{\InternalRealization}\) denote the \gls{cdp}
\(\CDPGraph^T_{\InternalRealization}\) with each local \gls{tdp} restricted to its
translated box. If every \(\DesignProblem_v\) is convex, then
\(
(\DesignProblem_{\CDPGraph})^\Delta
\subseteq
\DesignProblem_{\CDPGraph^{T\Delta}_{\InternalRealization}}
\subseteq
\DesignProblem_{\CDPGraph^T_{\InternalRealization}}.
\)
\end{corollary}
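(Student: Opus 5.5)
The plan is to argue pointwise and directly, without invoking the commutation results of \cref{thm:composition-rules-system-sensitivity}. For a fixed internal realization \(\InternalRealization\) we lift any system-level finite variation to local finite variations through a compatible internal witness, and then apply \cref{thm:finite-variation-outer-approximation} blockwise. The second inclusion is immediate: each local bounded \gls{tdp} \(\tdp^{\Delta}_{v}\) is contained in \(\tdp_{v}\). The \gls{cdp}-induced \gls{dp}~\eqref{eq:codesign-induced-design-problem} is monotone with respect to enlarging the local blocks, since a witness for the smaller blocks is also a witness for the larger ones. Removing the translated-box restriction on the system-level variables only enlarges the set further. Hence \(\DesignProblem_{\CDPGraph^{T\Delta}_{\InternalRealization}}\subseteq\DesignProblem_{\CDPGraph^T_{\InternalRealization}}\).

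For the first inclusion, take \((\Delta f,\Delta\theta,\Delta r)\in(\DesignProblem_{\CDPGraph})^\Delta\). By definition, \((\OpPoint f+\Delta f,\OpPoint\theta+\Delta\theta,\OpPoint r+\Delta r)\in\DesignProblem_{\CDPGraph}\), so there is an internal realization \(\InternalRealization'=((f'_v,r'_v))_{v\in\Nodes}\) of this perturbed point. Under \(\InternalRealization'\), each \(\DesignProblem_v\) is feasible at \((f'_v,\theta'_v,r'_v)\), every edge satisfies \(r'_e=f'_e\), and the external ports agree with \((\OpPoint f+\Delta f,\OpPoint r+\Delta r)\). We then define the local variations \(\Delta_v:=(f'_v-\OpPoint f_v,\theta'_v-\OpPoint\theta_v,r'_v-\OpPoint r_v)\), where \((\OpPoint f_v,\OpPoint\theta_v,\OpPoint r_v)\) is given by the fixed \(\InternalRealization\). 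Since both \((f'_v,\theta'_v,r'_v)\) and the base point lie in \(\DesignProblem_v\subseteq\Func_v\times\ThetaSpace_v\times\Res_v\), we get \(\Delta_v\in(\DesignProblem_v)^\Delta_{\OpPoint f_v,\OpPoint\theta_v,\OpPoint r_v}\). Convexity of \(\DesignProblem_v\) and \cref{thm:finite-variation-outer-approximation} then give \(\Delta_v\in\tdp^{\Delta}_{v}\).

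The interconnection constraints are linear, so they transfer to the differences. Edges give \(r'_e-\OpPoint r_e=f'_e-\OpPoint f_e\), because \(\OpPoint r_e=\OpPoint f_e\) holds under \(\InternalRealization\). External ports give exactly \((\Delta f,\Delta r)\), and the parameter coordinates give \(\Delta\theta\). Thus \((\Delta_v)_v\) is an internal realization witnessing \((\Delta f,\Delta\theta,\Delta r)\in\DesignProblem_{\CDPGraph^{T\Delta}_{\InternalRealization}}\). The system-level translated-box membership holds because it is part of the hypothesis.

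The main obstacle is to justify that the composition of \cref{def:composition-tangential-design-problems}, with its closures and its union over realizations, reduces here to the single-realization \gls{cdp} \(\DesignProblem_{\CDPGraph^{T\Delta}_{\InternalRealization}}\). The point I expect to need care is that the base point \(\InternalRealization\) and the perturbed witness \(\InternalRealization'\) may be entirely different realizations. This is harmless only because we measure differences against \(\InternalRealization\) and use that, for convex sets, every displacement is tangent. This is exactly where convexity replaces the union over realizations. Union compositions must be excluded because the perturbed point may lie in a branch that does not contain the operating point, and such branches are discarded by \(\TUnion\). I would also verify that the translated-box restriction for internal edge variables is consistent on both sides of each edge; this holds because \(\IntermediateSpace\) is shared, so \(f'_e,r'_e\in\IntermediateSpace\) and the differences lie in \(\FiniteVariationQueryOutputSpace{\OpPoint q}\)-type boxes for both incident blocks.
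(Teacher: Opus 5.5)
Your proof is correct and follows essentially the same route as the paper. You internally realize the perturbed point, take differences against the fixed \(\InternalRealization\), apply \cref{thm:finite-variation-outer-approximation} blockwise, and use linearity of the interconnections together with monotonicity under inclusion, which also gives the second inclusion. The closures and unions of \cref{def:composition-tangential-design-problems} that you flag as the main obstacle never actually enter. The statement concerns only the single-realization \gls{cdp} \(\CDPGraph^{T\Delta}_{\InternalRealization}\), whose induced \gls{dp} your witness argument already checks directly against~\eqref{eq:codesign-induced-design-problem}.
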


\begin{proof}
Take
\(
(\Delta f,\Delta\theta,\Delta r)
\in
(\DesignProblem_{\CDPGraph})^\Delta
\).
Then
\(
(\OpPoint f+\Delta f,\OpPoint\theta+\Delta\theta,
 \OpPoint r+\Delta r)
\in\DesignProblem_{\CDPGraph}
\),
so choose an internal realization of this perturbed point. Together with the
fixed \(\InternalRealization\), this defines, at every \(v\in\Nodes\), a finite
displacement from
\(
(\OpPoint f_v,\OpPoint\theta_v,\OpPoint r_v)
\)
to another point of \(\DesignProblem_v\). By 
\cref{thm:finite-variation-outer-approximation},
\(
\DesignProblem_v^\Delta
\subseteq
\DesignProblem_v^{T\Delta}
\subseteq
\DesignProblem_v^T
\)
for every \(v\in\Nodes\). Since every co-design composition in
\cref{def:composition-operations} is monotone under set inclusion,
both inclusions are preserved at the system level.
\end{proof}

The exclusion of union compositions in
\cref{cor:finite-variations-composition} is essential: a finite
change may move from one branch of a union to another.

\begin{example}
\label{ex:finite-variation-union}
Let
\(
\Func=\ThetaSpace=\mathbb R^0
\), whose unique element we denote by \(0\),
and
\(
\Res=[0,3]^2
\).
Consider the convex \glspl{dp}
\(
\DesignProblem_1
=
\{(0,0,r)\mid r_2\geq 2\},
\qquad
\DesignProblem_2
=
\{(0,0,r)\mid r_1\geq 2\}.
\)
At
\(
(\OpPoint f,\OpPoint\theta,\OpPoint r)
=
(0,0,(1,2))
\),
only \(\DesignProblem_1\) is feasible. Hence
\(
(\DesignProblem_1^{T\Delta}\TUnion\DesignProblem_2^{T\Delta})
_{\OpPoint f,\OpPoint\theta,\OpPoint r}
=
\{0\}\times\{0\}\times([-1,2]\times[0,1]).
\)
However,
\(
(0,0,(1,-1))
\)
is a feasible finite variation of
\(\DesignProblem_1\vee\DesignProblem_2\), since
\(
(0,0,(1,2))+(0,0,(1,-1))=(0,0,(2,1))\in\DesignProblem_2
\).
Therefore
\(
(0,0,(1,-1))
\in
(\DesignProblem_1\vee\DesignProblem_2)^\Delta_
{\OpPoint f,\OpPoint\theta,\OpPoint r}
\quad\text{but}\quad
(0,0,(1,-1))
\notin
(\DesignProblem_1^{T\Delta}\TUnion\DesignProblem_2^{T\Delta})_
{\OpPoint f,\OpPoint\theta,\OpPoint r},
\)
so the first inclusion in
\cref{cor:finite-variations-composition} fails.
\end{example}

\begin{remark} 
\label{rem:finite-variation-query-intersection} 
Let \(\CDPGraph\) be a \gls{cdp} composed of convex \glspl{dp} and containing 
no union compositions. Suppose we have already computed 
\(
\GenericQuery_{\CDPGraph}(\OpPoint x,\OpPoint\theta)
\)
for some
\(
(\OpPoint x,\OpPoint\theta)
\),
and now want to solve the query at
\(
(\OpPoint x+\Delta x,\OpPoint\theta+\Delta\theta)
\).
By \cref{cor:finite-variations-composition}, the finite-variation
approximations associated with all solutions of the previous query and all 
their internal realizations can be intersected to obtain 
\[
\GenericQuery_{\CDPGraph}
\left(\OpPoint x+\Delta x,\OpPoint\theta+\Delta\theta\right)
\subseteq 
\bigcap_{\mathmakebox[4em][c]{\substack{ 
    \OpPoint y\in\GenericQuery_{\CDPGraph}(\OpPoint x,\OpPoint\theta)\\ 
    \InternalRealization\in 
    \Real_{\CDPGraph}\left( 
        \OpPoint x,\OpPoint y; 
        (\OpPoint\theta_v)_{v\in\Nodes} 
    \right) 
}}} 
\left[ 
    \OpPoint y+
    \GenericQuery_{\CDPGraph^{T\Delta}_{\InternalRealization}} 
    (\Delta x,\Delta\theta) 
\right]. 
\]
Thus, their intersection yields a tighter outer approximation.
\end{remark}

%% file: sections/numerical_example.tex
\section{Numerical examples}
\label{sec:numerical-examples}

We use three representative \glspl{cdp} to illustrate the main theoretical results
and assess the computational value of the proposed sensitivity construction.
All \glspl{dp} considered in these examples are represented by finitely many
constraints; see \cref{app:finite-constraint-design-problems} for a
detailed discussion of this setting.

\subsection{Rover: commutation beyond convexity}
\label{subsec:rover-example}

\begin{figure}[t]
    \centering
    \includegraphics[
        width=\linewidth
    ]{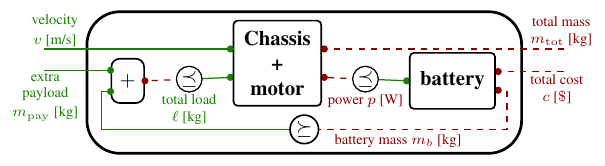}
    \caption{Rover co-design architecture.}
    \label{fig:rover-codesign-architecture}
\end{figure}

\begin{figure}[b]
    \centering
    \includegraphics[
        width=\linewidth
    ]{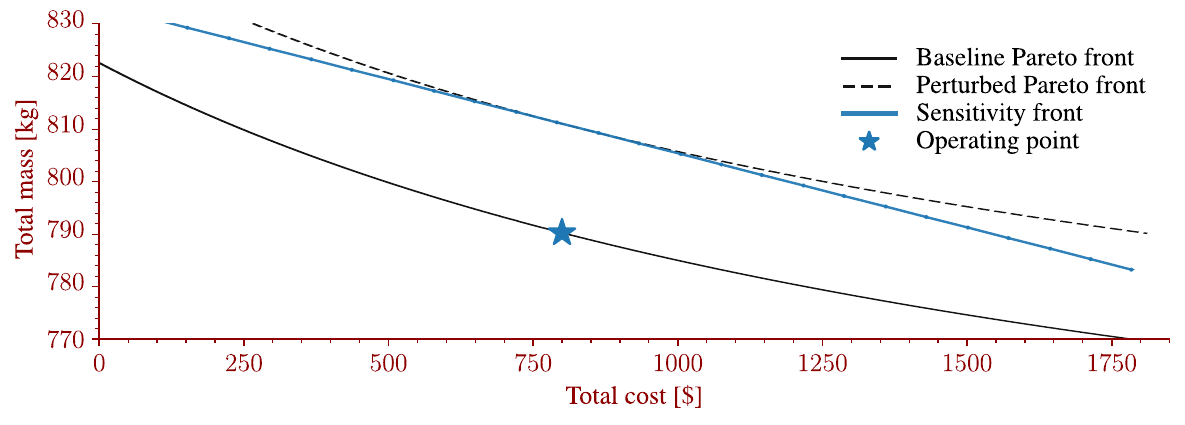}
    \caption{
        Rover sensitivity analysis. The baseline Pareto front is compared with
        the perturbed Pareto front and with the sensitivity front
        computed at the selected operating point.
    }
    \label{fig:rover-sensitivity-frontiers}
\end{figure}

We first consider the rover \gls{cdp} introduced by Cai et
al.~\cite{cai2026scalable}, shown in
\cref{fig:rover-codesign-architecture}. It consists of a chassis--motor
block and a battery block coupled through power and mass.
Since the conditions of
\cref{thm:composition-rules-system-sensitivity} are only sufficient,
we use this example to illustrate the sharper commutation conditions of
\cref{app:beyond-convexity} for a payload variation and the resulting
cost--mass trade-off.
As in \cref{sec:problem-setup}, all \glspl{dp} are defined over bounded
boxes, left implicit here and taken sufficiently large that the operating
points considered below lie in their interiors.

The chassis--motor block relates total load \(\ell\) and velocity \(v\) to
power \(p_{\mathrm{ch}}\) and total mass \(m_{\mathrm{tot}}\) through
\(p_{\mathrm{ch}}\geq p_0+\kappa(\ell+m_s)v\) and
\(m_{\mathrm{tot}}\geq\ell+m_s\), where \(m_s\) is the structural mass,
\(p_0\) the constant power required, and \(\kappa\) a coefficient.
The battery block is defined by
\(p_{\mathrm{bat}}\leq\alpha m_b+\beta c+\gamma m_b c\),
where \(p_{\mathrm{bat}}\) is the provided power, \(m_b\) and \(c\) are the
battery mass and cost, and \(\alpha\), \(\beta\), and \(\gamma\) are coefficients.

We first derive the system-level \gls{dp}. Series composition through power gives
\(p_0+\kappa(\ell+m_s)v
\leq \alpha m_b+\beta c+\gamma m_b c\)
and \(m_{\mathrm{tot}}\geq\ell+m_s\).
Closing the feedback loop with \(\ell=m_{\mathrm{pay}}+m_b\) gives
\(p_0+\kappa(m_{\mathrm{pay}}+m_s)v-\beta c
\leq(\alpha+\gamma c-\kappa v)m_b\)
and \(m_b\leq m_{\mathrm{tot}}-m_s-m_{\mathrm{pay}}\).
Assuming that
\(\alpha+\gamma c-\kappa v>0\), we may take
\(m_b=m_{\mathrm{tot}}-m_s-m_{\mathrm{pay}}\), yielding the system-level
\gls{dp}
\begin{equation}
    \label{eq:rover-system}
    \kappa m_{\mathrm{tot}}v
    +(\alpha+\gamma c)m_{\mathrm{pay}}
    \leq
    (\alpha+\gamma c)(m_{\mathrm{tot}}-m_s)
    +\beta c-p_0.
\end{equation}

At an operating point
\((\OpPoint{v},\OpPoint{m}_{\mathrm{pay}},
\OpPoint{c},\OpPoint{m}_{\mathrm{tot}})\) on the Pareto front,
\eqref{eq:rover-system} is active.
Since
\(\alpha+\gamma\OpPoint c-\kappa\OpPoint v>0\),
the \gls{mfcq} can be verified by increasing \(m_{\mathrm{tot}}\); see
\cref{app:finite-constraint-design-problems}.
Hence its \gls{tdp} is
\begin{equation}
    \label{eq:rover-system-sensitivity}
    \medmuskip=2mu
    \begin{aligned}
        &\kappa\OpPoint{m}_{\mathrm{tot}}\,\delta v
        +(\alpha+\gamma\OpPoint{c})\,\delta m_{\mathrm{pay}}
        \\
        &\leq
        \bigl(
            \beta+\gamma(
            \OpPoint{m}_{\mathrm{tot}}
            -m_s-\OpPoint{m}_{\mathrm{pay}})
        \bigr)\delta c
        +
        (\alpha+\gamma\OpPoint{c}-\kappa\OpPoint{v})
        \delta m_{\mathrm{tot}} .
    \end{aligned}
\end{equation}

For fixed \(\delta v\) and \(\delta m_{\mathrm{pay}}\),
\eqref{eq:rover-system-sensitivity} defines a linear Pareto front in
\((\delta c,\delta m_{\mathrm{tot}})\), providing a tangent approximation to
the perturbed nonlinear front.
\cref{fig:rover-sensitivity-frontiers} uses
\(m_s=770\), \(p_0=10\), \(\kappa=1\),
\(\alpha=0.975\), \(\beta=0.02733\), \(\gamma=0.0005\),
\((\OpPoint v,\OpPoint m_{\mathrm{pay}})=(0.05,0.1)\),
\((\OpPoint c,\OpPoint m_{\mathrm{tot}})\approx(799.86,790.21)\),
and \((\delta v,\delta m_{\mathrm{pay}})=(0,20)\).

This tangent half-space coincides with a tangent-plane cut used in the
outer-approximation construction of Cai et al.~\cite{cai2026scalable}.

We next recover the same system-level sensitivity compositionally from the
sensitivities of the two blocks. At the operating point above, let
\(\OpPoint m_b=\OpPoint m_{\mathrm{tot}}-m_s-\OpPoint m_{\mathrm{pay}}\),
\(\OpPoint\ell=\OpPoint m_{\mathrm{pay}}+\OpPoint m_b\), and
\(\OpPoint p=p_0+\kappa(\OpPoint\ell+m_s)\OpPoint v\).
Since \eqref{eq:rover-system} is active,
\(\OpPoint p
=
\alpha\OpPoint m_b+\beta\OpPoint c+\gamma\OpPoint m_b\OpPoint c\).

For the chassis--motor block, \gls{mfcq} can be verified by increasing
\(p_{\mathrm{ch}}\) and \(m_{\mathrm{tot}}\).
For the battery block, it can be verified by increasing \(m_b\) and \(c\).
Hence the queries of both blocks are semidifferentiable by
\cref{prop:query-regularity-mfcq}. The linearized chassis--motor constraints are
\(\kappa\OpPoint{v}\,\delta\ell
+\kappa(\OpPoint{\ell}+m_s)\,\delta v-\delta p\leq0\) and
\(\delta\ell-\delta m_{\mathrm{tot}}\leq0\), while the linearized battery
constraint is
\(\delta p-(\alpha+\gamma\OpPoint{c})\,\delta m_b
-(\beta+\gamma\OpPoint{m}_b)\,\delta c\leq0\).

For series, \cref{thm:composition-rules-system-sensitivity},
item~\textnormal{(iv)}, gives the forward inclusion, while
\cref{prop:series-composition-reverse-inclusion-nonconvex} gives the
reverse because
\(\delta\ell=\delta v=\delta m_{\mathrm{tot}}
=\delta m_b=\delta c=0\)
leaves \(-\delta p\leq0\) and \(\delta p\leq0\), forcing \(\delta p=0\).

For feedback, we rewrite the series-composed \gls{dp} \(\DesignProblem\) using
the internal functionality \(q\) and \(\ell=m_{\mathrm{pay}}+q\).
It is defined by
\(p_0+\kappa(m_{\mathrm{pay}}+q+m_s)v
\leq\alpha m_b+\beta c+\gamma m_b c\)
and
\(m_{\mathrm{tot}}\geq m_{\mathrm{pay}}+q+m_s\).
The feedback interconnection then identifies \(q=m_b\).
The compatible internal realization is unique because
\eqref{eq:rover-system} is active, with
\(\OpPoint q=\OpPoint m_b
=\OpPoint m_{\mathrm{tot}}-m_s-\OpPoint m_{\mathrm{pay}}\).
The equality \(q=m_b\) has a Jacobian of full row rank, and increasing \(c\) and
\(m_{\mathrm{tot}}\) strictly decreases the two active constraints, so the
intersection condition of
\cref{prop:feedback-composition-nonconvex} is satisfied. After imposing
\(q=m_b\), feasibility requires
\(q\geq
[p_0+\kappa(m_{\mathrm{pay}}+m_s)v-\beta c]/
(\alpha+\gamma c-\kappa v)\)
and \(q\leq m_{\mathrm{tot}}-m_s-m_{\mathrm{pay}}\).
Thus, \(q=m_{\mathrm{tot}}-m_s-m_{\mathrm{pay}}\) provides 
a feasible Lipschitz selection; hence, 
\cref{prop:feedback-composition-nonconvex} applies.

We have already obtained the direct system-level sensitivity in
\eqref{eq:rover-system-sensitivity}. 
Composing the linearized constraints eliminates \(\delta p,\delta\ell\), and
\(\delta m_b\); using \(\OpPoint\ell+m_s=\OpPoint m_{\mathrm{tot}}\) and
\(\OpPoint{m}_b
=\OpPoint{m}_{\mathrm{tot}}-m_s-\OpPoint{m}_{\mathrm{pay}}\)
gives exactly \eqref{eq:rover-system-sensitivity}.

\subsection{Machine: finite variations in the convex setting}
\label{subsec:machine-sensitivity-example}

\begin{figure}[t]
    \centering
    \includegraphics[width=0.9\columnwidth]
    {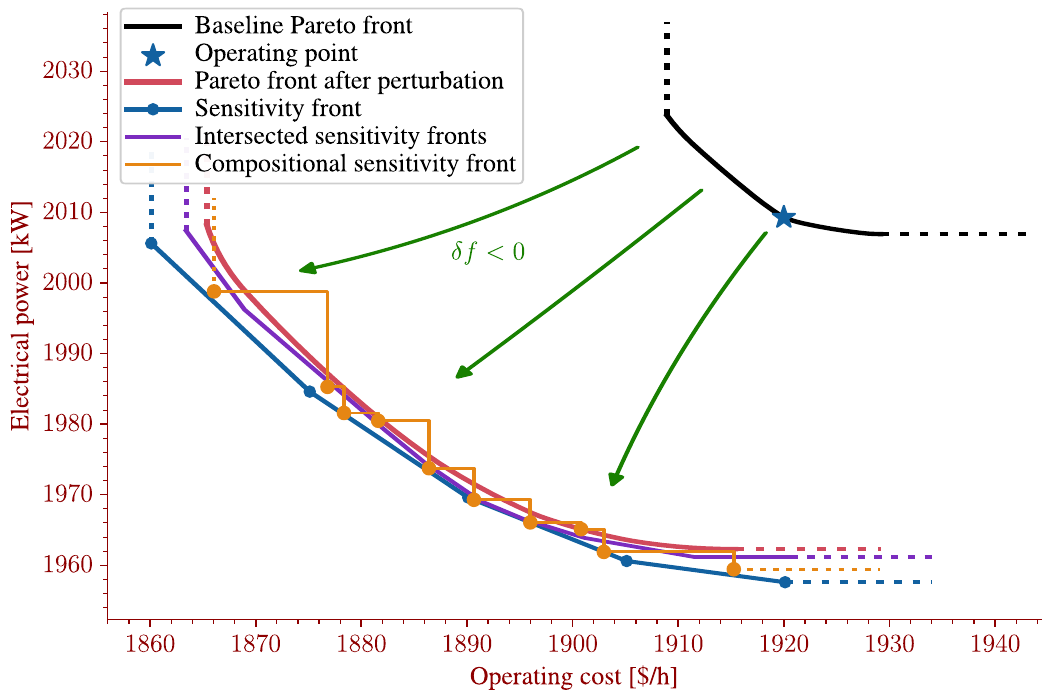}
    \caption{Sensitivity to a change in the production requirement.}
    \label{fig:machine-production-requirement-sensitivity}
\end{figure}

We now revisit the \gls{cdp} introduced in
\cref{sec:introduction} to illustrate the application of
\cref{sec:finite-variations}.

It consists of a production-allocation block and a machine-consumption block
connected in series through the normalized machine utilizations
\(x=(x_1,\ldots,x_6)\in[0,1]^6\).
We denote the resulting \gls{cdp} by \(\CDPGraph\).
The external functionality is the required production rate \(f\), the parameter
\(\eta\) describes the efficiency of the proprietary machines, and the external
resources are operating cost \(c\) and power consumption \(p\).

Machine \(i\) produces
\(q_i(x_i;\eta)=Q_i(\eta)(1-e^{-k_i x_i})\), where
\(Q_i(\eta)=Q_i^0(1+\gamma_i\eta)\), with \(\gamma_i=0\) for conventional
machines and \(\gamma_i>0\) for proprietary machines.
The resulting system-level feasible resource set is
\(\GenericQuery_{\CDPGraph}(f,\eta)
=
\{(c,p)\mid
\exists x\in[0,1]^6:
f\le\sum_{i=1}^{6}q_i(x_i;\eta),\
c\ge\sum_{i=1}^{6}a_i x_i,\
p\ge\sum_{i=1}^{6}(b_i x_i+d_i x_i^2)\}\).
Here, \(x\) is the resource of the production-allocation block and the
functionality of the machine-consumption block.
The parameter values are summarized in \cref{tab:machine-parameters},
rounded for readability.

\begin{table}[t]
    \centering
    \begin{tabular}{c|rrrrrr}
        \hline
        \(i\) & \(Q_i^0\) & \(k_i\) & \(\gamma_i\) & \(a_i\) & \(b_i\) & \(d_i\) \\
        \hline
        1 & 22.020 & 0.85 & 0    & 96.00  & 403.200 & 38.400 \\
        2 & 24.465 & 1.00 & 0    & 216.00 & 435.456 & 41.472 \\
        3 & 27.463 & 1.15 & 0    & 360.00 & 423.360 & 40.320 \\
        4 & 28.098 & 1.30 & 0.35 & 528.00 & 354.816 & 33.792 \\
        5 & 29.899 & 1.45 & 0.60 & 720.00 & 217.728 & 20.736 \\
        6 & 45.000 & 0.01 & 0    & 15.12  & 23.328  & 0.467 \\
        \hline
    \end{tabular}
    \caption{Parameters of the machine example.}
    \label{tab:machine-parameters}
\end{table}

We consider two experiments. In the first, we fix \(\eta\) and
vary \(f\) to compare the perturbed Pareto front with three sensitivity
approximations. In the second, we fix \(f\) and vary \(\eta\), using the local
sensitivity model to determine the efficiency improvement required to reach a
prescribed resource target. The problems are convex over the considered domain
for the fixed \(\eta\) used in the first experiment and the fixed \(f\) used in
the second, respectively, and both satisfy Slater's condition; we omit the
direct verification for brevity. All nonlinear problems are formulated in CVXPY and solved using Clarabel;
Pareto fronts are computed by sampling the cost axis and minimizing power at
each fixed cost. Since Slater's condition holds, we can obtain the tangent model
by linearizing the active constraints; see
\cref{app:finite-constraint-design-problems}.
The sensitivity Pareto front is obtained by fixing five
components of \(\Delta x\) at the bounds
\(-\OpPoint{x}_i\leq\Delta x_i\leq1-\OpPoint{x}_i\) and determining the sixth
from
\(\sum_{i=1}^{6}
Q_i(\OpPoint{\eta})k_i e^{-k_i\OpPoint{x}_i}\Delta x_i=\Delta f\).

\paragraph{Sensitivity prediction}

\cref{fig:machine-production-requirement-sensitivity} compares the
perturbed nonlinear solution with three sensitivity approximations: the
system-level surrogate
\(\GenericQuery_{\CDPGraph^{T\Delta}_{\InternalRealization}}\) from
\cref{cor:finite-variations-composition}, an approximation obtained by
propagating samples of the first local sensitivity
surrogate through the second, and an approximation obtained by intersecting
sensitivity surrogates constructed at multiple operating points, as described in
\cref{rem:finite-variation-query-intersection}.

We set
\(\OpPoint{x}=(1,1,1,1,1,0)\) and \(\OpPoint{\eta}=0.30\), yielding
\((\OpPoint{f},\OpPoint{\eta},\OpPoint{c},\OpPoint{p})
\approx(96.4,0.30,1920,2009)\).
We then perturb the production requirement by \(\Delta f=-0.8\), giving
\(\OpPoint{f}+\Delta f\approx95.6\).
The operating point is deliberately chosen away from the center of the Pareto
front, making the local nature of the approximation visible: the sensitivity
front is more accurate near the operating point and progressively less accurate
farther away.

Next, for each of the
\(\binom{6}{2}=15\) machine pairs \((i,j)\), we set \(\Delta x_k=0\) for
\(k\notin\{i,j\}\) and take the midpoint of the feasible segment
\(m_i\Delta x_i+m_j\Delta x_j=\Delta f\), with
\(m_i:=\frac{\partial q_i}{\partial x_i}
(\OpPoint{x}_i;\OpPoint{\eta})
=Q_i(\OpPoint{\eta})k_i e^{-k_i\OpPoint{x}_i}\),
subject to the corresponding box constraints. We propagate the resulting
samples \(\Delta x^{(k)}\) through the second tangential block via
\(\Delta c^{(k)}=a^\top\Delta x^{(k)}\) and
\(\Delta p^{(k)}=g^\top\Delta x^{(k)}\), where
\(g_i=b_i+2d_i\OpPoint{x}_i\),
and retain their minimal elements in \((\Delta c,\Delta p)\).
Finally, for the approximation of
\cref{rem:finite-variation-query-intersection}, we intersect sensitivity
surrogates constructed at five operating points selected uniformly in cost,
including both extremes.

\paragraph{Iterative redesign}

We next seek the minimum efficiency \(\eta\) required to make
\(r_{\mathrm{goal}}=(c_{\mathrm{goal}},p_{\mathrm{goal}})=(1880,1990)\)
feasible while keeping production fixed at \(\OpPoint{f}\).
We compare iterative redesign using the proposed sensitivity model with
bisection, a simple baseline applicable here because increasing \(\eta\)
strictly increases the production of every proprietary machine in use
(\(\gamma_i>0\)) without increasing its resource requirements.

At efficiency \(\eta_k\), the sensitivity-based method selects the Pareto point
\(r_k\) minimizing \(\|r_k-r_{\mathrm{goal}}\|_2\), computes
\(\Delta\eta
=
\min
\GenericFiniteTangentQuery_{\OpPoint{f},\eta_k,r_k}
(\Delta f=0,\Delta r=r_{\mathrm{goal}}-r_k)\), and sets
\(\eta_{k+1}=\eta_k+\Delta\eta\). The query enforces
\(\eta_k+\Delta\eta\leq\eta_{\max}\);
after recomputing the nonlinear front,
we proceed to the next iteration.

Bisection starts from
\([\eta_{\mathrm{lo}},\eta_{\mathrm{hi}}]=[0.30,0.35]\), which brackets the minimum
efficiency required to make the target feasible.
At each iteration, we solve the nonlinear \gls{cdp} at
\(\eta=(\eta_{\mathrm{lo}}+\eta_{\mathrm{hi}})/2\).
If the target is feasible, we set \(\eta_{\mathrm{hi}}=\eta\);
otherwise, we set \(\eta_{\mathrm{lo}}=\eta\).

\cref{fig:iterative-redesign-convergence} compares the two methods over
10 iterations. By
\cref{thm:finite-variation-outer-approximation}, the sensitivity front is
an outer approximation of the perturbed problem, so the predicted efficiency
variation does not exceed the variation required to reach the target.
The sensitivity-based method terminates after three iterations, as the remaining
efficiency variation becomes smaller than the numerical accuracy of the solver
and the computed tangent update is \(\Delta\eta=0\).

The distance in \cref{fig:iterative-redesign-convergence} is the Euclidean
distance from \(r_{\mathrm{goal}}\) to the Pareto front.
For the sensitivity-based method, \(r_{\mathrm{goal}}\) remains infeasible,
so we compute \(\min_r \|r-r_{\mathrm{goal}}\|_2\) subject to feasibility; in
all evaluations, the minimizer lies on the Pareto front.
For bisection, \(r_{\mathrm{goal}}\) may already be feasible, so its distance
to the feasible set can be zero even when its distance to the Pareto front is not.
We therefore estimate the latter using \(10^5\) samples of the nonlinear Pareto
front. Since the resulting values remain above the discretization scale, we
consider this approximation sufficiently accurate.

\begin{figure}[t]
    \centering
    \includegraphics[width=0.95\columnwidth]{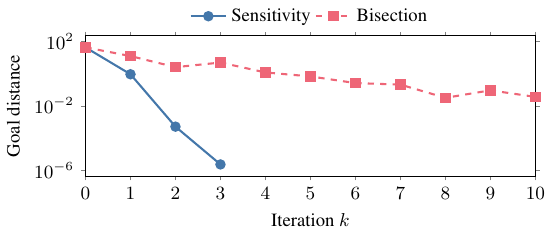}
    \caption{Distance to the goal as a function of the number of iterations.}
    \label{fig:iterative-redesign-convergence}
\end{figure}

\subsection{Drone: finite variations under nonconvex regime shifts}
\label{subsec:drone-example}

\pgfplotstableread[
    col sep=comma
]{data/timing_drone_fronts.csv}\dronetimings

\pgfplotstablegetelem{0}{time_seconds}\of{\dronetimings}
\edef\droneNonlinearTime{\pgfplotsretval}

\pgfplotstablegetelem{1}{time_seconds}\of{\dronetimings}
\edef\droneLocalTimeA{\pgfplotsretval}
\pgfplotstablegetelem{2}{time_seconds}\of{\dronetimings}
\edef\droneLocalTimeB{\pgfplotsretval}
\pgfplotstablegetelem{3}{time_seconds}\of{\dronetimings}
\edef\droneLocalTimeC{\pgfplotsretval}

\pgfmathparse{
    (\droneLocalTimeA+\droneLocalTimeB+\droneLocalTimeC)/3
}
\edef\droneLocalTime{\pgfmathresult}

\pgfplotstablegetelem{4}{time_seconds}\of{\dronetimings}
\edef\dronePointwiseTime{\pgfplotsretval}

\begin{figure}[t]
    \centering
    \includegraphics[
        width=\linewidth
    ]{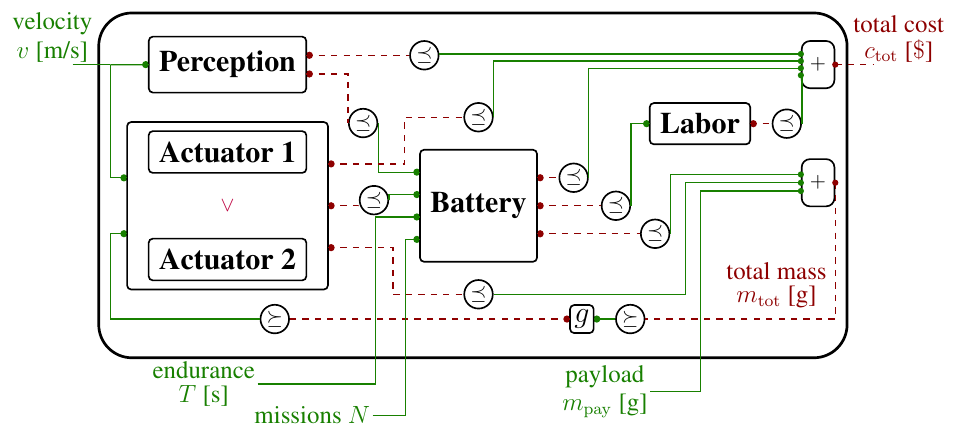}
    \caption{Co-design architecture of the drone example.}
    \label{fig:drone-codesign}
\end{figure}

\begin{table}[t]
    \centering
    \begin{tabular}{c|c|rr}
        \hline
        Parameter & Unit & \(i=1\) & \(i=2\) \\
        \hline
        \(m_{\mathrm{ref},i}\) & g & 100 & 100 \\
        \(p_{0,i}\) & W & 8 & 18 \\
        \(p_{1,i}\) & W/N & 2 & 1.5 \\
        \(p_{2,i}\) & W/N\(^{2}\) & 0.22 & 0.055 \\
        \(p_{3,i}\) & \(\mathrm{W}/[\mathrm{N}(\mathrm{m}/\mathrm{s})^2]\) & 1.5 & 0.5 \\
        \(c_{0,i}\) & USD & 30 & 45 \\
        \(c_{m,i}\) & USD/g & 0.25 & 0.25 \\
        \hline
    \end{tabular}
    \caption{Parameters of the two actuator technologies.}
    \label{tab:drone-actuator-parameters}
\end{table}

We finally consider the drone \gls{cdp} shown in
\cref{fig:drone-codesign}, based on the model of
Censi~\cite{censi2017uncertainty}, to explore bounded linearizations,
motivated by \cref{sec:finite-variations}, for a larger, nonconvex \gls{cdp}.
For some quantities, the variation boxes are left partly implicit here; their
unspecified bounds are taken sufficiently large that the operating points
considered below lie in their interiors.

The system consists of perception, actuation, battery, labor, and gravity
blocks, with the actuation block representing a choice between two
alternative actuator technologies.
The external functionalities are the payload mass \(m_{\mathrm{pay}}\),
endurance \(T\), velocity \(v\), and number of missions \(N\), while the
external resource is the total cost \(c_{\mathrm{tot}}\).

For actuator technology \(i\in\{1,2\}\), distinguish its actual mass
\(s_i\) from the resource allowances \((P_i,m_i,c_i)\).
The actuator \gls{dp} consists of the allowances for which there exists
\(s_i\in[40,220]\) g satisfying
\[
\begin{split}
m_i&\geq s_i,\qquad c_i\geq c_{0,i}+c_{m,i}s_i,\\
P_i s_i&\geq m_{\mathrm{ref},i}
(p_{0,i}+p_{1,i}L+p_{2,i}L^2+p_{3,i}Lv^2).
\end{split}
\]
Existentially eliminating \(s_i\) gives a relation upper in all three
resource allowances. The numerical resource boxes impose
\(40\leq m_i\leq220\) g, \(0\leq P_i\leq500\) W, and
\(0\leq c_i\leq250\) USD.
The two actuator \glspl{dp} are combined by union.
We denote the mass, power, and cost of the selected actuator by
\(m_{\mathrm{act}}\), \(P_{\mathrm{act}}\), and \(c_{\mathrm{act}}\),
respectively.
The actuator parameters are given in
\cref{tab:drone-actuator-parameters}.
The perception block requires power
\(P_{\mathrm{per}}\geq 5+2v^2\) W and cost
\(c_{\mathrm{per}}\geq 5+2.5v^2\) USD to provide velocity \(v\).
Writing \(E=T(P_{\mathrm{act}}+P_{\mathrm{per}})\) and
\(k(N)=\lceil N/750\rceil\), the battery requires mass
\(m_{\mathrm{bat}}\geq
E/702\) g and a replacement allowance \(n_{\mathrm{rep}}\geq k(N)\).
Its cost satisfies
\(c_{\mathrm{bat}}\geq k(N)
\max\{E/(2.84\cdot3600),12(E/10^5)^2\}\) USD.
Thus, cost depends on the required count, rather than its allowance.
The labor and gravity blocks are linear, with
\(c_{\mathrm{lab}}\geq 10n_{\mathrm{rep}}\) USD and
\(L\geq 9.81\,m_{\mathrm{tot}}/1000\) N, where
\(m_{\mathrm{tot}}\geq
m_{\mathrm{pay}}+m_{\mathrm{bat}}+m_{\mathrm{act}}\).
is the internal carried-mass requirement.

At the system level, the component costs are aggregated as
\(c_{\mathrm{tot}}\geq
c_{\mathrm{act}}+c_{\mathrm{bat}}+c_{\mathrm{per}}+c_{\mathrm{lab}}\).
Together with the interconnections above, this relation defines the
system-level feasible cost set
for each \((m_{\mathrm{pay}},T,v,N)\).

We consider two functionality specifications,
\((T,v,N)=(200,3,1000)\) and \((400,2.5,1000)\).
For each specification, we solve the nonlinear \gls{cdp} on a uniform grid
of \(100\) payload values \(m_{\mathrm{pay}}\in[100,2900]\) g using
\texttt{codesign-mcdp}~\cite{briat2026codesignmcdp}.
Since the solver represents continuous Pareto fronts through finite samples,
for each actuator technology, we use a uniform grid of \(100\) mass values
over \([40,220]\) g.
At each payload, the solver returns the minimum achievable
system-level cost $c_{\mathrm{tot}}$.

The mass grid approximates a continuous range of possible masses, while
the numerical sensitivity models linearize the active smooth constraints
of the corresponding continuous model for the selected technology.
At the chosen operating points, $\OpPoint{s}_i=\OpPoint{m}_i$, so the
mass-allowance constraint reduces to $\Delta s_i\leq\Delta m_i$.
Because reducing the mass allowance relaxes the downstream gravity and
total-mass constraints, setting $\Delta m_i=\Delta s_i$, as implemented
in the code, leaves the external projection unchanged.
The corresponding bounds on $m_i$ also preserve the physical bounds
on $s_i$. Throughout these experiments, $\Delta N=0$ and $k(N)=2$,
allowing us to eliminate replacement and labor variations.
The nonconvex example provides an empirical approximation, for which
the convex outer-approximation guarantee of
\cref{cor:finite-variations-composition} does not apply.

We first reconstruct the second payload--cost curve pointwise from all
\(100\) baseline solutions.
At each point, we fix \(\Delta m_{\mathrm{pay}}=0\),
\(\Delta T=200\) s, \(\Delta v=-0.5\) m/s, and \(\Delta N=0\),
and minimize \(\Delta c_{\mathrm{tot}}\) over this bounded implementation
linearization.
Each problem is therefore an \gls{lp}, which we solve using
SciPy's \texttt{linprog} with the HiGHS backend, yielding the prediction
\(c_{\mathrm{tot}}+\Delta c_{\mathrm{tot}}\).

To highlight the structure captured by the pointwise sensitivity predictions,
we filter out locally nonmonotone samples within a \(200\) g payload window,
retaining the lower-cost prediction.
\cref{fig:drone-pointwise-sensitivity-regime} compares the resulting
curves with the two nonlinear payload--cost curves.
The shaded regions indicate changes in actuator technology or in the active
battery-cost branch between the baseline and perturbed nonlinear solutions.
The resulting curve shows that the sensitivity construction preserves the
overall shape of the perturbed Pareto front despite an error in its position.

We then consider selected operating points along the first payload--cost curve
under the same perturbation.
At each point, we construct the same bounded linearization and compute
its full local Pareto front with Bensolve~\cite{lohne2017vectorlinearprogramsolver}.
\cref{fig:drone-sensitivity-fronts} shows the resulting fronts in the
original coordinates, restricted for visualization to a box around each
operating point.

The sensitivity computations are also substantially cheaper than recomputing
the nonlinear \gls{cdp}.
Computing one nonlinear payload--cost curve takes on average
\(\pgfmathprintnumber[fixed,precision=0]{\droneNonlinearTime}\) s, whereas
computing a local Pareto front takes
\(\pgfmathprintnumber[fixed,precision=3]{\droneLocalTime}\) s on average
and the \(100\) pointwise \glspl{lp} take only
\(\pgfmathprintnumber[fixed,precision=2]{\dronePointwiseTime}\) s in total.

\begin{figure}[t]
    \centering
    \includegraphics[
        width=\linewidth
        ]{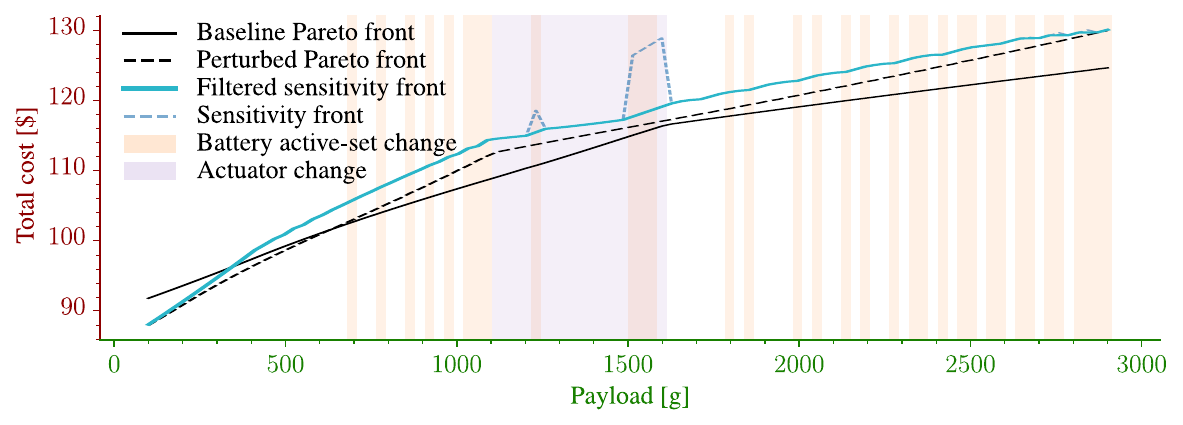}
        \caption{
        Pointwise sensitivity prediction of the perturbed payload--cost
        curve.
        Shaded regions indicate changes in the active battery-cost branch
        and actuator technology between the baseline and perturbed
        nonlinear solutions.
        }
    \label{fig:drone-pointwise-sensitivity-regime}
\end{figure}

\begin{figure}[t]
    \centering
    \includegraphics[
        width=\linewidth
    ]{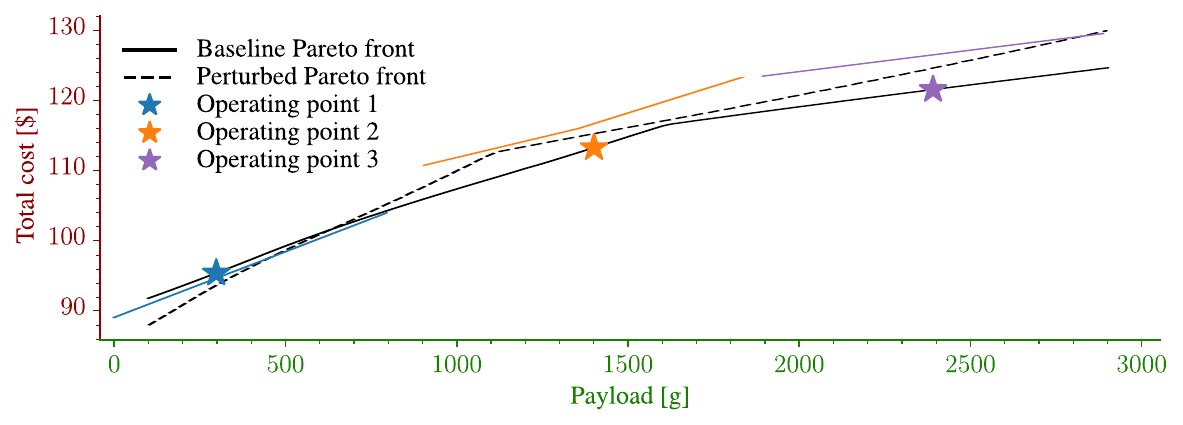}
    \caption{
        Local sensitivity fronts computed at selected operating points
        along the nominal payload--cost curve.
    }
    \label{fig:drone-sensitivity-fronts}
\end{figure}

%% file: sections/conclusion.tex
\section{Conclusion}
\label{sec:conclusion}
This paper develops a first-order sensitivity analysis for monotone co-design.
Viewing \glspl{dp} as sets of realizable functionality--resource pairs, we
describe their local behavior through tangent cones
and obtain \glspl{tdp}, which are \glspl{dp} over variations.
The resulting \glspl{tdp} can be queried within the same framework,
with the corresponding sensitivity queries characterized through
graphical derivatives.
The resulting sensitivity framework is compositional, i.e.,
system-level sensitivity can be modeled as a \gls{tcdp}
assembled from families of component \glspl{tdp} using the same
composition operations and interconnection structure as the original \gls{cdp}.
We establish conditions under which sensitivity analysis
commutes with individual co-design composition operations
and illustrate how these conditions can be verified
for an interconnected system using the rover example.
For convex \glspl{dp}, restricting variations to admissible boxes
yields, from a single internal realization, an outer approximation of the exact
finite variations that can be propagated through systems without union
compositions.

On the computational side, under the stated \glspl{cq},
finite-constraint \glspl{dp} yield polyhedral \glspl{tdp}, whose queries
can be solved using scalable methods for \glspl{ldp}~\cite{cai2026scalable}.
In the drone example, pointwise sensitivity predictions yield a speedup of
approximately four orders of magnitude over computing the nonlinear payload--cost curve.

This work opens several fertile directions for future research. To start, the current framework is limited to Euclidean spaces, but may be extended to the more general \glspl{poset} supported
by monotone co-design. 
Moreover, the machine example illustrates how sensitivity information can
guide iterative redesign, suggesting a potential application to
adaptive sampling in online learning for co-design~\cite{alharbi2026compositional}.
Finally, an open question with considerable potential for practical impact
is how sensitivity analysis can support
uncertainty propagation~\cite{huang2025composableparametricuncertaintysystems,
huang2026distributional}, for instance describing variations of probability distributions in the
Wasserstein space~\cite{lanzetti2024variational}.

%% file: sections/appendix_tangential_composition_reductions.tex
\section{Commutation results for tangential composition}
\label{app:tangential-composition-commutation}

This appendix collects auxiliary results for
\cref{sec:local-computation-system-level-sensitivity}.

\subsection{Ordering-cone completion}

This subsection shows that the ordering-cone completion used in the
definition of a \gls{tdp} commutes with tangential composition.

To start, we extend
\cref{thm:tangent-model-design-problem} to boundary operating points by
restricting to variations that keep the functionality and resource coordinates
inside the box to first order.

\begin{lemma}[Upperness of the tangent cone within the box]
\label{lem:relative-upperness-tangent}

Let
\(
\DesignProblem\subseteq\Func\times\ThetaSpace\times\Res
\)
be a \gls{dp}, and let
\(
(\OpPoint f,\OpPoint{\theta},\OpPoint r)\in\DesignProblem.
\)
If
\(
(\delta f,\delta\theta,\delta r)
\in
\Tan_{\DesignProblem}
(\OpPoint f,\OpPoint{\theta},\OpPoint r)
\),
and
\(
(\delta f',\delta\theta,\delta r')
\in
\Tan_{\Func\times\ThetaSpace\times\Res}
(\OpPoint f,\OpPoint{\theta},\OpPoint r)
\)
with
\(
\delta f'\preceq\delta f
\)
and
\(
\delta r\preceq\delta r',
\)
then
\(
(\delta f',\delta\theta,\delta r')
\in
\Tan_{\DesignProblem}
(\OpPoint f,\OpPoint{\theta},\OpPoint r).
\)
\end{lemma}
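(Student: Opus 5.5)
The plan is to adapt the proof of \cref{thm:tangent-model-design-problem}. The only change is how we show that the shifted sequences stay inside the box: instead of interiority, we use the fact that \((\delta f',\delta\theta,\delta r')\) is tangent to the box. We will also exploit the product structure of the tangent cone to a box (\cref{rem:tangent-cone-box}).

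Start from sequences \(h_k\downarrow0\) and \((\delta f_k,\delta\theta_k,\delta r_k)\to(\delta f,\delta\theta,\delta r)\) with \((\OpPoint f+h_k\delta f_k,\OpPoint\theta+h_k\delta\theta_k,\OpPoint r+h_k\delta r_k)\in\DesignProblem\). Naive shifting by \(\delta f'-\delta f\) may exit the box, so define componentwise
\[
f'_k:=\max\{\underline f,\ \OpPoint f+h_k(\delta f_k+\delta f'-\delta f)\},\qquad
r'_k:=\min\{\overline r,\ \OpPoint r+h_k(\delta r_k+\delta r'-\delta r)\}.
\]
This is the same clipping device used in the proof of \cref{lem:tdp-tangent-ordering-cone-extension}.

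\textbf{Step 1: the clipped points stay in the box and are ordered correctly.} We have \(f'_k\preceq\overline f\) for large \(k\), coordinatewise:
\begin{itemize}
\item If \(\OpPoint f_i<\overline f_i\), this holds since the perturbation is \(O(h_k)\).
\item If \(\OpPoint f_i=\overline f_i\), tangency gives \(\delta f'_i\le0\). Also \(\OpPoint f_i+h_k\delta f_{k,i}\le\overline f_i\), because the original point lies in the box, and \(\delta f'_i-\delta f_i\le0\). Hence the unclipped value is already \(\le\overline f_i\).
\end{itemize}
Thus \(f'_k\in\Func\). Moreover, \(f'_k\preceq\OpPoint f+h_k\delta f_k\), because both arguments of the max are \(\preceq\) that point: the first since \(\OpPoint f+h_k\delta f_k\succeq\underline f\), the second since \(\delta f'\preceq\delta f\). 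The same reasoning with reversed orders applies to \(r'_k\). Upperness of the fiber \(\DesignProblem_{\OpPoint\theta+h_k\delta\theta_k}\) in \(\Func^{\mathrm{op}}\times\Res\) then gives \((f'_k,\OpPoint\theta+h_k\delta\theta_k,r'_k)\in\DesignProblem\).

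\textbf{Step 2: the rescaled differences converge to the right limit.} This is the main point to check, and it is where tangency to the box is essential. Set \(\delta f'_k:=(f'_k-\OpPoint f)/h_k\), so that
\[
\delta f'_{k,i}=\max\{(\underline f_i-\OpPoint f_i)/h_k,\ \delta f_{k,i}+\delta f'_i-\delta f_i\}.
\]
Consider each coordinate \(i\):
\begin{itemize}
\item If \(\OpPoint f_i>\underline f_i\), the first term tends to \(-\infty\), so \(\delta f'_{k,i}\to\delta f'_i\).
\item If \(\OpPoint f_i=\underline f_i\), the first term is \(0\). Tangency to the box gives \(\delta f'_i\ge0\), so \(\delta f'_{k,i}\to\max\{0,\delta f'_i\}=\delta f'_i\).
\end{itemize}
Symmetric arguments give \(\delta r'_k\to\delta r'\), and we keep \(\delta\theta_k\to\delta\theta\). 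By \cref{def:tangent-cone}, therefore, \((\delta f',\delta\theta,\delta r')\in\Tan_{\DesignProblem}(\OpPoint f,\OpPoint\theta,\OpPoint r)\).

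In the general setting the limit of the clipped sequence differs from \(\delta f'\) by an element of \(K\). Here the box-tangency hypothesis removes exactly that discrepancy, which is what strengthens the conclusion.
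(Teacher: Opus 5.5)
Your proof is correct, but it builds a different modified sequence than the paper does.

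\textbf{The paper's construction.} The paper does not shift and clip. It takes the componentwise meet \(\delta f_k':=\min\{\delta f_k,\delta f'\}\) and join \(\delta r_k':=\max\{\delta r_k,\delta r'\}\). Convergence to \((\delta f',\delta r')\) is then immediate from continuity of \(\min\) and \(\max\), together with \(\delta f'\preceq\delta f\) and \(\delta r\preceq\delta r'\), so no coordinatewise case analysis is needed. Tangency to the box is used only to ensure that the straight steps \(\tilde f+h_k\delta f'\) and \(\tilde r+h_k\delta r'\) lie in the box for large \(k\). Box membership of the modified points then follows because a box is closed under componentwise \(\min\) and \(\max\).

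\textbf{Your construction.} Your shift-and-clip device comes from the proof of \cref{lem:tdp-tangent-ordering-cone-extension}, and it reverses the roles. Box membership and the domination \(f'_k\preceq\tilde f+h_k\delta f_k\), \(\tilde r+h_k\delta r_k\preceq r'_k\) hold by construction. Tangency is needed only in Step~2, at coordinates where the lower bound of \(f\) or the upper bound of \(r\) is active, to show that the clipping vanishes in the limit.

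\textbf{Comparison.} The paper's version is a bit shorter; yours makes explicit which box constraints the tangency hypothesis actually serves.

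\textbf{Minor simplification.} The case split in Step~1 for \(f'_k\preceq\overline f\) is unnecessary. The unclipped value is always \(\preceq\tilde f+h_k\delta f_k\preceq\overline f\), so the fact \(\delta f'_i\le 0\) at an active upper bound is never used.
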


\begin{proof}
Take
\(
(\delta f_k,\delta\theta_k,\delta r_k)
\to
(\delta f,\delta\theta,\delta r)
\)
and
\(
\PerturbationScale_k\downarrow0
\)
such that
\(
(\OpPoint f+\PerturbationScale_k\delta f_k,
 \OpPoint{\theta}+\PerturbationScale_k\delta\theta_k,
 \OpPoint r+\PerturbationScale_k\delta r_k)
\in\DesignProblem.
\)
Set
\(
\delta f_k':=\min\{\delta f_k,\delta f'\}
\)
and
\(
\delta r_k':=\max\{\delta r_k,\delta r'\}
\),
componentwise. Then
\(
\delta f_k'\to\delta f'
\)
and
\(
\delta r_k'\to\delta r'.
\)
Since
\(
(\delta f',\delta\theta,\delta r')
\)
is tangent to the box, the straight perturbations
\(
\OpPoint f+\PerturbationScale_k\delta f'
\)
and
\(
\OpPoint r+\PerturbationScale_k\delta r'
\)
belong to \(\Func\) and \(\Res\) for all sufficiently large \(k\).
The original perturbed points also belong to the boxes; hence their
componentwise minimum and maximum do as well. Therefore
\(
\OpPoint f+\PerturbationScale_k\delta f_k'\in\Func
\)
and
\(
\OpPoint r+\PerturbationScale_k\delta r_k'\in\Res.
\)
By upperness of \(\DesignProblem\),
\(
(\OpPoint f+\PerturbationScale_k\delta f_k',
 \OpPoint{\theta}+\PerturbationScale_k\delta\theta_k,
 \OpPoint r+\PerturbationScale_k\delta r_k')
\in\DesignProblem.
\)
Hence
\(
(\delta f',\delta\theta,\delta r')
\in
\Tan_{\DesignProblem}
(\OpPoint f,\OpPoint{\theta},\OpPoint r).
\)
\end{proof}

\begin{lemma}[Commutation with ordering-cone completion]
\label{lem:ordering-cone-composition-commutation}
Let \(\TCDPGraph\) be a \gls{tcdp} as in
\cref{def:tangential-codesign-problem}.
Composition according to
\cref{def:composition-tangential-design-problems}
commutes with the ordering-cone completion of
\cref{def:tangential-design-problem}.
More precisely, if \(\tdp\) is obtained by composing the local \glspl{tdp} and
\(\mathcal T\) by applying the same composition operations to the corresponding
tangent cones, then
\(\tdp=\mathcal T+K_{\CDPGraph}\), where \(K_{\CDPGraph}\) is the ordering cone
of the system-level functionality--resource space.
\end{lemma}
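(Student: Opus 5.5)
We argue by structural induction on the composition tree, so it suffices to treat one composition operation at a time. For each operation, the local inputs are \(\tdp_i=\mathcal T_i+K_i\), where \(\mathcal T_i\) is the local tangent cone (or the result of composing tangent cones in a subtree) and \(K_i\) is the corresponding ordering cone. We must show that composing the \(\tdp_i\) gives the composition of the \(\mathcal T_i\) plus the system-level cone \(K_{\CDPGraph}\). The inclusion \(\mathcal T+K_{\CDPGraph}\subseteq\tdp\) is the easy half. Since \(\mathcal T_i\subseteq\tdp_i\) and every composition in \cref{def:composition-operations} is monotone under inclusion, \(\mathcal T\subseteq\tdp\). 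By \cref{thm:tangent-model-design-problem}'s argument, or directly because the composed \(\tdp\) is a \gls{dp} over the full ambient spaces, \(\tdp\) is upper, so \(\tdp+K_{\CDPGraph}=\tdp\).

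For the reverse inclusion we handle the operations in turn. Parallel is immediate, since \((\mathcal T_1+K_1)\times(\mathcal T_2+K_2)=(\mathcal T_1\times\mathcal T_2)+(K_1\times K_2)\). For union, we compute branch by branch, using only the branches containing the operating point, and obtain \(\bigcup_i(\mathcal T_i+K)=(\bigcup_i\mathcal T_i)+K\). Intersection needs an actual argument. Take \(z\in(\mathcal T_1+K)\cap(\mathcal T_2+K)\), so \(z=t_i+\kappa_i\). We want a single \(t\in\mathcal T_1\cap\mathcal T_2\) with \(z-t\in K\). Following the proof of \cref{lem:tdp-tangent-ordering-cone-extension}, we replace \(t_i\) by its coordinatewise clipping toward \(z\) against the box tangent cone, and use \cref{lem:relative-upperness-tangent}: the point \(t^\ast\) obtained from \(z\) by clipping functionalities with \(\max\) and resources with \(\min\) against the box tangent cone \(\Tan_{\Func\times\ThetaSpace\times\Res}\) is dominated in the correct direction by each \(t_i\). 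By \cref{lem:relative-upperness-tangent} it then lies in both \(\mathcal T_i\), and \(z-t^\ast\in K\). This clipping trick is the key device. Applied to the external coordinates, it shows that for any composed object, \(\tdp=(\tdp\cap\Tan_{\mathrm{box}})+K\) and \(\tdp\cap\Tan_{\mathrm{box}}\subseteq\mathcal T\) whenever the \(\mathcal T_i\) are upper within the box.

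Series and feedback are the main obstacle, because internal variables are involved. Fix a realization \(\rho\) and an element \(z\) of \(\DesignProblem_{\TCDPGraph_\rho}\), witnessed by an internal variation \(\delta q\) with \((\delta f,\delta\theta_1,\delta q)\in\mathcal T_1+K_1\) and \((\delta q,\delta\theta_2,\delta r)\in\mathcal T_2+K_2\). The \(K\)-components acting on \(\delta q\) point in opposite directions in the two blocks: as a resource of block 1, \(\delta q\) may have been increased, and as a functionality of block 2 it may have been decreased. We therefore choose a new internal variation \(\delta q^\ast\) between the two "raw" values \(q_1\) and \(q_2\), with \(q_1\preceq\delta q\preceq q_2\). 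Any value in this interval works for both blocks after absorbing the slack into the \(K\)-parts, but it must also be clipped to be tangent to the internal box \(\IntermediateSpace\) at \(\OpPoint q\). Since \(q_1\) and \(q_2\) can each be clipped toward the box using \cref{lem:relative-upperness-tangent}, we take \(\delta q^\ast\) as the clipped \(\delta q\). This moves \(q_1\) up or \(q_2\) down only within the box, so both blocks remain feasible with raw tangent vectors. The external coordinates are then clipped as in the intersection case.

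Feedback is handled identically, with \(q_1\) and \(q_2\) being the two copies of the loop variable. Finally, we pass to the union over \(\rho\) and the closure in \eqref{eq:tdp-series-composition}. The union commutes with \(+K\) trivially. For the closure, we use that \(\operatorname{cl}(A)+K\subseteq\operatorname{cl}(A+K)\), and in the other direction we apply the clipping map, which is continuous and \(1\)-Lipschitz, to a convergent sequence, so that limits of clipped representatives stay in \(\operatorname{cl}(\mathcal T)\). We expect verifying this closure step, together with the box clipping of internal variables, to be the delicate part of the argument.
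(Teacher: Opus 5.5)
Your proposal is correct and follows essentially the paper's proof. Both go one operation at a time: distributivity of Minkowski sums over products and unions handles parallel and union, \cref{lem:relative-upperness-tangent} produces a common raw representative for intersection, series, and feedback, and componentwise clipping to the tangent cone of the composed box lets the closure commute with \(+K\). The differences are cosmetic. The paper takes \(\min\{\delta f_1,\delta f_2\}\) and \(\max\{\delta r_1,\delta r_2\}\) for intersection, and uses the raw \(\delta q_1\) (series) or \(\delta q_2\) (feedback) as the new internal variation; you instead clip \(z\) itself and keep \(\delta q\), which in fact needs no clipping, since it lies between two box-tangent vectors.
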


\begin{proof}
It suffices to verify the claim for each composition operation.
Fix compatible operating points and denote the corresponding tangent cones by
\(T_i\). For series and feedback, let \(\IntermediateSpace\) denote the
internal-variable box.

\emph{Parallel:}
Since Cartesian products distribute over Minkowski sums,
\((T_1+K_1)\otimes(T_2+K_2)
=(T_1\otimes T_2)+K_{\otimes}\),
where \(K_{\otimes}=K_1\times K_2\), up to the canonical regrouping of
coordinates, is the ordering cone of the composed space.

\emph{Intersection:}
Since both tangent cones are defined over the same box, they have the same
ordering cone \(K\). The inclusion
\(
(T_1\cap T_2)+K
\subseteq
(T_1+K)\cap(T_2+K)
\)
is immediate. For the reverse inclusion, let
\(
(\delta f,\delta\theta,\delta r)
\in
(T_1+K)\cap(T_2+K).
\)
Then, for \(i=1,2\), there exist
\(
(\delta f_i,\delta\theta,\delta r_i)\in T_i
\)
such that
\(
\delta f\preceq\delta f_i
\)
and
\(
\delta r_i\preceq\delta r.
\)

Set, componentwise,
\(
\delta f_{\min}:=\min\{\delta f_1,\delta f_2\}
\)
and
\(
\delta r_{\max}:=\max\{\delta r_1,\delta r_2\}.
\)
Since both original variations are tangent to the common box,
\(
(\delta f_{\min},\delta\theta,\delta r_{\max})
\)
is also tangent to it. Hence, by
\cref{lem:relative-upperness-tangent},
\(
(\delta f_{\min},\delta\theta,\delta r_{\max})
\in T_1\cap T_2.
\)
Moreover,
\(
\delta f\preceq\delta f_{\min}
\)
and
\(
\delta r_{\max}\preceq\delta r,
\)
so
\(
(\delta f,\delta\theta,\delta r)
\in
(T_1\cap T_2)+K.
\)
Therefore
\(
(T_1+K)\wedge(T_2+K)
=
(T_1\wedge T_2)+K.
\)

\emph{Union:}
Since the Minkowski sum commutes with unions,
\((T_1+K)\vee(T_2+K)
=(T_1\vee T_2)+K\).
   
\emph{Series:}
Fix a compatible internal realization \(\OpPoint q\). We first show
\(
(T_2+K_2)\circ(T_1+K_1)
=
(T_2\circ T_1)+K_{\circ},
\)
where \(K_{\circ}\) is the ordering cone of the composed space.
The inclusion
\((T_2\circ T_1)+K_{\circ}
\subseteq (T_2+K_2)\circ(T_1+K_1)\)
is immediate.

For the reverse inclusion, let
\((\delta f,\delta\theta_1,\delta\theta_2,\delta r)
\in (T_2+K_2)\circ(T_1+K_1)\).
Then there exist an internal variation \(\delta q\) and tangent directions
\((\delta f_1,\delta\theta_1,\delta q_1)\in T_1\) and
\((\delta q_2,\delta\theta_2,\delta r_2)\in T_2\) such that
\(\delta f\preceq\delta f_1\),
\(\delta q_1\preceq\delta q\preceq\delta q_2\), and
\(\delta r_2\preceq\delta r\).

Since \(\delta q_1\in\Tan_{\IntermediateSpace}(\OpPoint q)\) and
\(\delta q_1\preceq\delta q_2\),
\cref{lem:relative-upperness-tangent} applied to \(T_2\) gives
\((\delta q_1,\delta\theta_2,\delta r_2)\in T_2\).
Hence
\((\delta f_1,\delta\theta_1,\delta\theta_2,\delta r_2)
\in T_2\circ T_1\), and therefore
\((\delta f,\delta\theta_1,\delta\theta_2,\delta r)
\in (T_2\circ T_1)+K_{\circ}\).

Taking the union over all compatible internal realizations and using that
Minkowski addition commutes with unions gives
\(
\bigcup_{\OpPoint q}
\bigl((T_2+K_2)\circ(T_1+K_1)\bigr)
=
\left(
\bigcup_{\OpPoint q} T_2\circ T_1
\right)+K_{\circ}.
\)
Taking the closures prescribed by
\cref{def:composition-tangential-design-problems}, it remains to show
\(\Closure{\bigcup_{\OpPoint q}T_2\circ T_1}+K_{\circ}
=
\Closure{(\bigcup_{\OpPoint q}T_2\circ T_1)+K_{\circ}}\).
The forward inclusion is immediate.
For the reverse inclusion, take a convergent sequence of elements of
\((\bigcup_{\OpPoint q}T_2\circ T_1)+K_{\circ}\) and clip each term
componentwise to the tangent cone of the composed box. Relative to the
corresponding element of \(\bigcup_{\OpPoint q}T_2\circ T_1\), this only
decreases functionality variations and increases resource variations.
Applying \cref{lem:relative-upperness-tangent} to the external
variations of \(T_1\) and \(T_2\) shows that the clipped sequence remains in the
same union. Its limit belongs to
\(\Closure{\bigcup_{\OpPoint q}T_2\circ T_1}\), while the difference between
the original and clipped limits belongs
to \(K_{\circ}\), giving the reverse inclusion.

\emph{Feedback:}
Fix a compatible internal realization \(\OpPoint q\), and let \(K\) and
\(K_{\Trace}\) denote the ordering cones before and after feedback,
respectively.
The inclusion \(\Trace(T)+K_{\Trace}\subseteq\Trace(T+K)\) is immediate.
For the reverse inclusion, let
\((\delta f,\delta\theta,\delta r)\in\Trace(T+K)\).
Then there exist \(\delta q\) and
\(\bigl((\delta f_0,\delta q_1),\delta\theta,(\delta r_0,\delta q_2)\bigr)\in T\)
such that \(\delta f\preceq\delta f_0\),
\(\delta q_2\preceq\delta q\preceq\delta q_1\), and
\(\delta r_0\preceq\delta r\).
Since \(\delta q_1,\delta q_2\in\Tan_{\IntermediateSpace}(\OpPoint q)\) and
\(\delta q_2\preceq\delta q_1\),
\cref{lem:relative-upperness-tangent} gives
\(\bigl((\delta f_0,\delta q_2),\delta\theta,(\delta r_0,\delta q_2)\bigr)\in T\).
Hence \((\delta f_0,\delta\theta,\delta r_0)\in\Trace(T)\), and therefore
\(\Trace(T+K)=\Trace(T)+K_{\Trace}\).
The union over compatible internal realizations
and the subsequent closure are handled as in the series case.

\end{proof}
\subsection{Splitting parameter variations}

\begin{lemma}[Splitting parameter variations]
\label{lem:parameter-output-regrouping}
Let
\(A,B\subseteq
\TDPQueryInputAmbientSpace
\times\TDPQueryParameterAmbientSpace
\times\TDPQueryOutputAmbientSpace\),
and let
\(\TDPQueryParameterAmbientSpace
=\TDPThetaInputAmbientSpace\times\TDPThetaOutputAmbientSpace\), with
\(\delta\theta=(\delta\theta_{\mathrm{in}},\delta\theta_{\mathrm{out}})\).
For \(C\in\{A,B\}\), define the query in which
all parameter variations are inputs by
\(C^{\mathrm{unsplit}}(\delta x,\delta\theta)
:=
\{\delta y\mid(\delta x,\delta\theta,\delta y)\in C\}\)
and the query in which the parameter variations are split between inputs and outputs by
\(C^{\mathrm{split}}(\delta x,\delta\theta_{\mathrm{in}})
:=
\{(\delta y,\delta\theta_{\mathrm{out}})
\mid(\delta x,\delta\theta_{\mathrm{in}},
\delta\theta_{\mathrm{out}},\delta y)\in C\}\).

Then
\(A^{\mathrm{unsplit}}\subseteq B^{\mathrm{unsplit}}
\Leftrightarrow
A^{\mathrm{split}}\subseteq B^{\mathrm{split}}\),
and
\(A^{\mathrm{unsplit}}=B^{\mathrm{unsplit}}
\Leftrightarrow
A^{\mathrm{split}}=B^{\mathrm{split}}\).
Here, inclusion between set-valued maps is understood pointwise.
\end{lemma}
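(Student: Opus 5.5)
The lemma is essentially a reindexing statement, so the plan is to show that both inclusion statements are equivalent to the same set inclusion \(A\subseteq B\). Equality then follows because it is inclusion in both directions.

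\emph{Step 1: unsplit inclusion is set inclusion.} By definition, \(A^{\mathrm{unsplit}}\subseteq B^{\mathrm{unsplit}}\) means that for every \((\delta x,\delta\theta)\),
\(\{\delta y\mid(\delta x,\delta\theta,\delta y)\in A\}\subseteq\{\delta y\mid(\delta x,\delta\theta,\delta y)\in B\}\).
This says exactly that every triple \((\delta x,\delta\theta,\delta y)\in A\) also lies in \(B\). Conversely, if \(A\subseteq B\), each fiber of \(A\) is contained in the corresponding fiber of \(B\). Hence
\[
A^{\mathrm{unsplit}}\subseteq B^{\mathrm{unsplit}}\;\Leftrightarrow\; A\subseteq B .
\]
The only subtlety is that a point of \(A\) lies in the fiber of \(A\) over its own input coordinates. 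That fiber is nonempty, so no issues arise from empty values of the queries.

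\emph{Step 2: split inclusion is set inclusion.} Write \(\delta\theta=(\delta\theta_{\mathrm{in}},\delta\theta_{\mathrm{out}})\). Then \((\delta x,\delta\theta,\delta y)\in C\) if and only if \((\delta y,\delta\theta_{\mathrm{out}})\in C^{\mathrm{split}}(\delta x,\delta\theta_{\mathrm{in}})\). Thus \(\Graph C^{\mathrm{split}}\) is the image of \(C\) under a coordinate permutation, which is a bijection. The same fiberwise argument as in Step 1 therefore gives
\[
A^{\mathrm{split}}\subseteq B^{\mathrm{split}}\;\Leftrightarrow\; A\subseteq B .
\]

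\emph{Step 3: combine.} Steps 1 and 2 together give the stated equivalence of inclusions. Applying it in both directions, with \(A\) and \(B\) swapped, gives the equivalence of equalities.

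No step is a real obstacle. The only care needed is with the bookkeeping in Step 2, namely identifying the graph of the split query with \(C\) under the permutation that moves \(\delta\theta_{\mathrm{out}}\) to the output side. The point is that both kinds of query are just ways of reading the same underlying subset \(C\), so pointwise inclusion of queries is equivalent to inclusion of the underlying sets.
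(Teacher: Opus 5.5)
Your proposal is correct and follows the same idea as the paper: both query orientations are readings of the same underlying relation, so the claims follow from the definitions. You make this explicit by showing that each pointwise inclusion is equivalent to \(A\subseteq B\), a step the paper leaves implicit.
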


\begin{proof}
Both query orientations represent the same underlying relations \(A\) and
\(B\), differing only in whether \(\delta\theta_{\mathrm{out}}\) is grouped
with the inputs or with the outputs. The stated inclusions and equalities
therefore follow directly from the definitions.
\end{proof}

%% file: sections/appendix_finite_constraint_design_problems.tex
\section{Finite-constraint \glspl{dp}}
\label{app:finite-constraint-design-problems}

We now specialize to the case in which the \gls{dp}
\(\DesignProblem\) is representable by finitely many smooth constraints:
\[
    \DesignProblem
    =
    \left\{
        (f,\theta,r)\in
        \Func\times\ThetaSpace\times\Res
        \;\middle|\;
        g(f,\theta,r)\leq 0
    \right\},
\]
with
\(
    g:\FuncAmbientSpace\times\ThetaAmbientSpace\times\ResAmbientSpace
        \to\mathbb R^{\ConstraintDim}
\)
continuously differentiable, where \(\ConstraintDim\) denotes the number of
inequality constraints and the inequality is understood componentwise.
We call these \glspl{dp} finite-constraint \glspl{dp} for short.

\subsection{Polyhedral representation of \glspl{tdp}}
\label{subsec:finite-constraint-linearized-models}

For an operating point
\((\OpPoint{f},\OpPoint{\theta},\OpPoint{r})\in\DesignProblem\),
define the active index set
\(
I(\OpPoint{f},\OpPoint{\theta},\OpPoint{r})
:=
\left\{
i\in\{1,\ldots,\ConstraintDim\}
\mid
g_i(\OpPoint{f},\OpPoint{\theta},\OpPoint{r})=0
\right\}.
\)

The associated linearized constraint system
\(
L_{\DesignProblem}
(\OpPoint{f},\OpPoint{\theta},\OpPoint{r})
\)
is defined as the set of all variations
\(
(\delta f,\delta\theta,\delta r)
\in
\Tan_{\Func\times\ThetaSpace\times\Res}
(\OpPoint{f},\OpPoint{\theta},\OpPoint{r})
\)
that satisfy
\(
\nabla_f g_i^\top\delta f
+\nabla_\theta g_i^\top\delta\theta
+\nabla_r g_i^\top\delta r
\leq 0
\)
for
\(
i\in I(\OpPoint{f},\OpPoint{\theta},\OpPoint{r})
\),
where \(\nabla g_i\) denotes the gradient of \(g_i\) evaluated at the operating point.

In general, we have
\(
\Tan_{\DesignProblem}
(\OpPoint{f},\OpPoint{\theta},\OpPoint{r})
\subseteq
L_{\DesignProblem}
(\OpPoint{f},\OpPoint{\theta},\OpPoint{r})
\).
If a suitable \gls{cq}~\cite{solodov2011constraintqualifications}
holds for  
\(g(f,\theta,r)\leq 0\) together with the box constraints at 
\((\OpPoint{f},\OpPoint{\theta},\OpPoint{r})\), 
then this inclusion becomes an equality: 
\begin{equation} 
\label{eq:tangent-linearized-feasible-set} 
    \Tan_{\DesignProblem} 
    (\OpPoint{f},\OpPoint{\theta},\OpPoint{r}) 
    = 
    L_{\DesignProblem} 
    (\OpPoint{f},\OpPoint{\theta},\OpPoint{r}). 
\end{equation}

When \eqref{eq:tangent-linearized-feasible-set} holds, the \gls{tdp} can therefore
be computed directly from the linearized constraints as
\(
\tdp_{\OpPoint{f},\OpPoint{\theta},\OpPoint{r}}
=
L_{\DesignProblem}
(\OpPoint{f},\OpPoint{\theta},\OpPoint{r})
+
K.
\)
Since both
\(L_{\DesignProblem}
(\OpPoint{f},\OpPoint{\theta},\OpPoint{r})\)
and \(K\) are polyhedral cones, the resulting \gls{tdp} is polyhedral.

\begin{remark}
\glspl{dp} over Euclidean spaces with polyhedral feasible 
sets have been studied by Cai et al.~\cite{cai2026scalable}, who 
refer to them as \glspl{ldp}. Their queries can be solved exactly 
by reformulating them as \glspl{molp}, which enables substantially
more scalable computation. Thus, for each fixed \(\delta\theta\), 
the corresponding fiber of the polyhedral \gls{tdp} above defines an 
\gls{ldp}. See Cai et al.~\cite{cai2026scalable} for these properties.
\end{remark}

\subsection{\Glsentrylongpl{cq}}
\label{subsec:constraint-qualifications}

Equality~\eqref{eq:tangent-linearized-feasible-set} is Abadie's
\gls{cq}. Several standard \glspl{cq} imply
this property; their relationships are surveyed by
Solodov~\cite{solodov2011constraintqualifications}. Rather than reproduce
that survey, we focus on sufficient conditions that simplify under the
order structure of \glspl{dp}.

A standard sufficient condition for Abadie's \gls{cq} is \gls{mfcq}.
For an operating point
\(
(\OpPoint{f},\OpPoint{\theta},\OpPoint{r})\in\DesignProblem
\),
\gls{mfcq} requires the existence of
\(
(\delta f,\delta\theta,\delta r)
\in
\operatorname{int}
\Tan_{\Func\times\ThetaSpace\times\Res}
(\OpPoint{f},\OpPoint{\theta},\OpPoint{r})
\)
such that
\(
\nabla_f g_i^\top\delta f
+\nabla_\theta g_i^\top\delta\theta
+\nabla_r g_i^\top\delta r<0
\)
for all
\(i\in I(\OpPoint{f},\OpPoint{\theta},\OpPoint{r})\).
For \glspl{dp}, if
\(
(\OpPoint{f},\OpPoint{\theta},\OpPoint{r})
\in\operatorname{int}(\Func\times\ThetaSpace\times\Res)
\),
upperness then makes the \gls{mfcq} condition
particularly simple.

\begin{proposition}[\Gls{mfcq} from \gls{dp} upperness]
\label{prop:mfcq-dp-upperness}
Let
\(
\DesignProblem
=
\left\{
(f,\theta,r)\in\Func\times\ThetaSpace\times\Res
\;\middle|\;
g(f,\theta,r)\leq0
\right\}
\)
be a finite-constraint \gls{dp}, and let
\(
(\OpPoint{f},\OpPoint{\theta},\OpPoint{r})
\in\operatorname{int}(\Func\times\ThetaSpace\times\Res)
\)
be an operating point.
Suppose that
\(
(\nabla_f g_i,\nabla_r g_i)\neq 0
\)
for every
\(i\in I(\OpPoint{f},\OpPoint{\theta},\OpPoint{r})\).
Then \gls{mfcq} holds at
\((\OpPoint{f},\OpPoint{\theta},\OpPoint{r})\).
\end{proposition}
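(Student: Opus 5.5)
We need a direction \((\delta f,\delta\theta,\delta r)\) in \(\operatorname{int}\Tan_{\Func\times\ThetaSpace\times\Res}(\OpPoint f,\OpPoint\theta,\OpPoint r)\) that strictly decreases every active linearized constraint. Since the operating point lies in the interior of the box, \cref{rem:tangent-cone-box} gives \(\Tan_{\Func\times\ThetaSpace\times\Res}(\OpPoint f,\OpPoint\theta,\OpPoint r)\) equal to the whole ambient space. Its interior is therefore the whole space as well, so the box part of \gls{mfcq} is automatic. It remains to find a direction with \(\nabla_f g_i^\top\delta f+\nabla_\theta g_i^\top\delta\theta+\nabla_r g_i^\top\delta r<0\) for all \(i\in I(\OpPoint f,\OpPoint\theta,\OpPoint r)\).

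The candidate is the ``preferred'' direction \(\delta\theta=0\), \(\delta f=-\mathbf 1\), \(\delta r=\mathbf 1\): request less functionality and allow more resources.

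The key step is a sign lemma derived from upperness. For each active \(i\), I claim \(\nabla_f g_i\geq0\) and \(\nabla_r g_i\leq0\) componentwise. To see why, suppose \(\partial g_i/\partial r_j>0\) for some \(j\). Moving from the operating point along \(+e_j\) in \(r\) stays in the box, since the point is interior. Upperness says this move preserves membership in \(\DesignProblem\), yet to first order \(g_i\) becomes positive, which looks contradictory.

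This is the main obstacle. Upperness constrains the set \(\DesignProblem\), not each individual function \(g_i\). A redundant or degenerate representation could in principle have an active constraint with a ``wrong-sign'' gradient, as long as other constraints cut the set.

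My first attempt is to argue directly with the active constraint at the point. Membership along \((\OpPoint f,\OpPoint\theta,\OpPoint r+te_j)\in\DesignProblem\) for small \(t>0\) gives \(g_i\leq0\) along that ray. With \(g_i=0\) at \(t=0\), this yields \(\partial g_i/\partial r_j\leq0\). This works for every active \(i\), because every point of \(\DesignProblem\) satisfies all constraints, and it needs no uniqueness of representation. The same argument along \(-e_j\) in \(f\) gives \(\partial g_i/\partial f_j\geq0\). So the sign lemma holds, via one-sided derivatives along feasible rays.

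With the signs in hand, the candidate direction gives
\(\nabla_f g_i^\top(-\mathbf 1)+\nabla_r g_i^\top\mathbf 1=-\sum_j|\partial_{f_j}g_i|-\sum_j|\partial_{r_j}g_i|\). This is strictly negative exactly because \((\nabla_f g_i,\nabla_r g_i)\neq0\), which is the stated hypothesis. Hence \gls{mfcq} holds at the operating point.

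I would conclude by noting that a single direction serves all active constraints simultaneously. This is where the order structure makes \gls{mfcq} ``particularly simple.''
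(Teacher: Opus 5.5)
Your proof is correct and follows the paper's argument. Upperness plus interiority gives feasible rays along $-e_j$ in $f$ and $+e_j$ in $r$, which yield the sign conditions $\nabla_f g_i\geq0$ and $\nabla_r g_i\leq0$ for every active $i$, regardless of how redundant the representation is. The direction $(-\mathbf 1,\mathbf 0,\mathbf 1)$ then strictly decreases each active linearization because $(\nabla_f g_i,\nabla_r g_i)\neq0$. Your explicit remark that the box tangent cone is the whole space at an interior point, so that the interiority requirement of MFCQ is automatic, is a step the paper leaves implicit.
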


\begin{proof}
By upperness, given
\(
(\OpPoint{f},\OpPoint{\theta},\OpPoint{r})
\in\operatorname{int}(\Func\times\ThetaSpace\times\Res)
\),
for every \(\delta f\geq0\) and \(\delta r\geq0\),
\(
(\OpPoint{f}-\PerturbationScale\delta f,
 \OpPoint{\theta},
 \OpPoint{r}+\PerturbationScale\delta r)
\in\DesignProblem
\)
for \(\PerturbationScale\geq0\) small enough.
For every
\(i\in I(\OpPoint{f},\OpPoint{\theta},\OpPoint{r})\),
differentiability therefore gives
\(\nabla_f g_i\geq0\) and
\(\nabla_r g_i\leq0\) componentwise.
For
\(
(\delta f,\delta\theta,\delta r)
=
(-\mathbf 1,\mathbf 0,\mathbf 1)
\),
every active inequality satisfies
\(
\nabla_f g_i^\top\delta f
+\nabla_\theta g_i^\top\delta\theta
+\nabla_r g_i^\top\delta r
<0
\),
where strictness follows from
\(
(\nabla_f g_i,\nabla_r g_i)\neq0
\).
Thus \gls{mfcq} holds.
\end{proof}

\begin{remark}[Boundary points]
At boundary points, some box constraints can oppose the monotone direction.
For example, for the choice
\(
(\delta f,\delta\theta,\delta r)
=
(-\mathbf 1,\mathbf 0,\mathbf 1)
\)
used in the proof, the direction \(\delta f_i=-1\) at a lower bound
\(f_i=\underline f_i\) points outside the box, so such constraints must be checked
separately. Active bounds on \(\theta\) must always be checked separately as well, since
\(\delta\theta=0\).
By contrast, this choice of \((\delta f,\delta\theta,\delta r)\) strictly
satisfies the linearized inequalities for active upper bounds on \(f\) and
lower bounds on \(r\). These constraints therefore require no additional
verification.
\end{remark}

We next show how \gls{mfcq} can be used to verify the regularity properties
of \cref{def:protodifferentiability,def:aubin-property,def:semidifferentiability}.

\begin{proposition}[Query regularity under \gls{mfcq}]
\label{prop:query-regularity-mfcq}
Let
\((\OpPoint{x},\OpPoint{\theta},\OpPoint{y})
\in\operatorname{int}(\QueryInputSpace\times\ThetaSpace\times\QueryOutputSpace)\)
be an operating point of a finite-constraint \gls{dp} \(\DesignProblem\).
If \gls{mfcq} holds at
\((\OpPoint{x},\OpPoint{\theta},\OpPoint{y})\),
then \(\GenericQuery\) is protodifferentiable at
\(((\OpPoint{x},\OpPoint{\theta}),\OpPoint{y})\).
If, moreover, \gls{mfcq} can be verified with
\(\delta x=0\) and \(\delta\theta=0\), then
\(\GenericQuery\) also has the Aubin property and is therefore
semidifferentiable.
\end{proposition}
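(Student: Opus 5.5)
The plan is to identify the graph of \(\GenericQuery\) with \(\DesignProblem\) itself, after the regrouping \((x,\theta,y)\leftrightarrow(f,\theta,r)\). Then work near the interior point \(\OpPoint z:=(\OpPoint x,\OpPoint\theta,\OpPoint y)\), where the box constraints are inactive. So locally \(\Graph\GenericQuery=\{z\mid g(z)\le 0\}\), with \(g\) continuously differentiable, and \(\Graph\GenericQuery\) is locally closed. The proof then has three steps: protodifferentiability, the Aubin property, and semidifferentiability.

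\emph{Protodifferentiability.} Under \gls{mfcq}, Abadie's \gls{cq} \eqref{eq:tangent-linearized-feasible-set} holds. Hence \(\Tan_{\DesignProblem}(\OpPoint z)=L_{\DesignProblem}(\OpPoint z)\), which here is the polyhedral cone \(\{\delta z\mid\nabla g_i^\top\delta z\le0,\ i\in I(\OpPoint z)\}\). Fix \(\delta z\) in this cone and an \gls{mfcq} direction \(w\), so that \(\nabla g_i^\top w<0\) for all \(i\in I(\OpPoint z)\).
\begin{itemize}
\item For each \(t>0\), the direction \(\delta z+tw\) satisfies every active linearized inequality strictly.
\item A first-order expansion of the active \(g_i\), continuity of the inactive \(g_i\), and interiority in the box then give \(h_t>0\) such that \(\OpPoint z+h(\delta z+tw)\in\DesignProblem\) for all \(h\in(0,h_t]\).
\item Choose \(t_n=1/n\) with a decreasing sequence \(h_{t_n}\downarrow0\). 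Define \(t(h):=1/n\) for \(h\in(h_{t_{n+1}},h_{t_n}]\), and set \(\xi(h):=\OpPoint z+h(\delta z+t(h)w)\) with \(\xi(0)=\OpPoint z\).
\end{itemize}
Then \(\xi\) maps into \(\Graph\GenericQuery\) and \((\xi(h)-\OpPoint z)/h\to\delta z\). This verifies \cref{def:protodifferentiability}. Equivalently, it shows the tangent cone is derivable, in the spirit of Rockafellar--Wets, Theorem~6.31.

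\emph{Aubin property.} Suppose \gls{mfcq} holds with a direction \(w=(0,0,\delta y)\). Then \(\nabla_y g_i^\top\delta y<0\) for all active \(i\). By Gordan's alternative, this means there is no nonzero \(\lambda\ge0\), supported on \(I(\OpPoint z)\), with \(\sum_i\lambda_i\nabla_y g_i=0\).

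Under \gls{mfcq}, the normal cone to \(\Graph\GenericQuery\) at \(\OpPoint z\) is \(\{\sum_{i\in I}\lambda_i\nabla g_i\mid\lambda\ge0\}\) (Rockafellar--Wets, Theorem~6.14). The same representation holds at nearby graph points, because \gls{mfcq} is stable under perturbation and the active set can only shrink. Hence the coderivative satisfies \(D^*\GenericQuery(\OpPoint x,\OpPoint\theta\mid\OpPoint y)(0)=\{\sum\lambda_i\nabla_{(x,\theta)}g_i\mid \sum\lambda_i\nabla_y g_i=0\}=\{0\}\). By the Mordukhovich criterion (Rockafellar--Wets, Theorem~9.40), \(\GenericQuery\) has the Aubin property around \(((\OpPoint x,\OpPoint\theta),\OpPoint y)\). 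This is precisely Robinson's stability result for parametric inequality systems (cf. Example~9.44 there).

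\emph{Semidifferentiability.} Combine protodifferentiability with the Aubin property and invoke Rockafellar--Wets, Proposition~9.50. The domain condition \(\operatorname{dom}D\GenericQuery=\mathbb R^{n_x+n_\theta}\) also follows from the Aubin property.

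I expect the main obstacle to be the Aubin step. It requires the normal-cone representation by active gradients, and that representation must hold uniformly near \(\OpPoint z\) so the limiting normal cone is correctly described. It also requires translating the \gls{mfcq} restricted to \(y\)-directions into the dual condition by Gordan's theorem, making sure the inactive box constraints contribute nothing.
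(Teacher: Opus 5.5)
Your proposal is correct and follows the same three-step route as the paper: MFCQ gives protodifferentiability, MFCQ verified with a direction $(0,0,\delta y)$ gives the Aubin property, and Rockafellar--Wets Proposition~9.50 combines the two into semidifferentiability. The difference is that you prove the first two steps yourself, where the paper only cites Rockafellar--Wets Theorem~6.31/Corollary~6.30 and Rockafellar's Theorem~5.4/Example~5.5: you build the diagonal path $\xi(h)=\OpPoint{z}+h(\delta z+t(h)w)$, and you apply the Mordukhovich criterion to the active-gradient normal cone via Gordan's alternative; the uniformity near $\OpPoint{z}$ that you flag as the main obstacle is already built into Rockafellar--Wets Theorem~6.14, which gives the limiting normal cone at the point itself under MFCQ.
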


\begin{proof}
Under \gls{mfcq},
Rockafellar and Wets~\cite{rockafellar1998variational}
show in Theorem~6.31 and Corollary~6.30 that every tangent direction to
\(\Graph(\GenericQuery)\)
is realizable by a feasible first-order path,
proving protodifferentiability.
If \gls{mfcq} can be verified with \((0,0,\delta y)\),
Rockafellar~\cite{rockafellar1989protodifferentiability}
also shows in Theorem~5.4 and Example~5.5 that the Aubin property holds;
Rockafellar and Wets~\cite{rockafellar1998variational} then show in
Proposition~9.50 that semidifferentiability follows.
\end{proof}

\subsection{Compositional verification of \glspl{cq}}
\label{subsec:cq-compositional}

\cref{def:composition-operations} shows how \glspl{dp} can be
composed. Here, we reverse the perspective: we use intersection to decompose
a single \gls{dp} into simpler components and exploit this representation to
verify \glspl{cq} compositionally.

\begin{proposition}[Compositional verification of Abadie's \gls{cq}]
\label{prop:compositional-cq}

Let
\(
\DesignProblem
\subseteq
\Func\times\ThetaSpace\times\Res
\)
be a finite-constraint \gls{dp}, and fix
\(
(\OpPoint{f},\OpPoint{\theta},\OpPoint{r})\in\DesignProblem
\).
Suppose that
\(
\DesignProblem=\bigcap_{i=1}^N\DesignProblem_i
\),
where each
\(
\DesignProblem_i
\subseteq
\Func\times\ThetaSpace\times\Res
\)
is itself a finite-constraint \gls{dp} on the same box, defined by a subset of
the inequality constraints defining \(\DesignProblem\), and every constraint
defining \(\DesignProblem\) appears in at least one
\(\DesignProblem_i\).

Suppose that the intersection rule of
\cref{thm:composition-rules-system-sensitivity} holds at this operating
point and that
\(
\Tan_{\DesignProblem_i}
(\OpPoint{f},\OpPoint{\theta},\OpPoint{r})
=
L_{\DesignProblem_i}
(\OpPoint{f},\OpPoint{\theta},\OpPoint{r})
\)
for every \(i\).
Then
\(
\Tan_{\DesignProblem}
(\OpPoint{f},\OpPoint{\theta},\OpPoint{r})
=
L_{\DesignProblem}
(\OpPoint{f},\OpPoint{\theta},\OpPoint{r})
\).

\end{proposition}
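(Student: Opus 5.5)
The plan is to prove the two inclusions between \(\Tan_{\DesignProblem}(\OpPoint{f},\OpPoint{\theta},\OpPoint{r})\) and \(L_{\DesignProblem}(\OpPoint{f},\OpPoint{\theta},\OpPoint{r})\) separately. The inclusion \(\Tan_{\DesignProblem}\subseteq L_{\DesignProblem}\) was already noted as holding in general for finite-constraint \glspl{dp}, so the substance is the reverse inclusion. For that I will write \(L_{\DesignProblem}\) as an intersection of the component linearizations, then pass through the hypotheses \(\Tan_{\DesignProblem_i}=L_{\DesignProblem_i}\) and the intersection rule.

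First I will show that
\[
L_{\DesignProblem}(\OpPoint{f},\OpPoint{\theta},\OpPoint{r})
=
\bigcap_{i=1}^N L_{\DesignProblem_i}(\OpPoint{f},\OpPoint{\theta},\OpPoint{r}).
\]
This should follow from the definitions alone. Each \(L_{\DesignProblem_i}\) consists of the variations in \(\Tan_{\Func\times\ThetaSpace\times\Res}(\OpPoint{f},\OpPoint{\theta},\OpPoint{r})\) that satisfy the linearized inequalities of the constraints of \(\DesignProblem_i\) active at the operating point. A constraint of \(\DesignProblem_i\) is active for \(\DesignProblem_i\) exactly when it is active for \(\DesignProblem\), since activity depends only on the value \(g_j(\OpPoint{f},\OpPoint{\theta},\OpPoint{r})\). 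Every constraint of \(\DesignProblem\) appears in some \(\DesignProblem_i\), and all components share the same box, so the tangent cone to the box appears identically in each set. Hence the conjunction over \(i\) of the component linearized systems is exactly the linearized system of \(\DesignProblem\). I also need the operating point to lie in each \(\DesignProblem_i\); this holds because \(\DesignProblem\subseteq\DesignProblem_i\), so each cone is well defined.

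Second, I will combine the hypotheses to obtain
\[
L_{\DesignProblem}
=\bigcap_{i=1}^N L_{\DesignProblem_i}
=\bigcap_{i=1}^N \Tan_{\DesignProblem_i}
=\Tan_{\bigcap_i\DesignProblem_i}
=\Tan_{\DesignProblem}.
\]
The third equality is where the intersection rule of \cref{thm:composition-rules-system-sensitivity} enters. That rule is stated for TDPs, i.e.\ \((\DesignProblem_1\wedge\DesignProblem_2)^T=\DesignProblem_1^T\TIntersection\DesignProblem_2^T\), and for two factors. I interpret the hypothesis ``the intersection rule holds'' as the tangent-cone equality \(\Tan_{\DesignProblem_1\cap\DesignProblem_2}=\Tan_{\DesignProblem_1}\cap\Tan_{\DesignProblem_2}\), which its proof actually establishes. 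At an interior operating point the \(+K\) completion is invisible, so the two readings coincide there. In general the tangent-cone form can be recovered via \cref{lem:ordering-cone-composition-commutation}.

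The main obstacle is making this rule apply to \(N\) factors. I will read the hypothesis as holding for the iterated binary intersection \(\DesignProblem_1\wedge(\DesignProblem_2\wedge(\cdots\wedge\DesignProblem_N))\), and argue by induction on \(N\): at each stage, apply the binary rule to \(\DesignProblem_1\) and the partial intersection, which is again a finite-constraint \gls{dp} on the same box. If the hypothesis is instead meant only in the weaker TDP form at boundary points, I would fall back on the general inclusion \(\Tan_{\DesignProblem}\subseteq L_{\DesignProblem}\) together with Lemma~\ref{lem:relative-upperness-tangent} to strip the \(K\) term from vectors already tangent to the box. Every element of \(L_{\DesignProblem}\) is tangent to the box by definition, so that step should go through.
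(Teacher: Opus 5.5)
Your proposal is correct and follows essentially the same route as the paper: the chain \(\Tan_{\DesignProblem}=\bigcap_{i}\Tan_{\DesignProblem_i}=\bigcap_{i}L_{\DesignProblem_i}=L_{\DesignProblem}\), justified respectively by the intersection rule, the componentwise constraint qualifications, and the fact that the active constraints of \(\DesignProblem\) are exactly those of the \(\DesignProblem_i\) on a common box. Your additional care makes explicit what the paper's one-line argument uses implicitly: reading the rule in tangent-cone form, iterating it over \(N\) factors, and removing the \(K\) term from box-tangent vectors via \cref{lem:relative-upperness-tangent}.
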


\begin{proof}
At the fixed operating point
\((\OpPoint{f},\OpPoint{\theta},\OpPoint{r})\),
\(
\Tan_{\DesignProblem}
=
\bigcap_{i=1}^N \Tan_{\DesignProblem_i}
=
\bigcap_{i=1}^N L_{\DesignProblem_i}
=
L_{\DesignProblem}.
\)
The equalities follow, respectively, from the intersection rule of
\cref{thm:composition-rules-system-sensitivity}, the assumed
\glspl{cq} for the \(\DesignProblem_i\), and the fact that the constraints of \(\DesignProblem\) are exactly those of
the \(\DesignProblem_i\).
\end{proof}

Thus, the \glspl{cq} may be verified separately for each
\(\DesignProblem_i\), or for each constraint when every constraint defines a
\gls{dp} on the common box.

\begin{example}
Suppose that \(\DesignProblem_i\) is defined by a single active inequality and
that
\(
(\OpPoint{f},\OpPoint{\theta},\OpPoint{r})
\in
\operatorname{int}(\Func\times\ThetaSpace\times\Res)
\),
so that
\(
g_i(\OpPoint{f},\OpPoint{\theta},\OpPoint{r})=0
\).
Quasinormality, which implies Abadie's
\gls{cq}~\cite{solodov2011constraintqualifications}, holds automatically if
\(\nabla g_i(\OpPoint{f},\OpPoint{\theta},\OpPoint{r})\neq0\).
If
\(\nabla g_i(\OpPoint{f},\OpPoint{\theta},\OpPoint{r})=0\),
it reduces to requiring
\(
g_i(f,\theta,r)\leq0
\)
in a neighborhood of the operating point, i.e., the operating point is a
local maximizer of \(g_i\).
\end{example}

%% file: sections/appendix_beyond_convexity.tex
\section{Beyond convexity}
\label{app:beyond-convexity}

This appendix extends the composition results of
\cref{thm:composition-rules-system-sensitivity}
beyond the convex setting.
We consider series and feedback composition separately.

\subsection{Series composition}
\label{subsec:beyond-convexity-series-composition}

For series composition, we use the graphical-derivative calculus of
Li, Meng, and Penot~\cite{li2009calculus}.
Our setting is less general than theirs, since we work in Euclidean spaces
and with bounded set-valued maps.

We recall the notion of directional semi-compactness,
defined by Li, Meng, and Penot~\cite{li2009calculus}
in Definition~2.
A set-valued map \(F:X\rightrightarrows Y\) is directionally semi-compact at
\(\OpPoint x\in\Closure{\operatorname{dom}F}\) if, along every first-order
approach \(\OpPoint x+\PerturbationScale_k\delta x_k\), with \(\PerturbationScale_k\downarrow0\),
\(\delta x_k\to \delta x\in\Tan_{\operatorname{dom}F}(\OpPoint x)\), and
\(\OpPoint x+\PerturbationScale_k\delta x_k\in\operatorname{dom}F\), one can select
\(y_k\in F(\OpPoint x+\PerturbationScale_k\delta x_k)\) so that a subsequence of \(y_k\) converges.
In our setting, \gls{dp} queries satisfy this condition automatically,
since their values lie in bounded Euclidean boxes.

\begin{proposition}[Reverse inclusion for series composition
{\cite[Propositions~15(b) and~19]{li2009calculus}}]
\label{prop:series-composition-reverse-inclusion-nonconvex}

Let
\(\DesignProblem_1\subseteq
\QueryInputSpace\times\ThetaSpace_1\times\IntermediateSpace\)
and
\(\DesignProblem_2\subseteq
\IntermediateSpace\times\ThetaSpace_2\times\QueryOutputSpace\)
be \glspl{dp} connected in series.
Let
\(\GenericQuery_1:
\QueryInputAmbientSpace\times\QueryParameterAmbientSpace_1
\rightrightarrows\QueryIntermediateAmbientSpace\)
and
\(\GenericQuery_2:
\QueryIntermediateAmbientSpace\times\QueryParameterAmbientSpace_2
\rightrightarrows\QueryOutputAmbientSpace\)
be the corresponding query maps.

Following Li, Meng, and Penot~\cite{li2009calculus}, define
\(\Xi:
\QueryInputAmbientSpace\times\QueryParameterAmbientSpace_1
\times\QueryParameterAmbientSpace_2
\times\QueryOutputAmbientSpace
\rightrightarrows\QueryIntermediateAmbientSpace\) by
\(
\Xi(x,\theta_1,\theta_2,y)
:=
\GenericQuery_1(x,\theta_1)
\cap
\bigl(\GenericQuery_2(\cdot,\theta_2)\bigr)^{-1}(y).
\)
By boundedness of \(\IntermediateSpace\), \(\Xi\) is directionally
semi-compact.

Fix
\(((\OpPoint x,\OpPoint{\theta}_1,\OpPoint{\theta}_2),\OpPoint y)
\in\Graph(\GenericQuery_2\circ
(\GenericQuery_1\times\operatorname{Id}_{\mathbb R^{n_{\theta_2}}}))\).
If, for every
\(\OpPoint q\in
\Xi(\OpPoint x,\OpPoint{\theta}_1,\OpPoint{\theta}_2,\OpPoint y)\),
\(
D\Xi
\left(
(\OpPoint x,\OpPoint{\theta}_1,\OpPoint{\theta}_2,\OpPoint y),
\OpPoint q
\right)
(0,0,0,0)
=
\{0\},
\)
then
\[
\begin{aligned}
&D(\GenericQuery_2\circ
(\GenericQuery_1\times\operatorname{Id}_{\mathbb R^{n_{\theta_2}}}))
\left(
    (\OpPoint x,\OpPoint{\theta}_1,\OpPoint{\theta}_2),
    \OpPoint y
\right)
\subseteq
\bigcup_{
    \OpPoint q\in
    \Xi(
        \OpPoint x,
        \OpPoint{\theta}_1,
        \OpPoint{\theta}_2,
        \OpPoint y
    )
}
\\
&\qquad
D\GenericQuery_2
\left(
    (\OpPoint q,\OpPoint{\theta}_2),
    \OpPoint y
\right)
\circ
\left(
    D\GenericQuery_1
    \left(
        (\OpPoint x,\OpPoint{\theta}_1),
        \OpPoint q
    \right)
    \times
    \operatorname{Id}_{\mathbb R^{n_{\theta_2}}}
\right).
\end{aligned}
\]
\end{proposition}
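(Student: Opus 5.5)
The inclusion follows from the definition of the graphical derivative together with a compactness argument. The argument is essentially that of Li, Meng, and Penot, specialized to our bounded Euclidean setting.

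Write \(G:=\GenericQuery_2\circ(\GenericQuery_1\times\operatorname{Id})\) and \(p:=(x,\theta_1,\theta_2)\). Take \(\delta y\in DG((\OpPoint p,\OpPoint y))(\delta p)\). By \cref{def:graphical-derivative} there exist \(\PerturbationScale_k\downarrow0\) and \((\delta p_k,\delta y_k)\to(\delta p,\delta y)\) with \(\OpPoint y+\PerturbationScale_k\delta y_k\in G(\OpPoint p+\PerturbationScale_k\delta p_k)\). By definition of series composition, we can then pick \(q_k\in\Xi(\OpPoint p+\PerturbationScale_k\delta p_k,\OpPoint y+\PerturbationScale_k\delta y_k)\). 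Since \(q_k\) lies in the bounded box \(\IntermediateSpace\) (directional semi-compactness), a subsequence converges to some \(\OpPoint q\). Because the \glspl{dp} are closed, \(\Graph\Xi\) is closed, so \(\OpPoint q\in\Xi(\OpPoint p,\OpPoint y)\).

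Next, set \(t_k:=\|q_k-\OpPoint q\|/\PerturbationScale_k\) and split into two cases.

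\emph{Case 1: \(t_k\) is bounded.} Pass to a subsequence along which \(\delta q_k:=(q_k-\OpPoint q)/\PerturbationScale_k\to\delta q\). Membership \(q_k\in\GenericQuery_1(\OpPoint x+\PerturbationScale_k\delta x_k,\OpPoint\theta_1+\PerturbationScale_k\delta\theta_{1,k})\) gives \(\delta q\in D\GenericQuery_1((\OpPoint x,\OpPoint\theta_1),\OpPoint q)(\delta x,\delta\theta_1)\). Likewise, \(\OpPoint y+\PerturbationScale_k\delta y_k\in\GenericQuery_2(q_k,\OpPoint\theta_2+\PerturbationScale_k\delta\theta_{2,k})\) gives \(\delta y\in D\GenericQuery_2((\OpPoint q,\OpPoint\theta_2),\OpPoint y)(\delta q,\delta\theta_2)\). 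Hence \(\delta y\) lies in the composite on the right-hand side for this \(\OpPoint q\).

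\emph{Case 2: \(t_k\to\infty\) along a subsequence.} This is the step where the hypothesis \(D\Xi(\cdot,\OpPoint q)(0)=\{0\}\) is used, and it is the main obstacle. Rescale by \(s_k:=\|q_k-\OpPoint q\|\) instead of \(\PerturbationScale_k\); note \(s_k\to0\) because \(q_k\to\OpPoint q\). Then \((q_k-\OpPoint q)/s_k\) has unit norm, so a further subsequence converges to some \(u\) with \(\|u\|=1\). The input increments satisfy \(\PerturbationScale_k(\delta p_k,\delta y_k)/s_k=(\delta p_k,\delta y_k)/t_k\to0\). Since \((\OpPoint p+\PerturbationScale_k\delta p_k,\OpPoint y+\PerturbationScale_k\delta y_k,q_k)\in\Graph\Xi\), the tangent-cone definition gives \(u\in D\Xi((\OpPoint p,\OpPoint y),\OpPoint q)(0,0,0,0)\). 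This contradicts the hypothesis, so Case 2 cannot occur.

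Because the hypothesis is imposed at every \(\OpPoint q\in\Xi(\OpPoint p,\OpPoint y)\), it applies to whatever cluster point arises above, so only Case 1 remains. Taking the union over \(\OpPoint q\) yields the stated inclusion.

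The main points to check carefully are, first, the closedness of \(\Graph\Xi\), which follows from the closedness of \(\DesignProblem_1\) and \(\DesignProblem_2\), and second, that the rescaled sequences in Case 2 genuinely witness a tangent vector to \(\Graph\Xi\), i.e., that the scales \(s_k\) tend to zero.
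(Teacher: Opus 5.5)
Your argument is correct and is essentially the proof of Proposition~15(b) of Li, Meng, and Penot. The paper does not reproduce that proof and only invokes it by citation. In your version, boundedness of the intermediate box supplies the directional semi-compactness of \(\Xi\), and closedness of \(\DesignProblem_1,\DesignProblem_2\) makes \(\Graph\Xi\) closed, so the cluster point \(\OpPoint q\) lies in \(\Xi(\OpPoint x,\OpPoint{\theta}_1,\OpPoint{\theta}_2,\OpPoint y)\). The hypothesis \(D\Xi(\cdot)(0,0,0,0)=\{0\}\) then excludes \(\|q_k-\OpPoint q\|/\PerturbationScale_k\to\infty\) through your rescaling, and your two cases are exhaustive, since an unbounded nonnegative sequence has a subsequence diverging to infinity.
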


Li, Meng, and Penot~\cite{li2009calculus} show in Proposition~19 that a sufficient condition for
\(D\Xi((\OpPoint x,\OpPoint{\theta}_1,\OpPoint{\theta}_2,\OpPoint y),\OpPoint q)(0,0,0,0)=\{0\}\)
is
\(D\GenericQuery_1((\OpPoint x,\OpPoint{\theta}_1),\OpPoint q)(0,0)
\cap
\{\delta q \mid
0 \in D\GenericQuery_2((\OpPoint q,\OpPoint{\theta}_2),\OpPoint y)(\delta q,0)\}
=
\{0\}\).
In the finite-constraint setting of
\cref{app:finite-constraint-design-problems}, assume that
\eqref{eq:tangent-linearized-feasible-set} holds for both \glspl{dp}. The
condition above can equivalently be verified, for each relevant \(\OpPoint q\), by
checking that the following linear feasibility problem has only the solution
\(\delta q=0\):
\[
\begin{array}{ll}
\text{find} & \delta q
\\[0.2em]
\text{subject to}
&
\nabla_q g_{1,i}^{\top}
(\OpPoint x,\OpPoint{\theta}_1,\OpPoint q)\,\delta q\leq0,
\qquad
i\in I_1(\OpPoint x,\OpPoint{\theta}_1,\OpPoint q),
\\
&
\nabla_q g_{2,i}^{\top}
(\OpPoint q,\OpPoint{\theta}_2,\OpPoint y)\,\delta q\leq0,
\qquad
i\in I_2(\OpPoint q,\OpPoint{\theta}_2,\OpPoint y),
\\
&
\delta q\in
\Tan_{\IntermediateSpace}(\OpPoint q).
\end{array}
\]

\subsection{Feedback composition}
\label{subsec:beyond-convexity-feedback-composition}

For feedback composition, we follow the notation of
\cref{subsec:feedback-composition}, where
\(\Trace(\DesignProblem)=\pi(\DesignProblem\cap M)\).
We use the image-set calculus of Benko~\cite{benko2021innercalmness} to treat the
projection in the nonconvex setting.
We recall the notion of inner calmness,
defined by Benko~\cite{benko2021innercalmness}
in Definition~2.2.
A set-valued map \(F:X\rightrightarrows Y\) is inner calm at
\((\OpPoint x,\OpPoint y)\in\Graph F\) relative to
\(\operatorname{dom}F\) if there exists \(\kappa>0\) such that, for every
\(x_k\to\OpPoint x\) with \(x_k\in\operatorname{dom}F\), one can select
\(y_k\in F(x_k)\) satisfying
\(\|y_k-\OpPoint y\|\leq\kappa\|x_k-\OpPoint x\|\)
for all sufficiently large \(k\).

\begin{proposition}[Feedback composition beyond convexity]
\label{prop:feedback-composition-nonconvex}

Let
\(\DesignProblem
\subseteq
(\Func\times\IntermediateSpace)
\times\ThetaSpace
\times(\Res\times\IntermediateSpace)\)
be a finite-constraint \gls{dp}, and let
\((\OpPoint f,\OpPoint\theta,\OpPoint r)
\in\Trace(\DesignProblem)\).
Assume that, for every
\(\OpPoint q\) such that
\(((\OpPoint f,\OpPoint q),\OpPoint\theta,
(\OpPoint r,\OpPoint q))\in\DesignProblem\cap M\),
\glspl{cq} hold, ensuring that
\(\Tan_{\DesignProblem}=L_{\DesignProblem}\) and
\(\Tan_{\DesignProblem\cap M}=L_{\DesignProblem\cap M}\).
Let
\(
\Psi:
\FuncAmbientSpace\times\ThetaAmbientSpace\times\ResAmbientSpace
\rightrightarrows
(\FuncAmbientSpace\times\QueryIntermediateAmbientSpace)
\times\ThetaAmbientSpace
\times(\ResAmbientSpace\times\QueryIntermediateAmbientSpace)
\)
be defined by
\(
\Psi(f,\theta,r)
:=
\pi^{-1}(f,\theta,r)\cap(\DesignProblem\cap M).
\)
Assume that there exists one such \(\OpPoint q\) for which \(\Psi\) is inner
calm at
\(
\left(
(\OpPoint f,\OpPoint\theta,\OpPoint r),
((\OpPoint f,\OpPoint q),\OpPoint\theta,
(\OpPoint r,\OpPoint q))
\right)
\)
relative to \(\operatorname{dom}\Psi\).
Then
\(
(\Trace(\DesignProblem))^T_
{(\OpPoint f,\OpPoint\theta,\OpPoint r)}
=
\left(
\TTrace(\DesignProblem^T)
\right)_
{(\OpPoint f,\OpPoint\theta,\OpPoint r)}
\).

\end{proposition}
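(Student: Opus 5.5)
The plan is to write $\Trace(\DesignProblem)=\pi(\DesignProblem\cap M)$, as in \cref{subsec:feedback-composition}. The claim then splits into two parts: a tangent-cone identity for the intersection $\DesignProblem\cap M$, and an image-set identity for the linear projection $\pi$. By \cref{lem:ordering-cone-composition-commutation}, both sides are obtained from the corresponding tangent-cone constructions by adding $K_{\CDPGraph}$. It therefore suffices to prove
\[
\Tan_{\Trace(\DesignProblem)}(\OpPoint f,\OpPoint\theta,\OpPoint r)
=
\Closure{\bigcup_{\OpPoint q}\Trace\bigl(\Tan_{\DesignProblem}(z_{\OpPoint q})\bigr)},
\]
where $z_{\OpPoint q}:=((\OpPoint f,\OpPoint q),\OpPoint\theta,(\OpPoint r,\OpPoint q))$ and the union runs over all compatible internal realizations $\OpPoint q$.

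\emph{Step 1 (intersection with $M$).} For each compatible $\OpPoint q$, I would use the assumed constraint qualifications, $\Tan_{\DesignProblem\cap M}=L_{\DesignProblem\cap M}$ and $\Tan_{\DesignProblem}=L_{\DesignProblem}$. The constraint system of $\DesignProblem\cap M$ consists of the constraints of $\DesignProblem$ together with the linear equalities $q_1=q_2$. Hence $L_{\DesignProblem\cap M}=L_{\DesignProblem}\cap\{\delta q_1=\delta q_2\}=\Tan_{\DesignProblem}\cap\Tan_M$. The box constraints contribute the same factor $\Tan_{\Func\times\IntermediateSpace\times\ThetaSpace\times\Res\times\IntermediateSpace}$ on both sides. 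Applying $\pi$ yields $\pi(\Tan_{\DesignProblem\cap M}(z_{\OpPoint q}))=\Trace(\Tan_{\DesignProblem}(z_{\OpPoint q}))$, which is exactly the system-level relation $\DesignProblem_{\TCDPGraph_\rho}$ before $+K$ for the realization $\rho$ determined by $\OpPoint q$.

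\emph{Step 2 (forward inclusion).} For every compatible $\OpPoint q$, I expect $\pi(\Tan_{\DesignProblem\cap M}(z_{\OpPoint q}))\subseteq\Tan_{\pi(\DesignProblem\cap M)}(\OpPoint f,\OpPoint\theta,\OpPoint r)$ to follow directly from the definition. If $z_{\OpPoint q}+h_k\delta z_k\in\DesignProblem\cap M$, then linearity of $\pi$ gives $(\OpPoint f,\OpPoint\theta,\OpPoint r)+h_k\pi(\delta z_k)\in\Trace(\DesignProblem)$ with $\pi(\delta z_k)\to\pi(\delta z)$. Since tangent cones are closed, taking the union over $\OpPoint q$ and then the closure preserves the inclusion.

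\emph{Step 3 (reverse inclusion; main obstacle).} Here I would invoke Benko's image-set calculus~\cite{benko2021innercalmness}. If $\Psi=\pi^{-1}(\cdot)\cap(\DesignProblem\cap M)$ is inner calm at $((\OpPoint f,\OpPoint\theta,\OpPoint r),z_{\OpPoint q})$ relative to $\operatorname{dom}\Psi$, then $\Tan_{\pi(\DesignProblem\cap M)}(\OpPoint f,\OpPoint\theta,\OpPoint r)\subseteq\pi(\Tan_{\DesignProblem\cap M}(z_{\OpPoint q}))$. To check this directly, take $\delta w\in\Tan_{\Trace(\DesignProblem)}$, witnessed by $w_k=(\OpPoint f,\OpPoint\theta,\OpPoint r)+h_k\delta w_k\in\Trace(\DesignProblem)=\operatorname{dom}\Psi$. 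Inner calmness gives $z_k\in\Psi(w_k)$ with $\|z_k-z_{\OpPoint q}\|\le\kappa h_k\|\delta w_k\|$. Then $(z_k-z_{\OpPoint q})/h_k$ is bounded, so a subsequence converges to some $\delta z\in\Tan_{\DesignProblem\cap M}(z_{\OpPoint q})$ with $\pi(\delta z)=\delta w$. The delicate points are that $\operatorname{dom}\Psi$ coincides with $\Trace(\DesignProblem)$, and that inner calmness is needed only at this single $\OpPoint q$. Together these give $\Tan_{\Trace(\DesignProblem)}\subseteq\Trace(\Tan_{\DesignProblem}(z_{\OpPoint q}))$, which is contained in the closed union.

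\emph{Step 4 (conclusion).} Combining Steps 1–3 gives the displayed tangent-cone equality, with the union sandwiched between the two sides. Adding $K_{\CDPGraph}$ and applying \cref{lem:ordering-cone-composition-commutation} (including its treatment of the closure over realizations) gives $(\Trace(\DesignProblem))^T_{(\OpPoint f,\OpPoint\theta,\OpPoint r)}=(\TTrace(\DesignProblem^T))_{(\OpPoint f,\OpPoint\theta,\OpPoint r)}$.
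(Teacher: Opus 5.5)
Your proposal is correct and follows essentially the same route as the paper's proof. The constraint qualifications give $\Tan_{\DesignProblem\cap M}=\Tan_{\DesignProblem}\cap\Tan_M$ at every compatible $\OpPoint q$, and projecting places each realization's composed cone inside $\Tan_{\Trace(\DesignProblem)}$. Inner calmness at the single distinguished $\OpPoint q$ gives the reverse inclusion, so the union is already closed and \cref{lem:ordering-cone-composition-commutation} transfers the equality to the \glspl{tdp}. Your one addition is spelling out the $O(\PerturbationScale_k)$ lift via bounded difference quotients, where the paper cites Benko's Theorem~4.1 with a one-line justification.
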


\begin{proof}
For every compatible \(\OpPoint q\), at
\(
((\OpPoint f,\OpPoint q),\OpPoint\theta,
(\OpPoint r,\OpPoint q))
\in\DesignProblem\cap M
\),
since \(M\) is a linear subspace, \(\Tan_M=L_M\). Therefore,
\(
\Tan_{\DesignProblem\cap M}
=
L_{\DesignProblem\cap M}
=
L_{\DesignProblem}\cap L_M
=
\Tan_{\DesignProblem}\cap\Tan_M
\).
For the compatible \(\OpPoint q\) at which \(\Psi\) is inner calm,
Benko~\cite{benko2021innercalmness} shows in Theorem~4.1,
applied to \(\Psi\) and the projection \(\pi\), that
\(
\Tan_{\Trace(\DesignProblem)}
=\pi\bigl(\Tan_{\DesignProblem\cap M}\bigr)
=\pi\bigl(\Tan_{\DesignProblem}\cap\Tan_M\bigr)
\), since inner calmness provides an \(O(\PerturbationScale_k)\) lift of every
tangent sequence in \(\operatorname{dom}\Psi=\Trace(\DesignProblem)\).
By \cref{lem:ordering-cone-composition-commutation}, this tangent-cone
equality is preserved by the ordering-cone completion defining the corresponding
\glspl{tdp}.
For every compatible \(\OpPoint q\),
\(
\pi\bigl(\Tan_{\DesignProblem}\cap\Tan_M\bigr)
\subseteq
\Tan_{\Trace(\DesignProblem)}
(\OpPoint f,\OpPoint\theta,\OpPoint r),
\)
while for the \(\OpPoint q\) satisfying the assumptions above,
the preceding projection equality applies.
Hence the union already equals the closed tangent cone, so the closure in
\cref{def:composition-tangential-design-problems},
equation~\eqref{eq:tdp-feedback-composition}, adds nothing and
\(
(\Trace(\DesignProblem))^T_
{(\OpPoint f,\OpPoint\theta,\OpPoint r)}
=
\left(
\TTrace(\DesignProblem^T)
\right)_
{(\OpPoint f,\OpPoint\theta,\OpPoint r)}
\).
\end{proof}